\newtheorem{theorem}{Theorem}
\numberwithin{theorem}{section}
\newtheorem{corollary}[theorem]{Corollary}
\newtheorem{lemma}[theorem]{Lemma}
\newtheorem{proposition}[theorem]{Proposition}
\theoremstyle{definition}
\newtheorem{definition}[theorem]{Definition}
\newtheorem{remark}[theorem]{Remark}
\newtheorem{exercise}[theorem]{Exercise}
\newcommand{\gap}{\mathrel{\trianglelefteq}}
\newcommand{\tl}{\mathrel{\vartriangleleft}}
\newcommand{\supp}{\operatorname{supp}}
\newcommand{\rk}{\operatorname{rk}}
\newcommand{\cl}{\operatorname{Cl}}
\newcommand{\lf}{\operatorname{LF}}
\newcommand{\pica}{\Pi^1_1\text{-}\mathsf{CA}_0}
\newcommand{\aca}{\mathsf{ACA}}
\newcommand{\rca}{\mathsf{RCA}}
\newcommand{\id}{\mathsf{ID}}
\newcommand{\bho}{\vartheta(\varepsilon_{\Omega+1})}
\newcommand{\lpax}{\mathcal L_{\mathsf{PA}}^X}
\newcommand{\lpa}{\mathcal L_{\mathsf{PA}}}
\newcommand{\lid}{\mathcal L_{\mathsf{ID}}}
\newcommand{\lido}{\mathcal L_{\mathsf{ID}}^\Omega}
\newcommand{\bh}{\vartheta(\varepsilon_{\Omega+1})}
\newcommand{\rng}{\operatorname{rng}}
\title[A Second Course on Ordinal Analysis]{Impredicativity and Trees with Gap Condition:\\ A Second Course on Ordinal Analysis}
\author{Anton Freund}
\address{Anton Freund, Department of Mathematics, Technical University of Darmstadt, Schloss\-garten\-str.~7, 64289~Darmstadt, Germany}
\email{freund@mathematik.tu-darmstadt.de}
\begin{document}

\begin{abstract}
These lecture notes introduce central notions of impredicative ordinal analysis, such as the Bachmann-Howard ordinal and the method of collapsing, which transforms uncountable proof trees into countable ones. Specifically, we analyze parameter-free $\Pi^1_1$-comprehension and show that it cannot prove the extended Kruskal theorem due to Harvey Friedman (not even for two labels). In terms of prerequisites, we build on a previous lecture on the ordinal analysis of Peano arithmetic. The present material is intended for 12~lectures and 6~exercise sessions of 90~minutes each. 
\end{abstract}

\keywords{Lecture notes, ordinal analysis, impredicativity, $\Pi^1_1$-comprehension, Friedman's gap condition, Bachmann-Howard ordinal}
\subjclass[2020]{03-01, 03B30, 03F05, 03F15, 03F35, 03F40}

\maketitle

\section{Introduction}

Ordinal analysis measures the strength of mathematical theorems and axiom systems by ordinal numbers, which can be represented by computable well orders. It allows to prove metamathematical results about independence, conservativity and the complexity of provably total algorithms. In a previous lecture course~\cite{first-course}, we have presented the classical ordinal analysis of Peano arithmetic, which is originally due to Gerhard Gentzen~\cite{gentzen36,gentzen43}. We have derived that conservative extensions of Peano arithmetic cannot prove Kruskal's theorem for binary trees. This yields a mathematical example for the incompleteness phenomenon from G\"odel's theorems. Related but much stronger examples were given by Harvey Friedman (see~\cite{simpson85}).

In the present lecture, we discuss the ordinal analysis of an axiom system that is considerably stronger than Peano arithmetic: the theory $\pica^-$ of \mbox{parameter-free} $\Pi^1_1$-comprehension. To explain the latter, we recall the language $\mathcal L_2$ of second order arithmetic, as presented in Section~6 of the previous lecture~\cite{first-course} (see also the standard textbook by Stephen Simpson~\cite{simpson09}). The comprehension axiom for an $\mathcal L_2$-formula $\varphi$ and a designated variable~$x$ is the universal closure of the formula
\begin{equation}\tag{$\varphi\text{-}\mathsf{CA}$}
\exists X\subseteq\mathbb N\forall x\in\mathbb N\,(x\in X\leftrightarrow\varphi(x,y_1,\ldots,y_m,Y_1,\ldots,Y_n)).
\end{equation}
In this context, we will say that $\varphi$ is parameter-free if it does not contain free set variables, i.\,e., if we have $n=0$. We do allow number variables $y_1,\ldots,y_m$, noting that they could be coded into~$x$. As in the previous course~\cite{first-course}, a formula $\theta$ is called arithmetical if it does not contain bound set variables. A~$\Pi^1_1$-formula has the form $\forall Z\subseteq\mathbb N.\,\theta$ for arithmetical~$\theta$. In the previous course~\cite{first-course} we have encountered the theory $\aca_0$ of arithmetical comprehension. To obtain $\pica^-$ from $\aca_0$, we add the axiom ($\varphi\text{-}\mathsf{CA}$) for each parameter-free $\Pi^1_1$-formula~$\varphi$. Let us note that ($\varphi\text{-}\mathsf{CA}$) is still an axiom for arithmetical $\varphi$ with parameters. As before, the subscript~$0$ indicates that induction is only available for properties that are given by a set variable, i.\,e., in the form
\begin{equation*}
\forall X\subseteq\mathbb N\,\big(0\in X\land\forall x\in\mathbb N\,(x\in X\to x+1\in X)\to\forall x\in\mathbb N.\,x\in X\big).
\end{equation*}
The supercript in $\pica^-$ marks the restriction on parameters. Accordingly, we write $\pica$ for the theory that results from~$\aca_0$ (equivalently from~$\rca_0$) when we add the axioms ($\varphi\text{-}\mathsf{CA}$) for all $\Pi^1_1$-formulas~$\varphi$.

The theory $\pica$ is the strongest of the `big five' axiom systems that are central to the research programme of reverse mathematics. It captures the precise strength of important mathematical results such as the Cantor-Bendixson theorem or the fact that any countable Abelian group is a direct sum of a divisible and a reduced group (see~\cite{simpson09}). In Remark~\ref{rmk:unprovability} below, we will see that $\pica^-$ does easily prove Kruskal's theorem. Together with the result from the previous lecture~\cite{first-course}, this confirms that $\pica^-$ is (much) stronger than~$\aca_0$. In the next section we will present an extended Kruskal theorem due to Harvey Friedman, which strengthens the original result by a so-called gap condition. This extended Kruskal theorem is unprovable in~$\pica$ (see again~\cite{simpson85}). The gap condition is particularly relevant due to its close connection with the graph minor theorem of Neil Robertson and Paul Seymour~\cite{robertson-seymour-gm}, which has been described as one of the ``deepest theorems that mathematics has to offer" (in the textbook by Reinhard Diestel~\cite{diestel-graph-theory}). As shown by Friedman, Robertson and Seymour~\cite{friedman-robertson-seymour}, the extended Kruskal theorem is equivalent (over a weak base theory) to the graph minor theorem for bounded tree-width. It follows that the (bounded) graph minor theorem is unprovable in $\pica$. This is one of the most spectacular manifestations of G\"odel's theorems in mathematics. It is particularly impressive because the graph minor theorem does not assert the existence of any infinite sets. In more technical terms, it is a $\Pi^1_1$-statement and hence valid in all $\omega$-models. As a consequence, its independence cannot be established by typical methods of computability or set theory (cf.~the discussion of Specker sequences in the introduction to the previous course~\cite{first-course}). It appears that the only known proof relies on the ordinal analysis of~$\pica$. In the present course, we do not prove the independence of the graph minor theorem, but we will see many important ingredients: To facilitate matters, we will give an ordinal analysis of $\pica^-$ rather than~$\pica$. As an application, we show that a fragment of the extended Kruskal theorem is unprovable in $\pica^-$ (see Corollary~\ref{cor:EKT-unprovable} below).

It is a major step from $\aca_0$ to~$\pica^-$. To explain this, we recall that models for $\mathcal L_2$ have the form $(\mathcal N,\mathcal S)$, where $\mathcal S$ is a subset of the powerset of~$\mathcal N$. Let us focus on the case where $\mathcal N$ is the standard structure $\mathbb N$ of natural numbers. The interpretation of an $\mathcal L_2$-formula $\varphi(x)$ is given by
\begin{equation*}
\llbracket\varphi\rrbracket_{\mathcal S}:=\{x\in\mathbb N\,|\,(\mathbb N,\mathcal S)\vDash\varphi(x)\}.
\end{equation*}
With respect to the comprehension principle, we now observe
\begin{equation*}
(\mathbb N,\mathcal S)\vDash(\varphi\text{-}\textsf{CA})\quad\Leftrightarrow\quad\llbracket\varphi\rrbracket_{\mathcal S}\in\mathcal S.
\end{equation*}
The right side reveals a certain circularity, which becomes relevant when we aim to construct models of $\pica^-$ or related theories: To ensure $\llbracket\varphi\rrbracket_{\mathcal S}\in\mathcal S$, we may need to enlarge the set $\mathcal S$. When we do so, however, the meaning of $\llbracket\varphi\rrbracket_{\mathcal S}$ can change. In the special case where $\varphi$ is arithmetical, the interpretation $\llbracket\varphi\rrbracket=\llbracket\varphi\rrbracket_{\mathcal S}$ is independent of~$\mathcal S$. To obtain a model $(\mathbb N,\mathcal S)$ of $\aca_0$, it is enough to set
\begin{equation*}
\mathcal S:=\{\llbracket\varphi\rrbracket\,|\,\varphi(x)\text{ is an arithmetical $\mathcal L_2$-formula}\},
\end{equation*}
as in the proof of Proposition~6.5 from the previous lecture~\cite{first-course}. We point out that the resulting set~$\mathcal S$ is countable. In contrast, the only obvious model of~$\pica^-$ seems to involve the full powerset of~$\mathbb N$, even though a countable model can be obtained via the L\"owenheim-Skolem theorem. A more explicit construction of a countable model can be found in the proof of Theorem~\ref{thm:pica-id-conserv} below.

The previous paragraph relates to the notion of `predicativity', which is an important theme in the foundations of mathematics. Somewhat simplified, a definition of some object~$X$ is impredicative if it involves a condition that ranges over a `large' collection~$\mathcal S$ of which~$X$ is a member. This is the case in the above definition of $X=\llbracket\varphi\rrbracket_{\mathcal S}$, when we take $\mathcal S$ to be the full powerset. A predicative definition is typically associated with a construction `from below', while impredicative definitions are often justified with the help of a given object that is large (uncountable or at least noncomputable). The predicative stance goes back to Hermann Weyl~\cite{weyl-continuum} and has been pursued, in particular, by Solomon Feferman (see, e.\,g.,~\cite{feferman-predicativity} as well as the study by Laura Crosilla~\cite{crosilla-feferman}). While there are different ways to make the notion of predicativity precise (cf.~the criticism of Nik Weaver~\cite{weaver}), the formal analysis due to Feferman~\cite{feferman64} and, independently, Kurt Sch\"utte~\cite{schuette64} is by far the most~influential.

In ordinal analysis, one associates different methods with predicative and with impredicative theories, even though there is fruitful interaction. Predicative theories are often analyzed with the help of cut elimination. The method that we have seen in the first part of this lecture~\cite{first-course} can be generalized considerably, e.\,g., to infinite cut ranks. We will present such a generalization in Section~\ref{sect:Veblen-hierarchy} below, but the focus of the present course is on an impredicative method known as collapsing. Roughly speaking, the idea is to collapse uncountable proof trees into countable ones. This idea admits vast generalizations, which support the strongest known ordinal analyses, due to Toshiyasu Arai~\cite{arai-reflection} and Michael Rathjen~\cite{rathjen-reflection,rathjen-stability,rathjen-Pi12}.

To give a more technical summary for the expert, we will approach $\pica^-$ via the theory~$\id_1$ of non-iterated inductive definitions. As in the textbook by Wolfram Pohlers~\cite{pohlers-proof-theory}, the latter will be analyzed via the method of operator controlled derivations, which is due to Wilfried Buchholz~\cite{buchholz-local-predicativity}. The proof theoretic ordinal of $\pica^-$ and~$\id_1$ is the so-called Bachmann-Howard ordinal~$\vartheta(\varepsilon_{\Omega+1})$. We will approach this ordinal via the gap condition of Harvey Friedman, i.\,e., in combinatorial rather than set theoretic terms.

Some readers may feel that Sections~\ref{sect:iterated-Kruskal} and~\ref{sect:Bachmann-Howard} are overly technical. If one is~prepared to accept Corollary~\ref{cor:bachmann-howard-wo} without proof, an alternative route is available: First read the more conceptual and accessible Section~\ref{sect:ind-def}. Then read Section~\ref{sect:Bachmann-Howard} up to and including Exercise~\ref{ex:Bachmann-Howard}. Skip the material between Definition~\ref{def:emb-T2} and Theorem~\ref{thm:emb-T2} (as well as the entire Section~\ref{sect:iterated-Kruskal}). Read the rest of Section~\ref{sect:Bachmann-Howard} from Corollary~\ref{cor:bachmann-howard-wo}~on. Then read Sections~\ref{sect:operator-control} to~\ref{sect:Veblen-hierarchy} (but ignore Corollary~\ref{cor:EKT-unprovable} or consider the omitted material at this point).

In order to present some topics in detail, we need to omit others entirely, which means that we cannot do justice to the subject as a whole. For a more complete picture of ordinal analysis, we strongly recommend to consider the survey articles by Michael Rathjen~\cite{rathjen-realm,rathjen-icm} and by Rathjen and Wilfried Sieg~\cite{rathjen-sieg-stanford}. Classical textbooks are due to Kurt Sch\"utte~\cite{schuette77}, Gaisi Takeuti~\cite{takeuti-proof-theory}, Jean-Yves Girard~\cite{girard87} and Wolfram Pohlers~\cite{pohlers-proof-theory}, as well as to Helmut Schwichtenberg and Stanley Wainer~\cite{schwichtenberg-wainer}.

\section{Iterating Kruskal's theorem: Friedman's gap condition}\label{sect:iterated-Kruskal}

In this section, we show that iterated applications of Kruskal's theorem lead to partial orders with a certain `gap condition', which is due to Harvey Friedman (see the presentation by Stephen Simpson~\cite{simpson85}). This results in an extended Kruskal theorem that is unprovable by $\Pi^1_1$-comprehension. Our presentation of the gap~condition builds on~\cite{freund-kruskal-gap}.

A finite multiset consists of a finite set~$a$ and a function $f:a\to\mathbb N\backslash\{0\}$, which determines the multiplicity of its elements. Let $M(X)$ be the set of finite~multi\-sets with elements $a\subseteq X$. One can also characterize $M(X)$ as the quotient of the finite sequences in~$X$ modulo re-ordering. We write $[x_0,\ldots,x_{k-1}]$ for the finite multi\-set $f:\{x_0,\ldots,x_{k-1}\}\to\mathbb N\backslash\{0\}$ where $f(x_i)$ is the cardinality of~\mbox{$\{j<k\,|\,x_i=x_j\}$}. The following definitions may at first seem ad hoc. Below, however, we argue~that they arise naturally when one attempts to iterate Kruskal's theorem.

\begin{definition}
For $N\in\mathbb N$ and any partial order~$X$, we declare that $T_N(X)$ is generated by the following recursive clauses:
\begin{enumerate}[label=(\roman*)]
\item we have an element~$\overline x\in T_N(X)$ for each~$x\in X$,
\item given a multiset $\sigma=[t_0,\ldots,t_{k-1}]$ of elements $t_i\in T_N(X)$ that have already been constructed, we add an element $n\star\sigma$ for each~$n<N$.
\end{enumerate}
Put $r(\overline x):=-1$ and $r(n\star\sigma):=n$. We then define $T^0_N(X):=\{t\in T_N(X)\,|\,r(t)\leq 0\}$.
\end{definition}

Intuitively, the elements of $T_N(X)$ are rooted trees with vertex labels $n<N$ and $x\in X$, where the latter may only occur at leaves. Specifically, $n\star[t_0,\ldots,t_{k-1}]$ corresponds to the tree with immediate subtrees~$t_0,\ldots,t_{k-1}$ and label~$n$ at the root (which is a leaf when we have~$k=0$). As $[t_0,\ldots,t_{k-1}]$ is a multiset (rather than a sequence), we are concerned with unordered trees. Given a binary relation~$\leq$ on~$Y$, we define $\leq^M$ as the binary relation on~$M(Y)$ such that
\begin{equation*}
[x_0,\ldots,x_{k-1}]\leq^M[y_0,\ldots,y_{m-1}]
\end{equation*}
holds precisely when there is an injection $f:\{0,\ldots,k-1\}\to\{0,\ldots,m-1\}$ with $x_i\leq y_{f(i)}$ for all~$i<k$. We point out that there is a close connection with Higman's lemma. In clause~(ii) of the following definition, one can evalute $\sigma\gap^M\tau$ recursively, even when $\gap$ is not defined on all of~$T_N(X)$ yet.

\begin{definition}\label{def:gap-order}
To determine a binary relation $\gap$ on~$T_N(X)$ by recursion, we declare that $s\gap t$ holds precisely when one of the following conditions is satisfied:
\begin{enumerate}[label=(\roman*)]
\item $s=\overline x$ and $t=\overline y$ with $x\leq_X y$,
\item $s=m\star\sigma$ and $t=n\star\tau$ with $m=n$ and $\sigma\gap^M \tau$,
\item $t=n\star[t_0,\ldots,t_{k-1}]$ with $r(s)\leq n$ and $s\gap t_i$ for some~$i<k$.
\end{enumerate}
\end{definition}

Intuitively, we have $s\gap t$ when there is an embedding between the corresponding trees, which must respect the labels in a certain sense. Clause~(ii) corresponds to an embedding that sends the root to the root and the immediate subtrees of~$s$ into different subtrees of~$t$. In the case of clause~(iii), the entire tree~$s$ is mapped into a proper subtree. Any embedding that is generated in this way will preserve infima in the tree order. As we demand $m=n$ in~(ii), our embeddings preserve labels. The condition~$r(s)\leq n$ in~(iii) amounts to the strong gap condition of Harvey Friedman (see~\cite{simpson85}). Indeed, it recursively ensures the following: All nodes in~$t$ that lie below the image of the root of~$s$ will have label at least~$r(s)$. Via clause~(ii), the analogous condition propagates to all `gaps' that the image of~$s$ leaves in~$t$. These claims are verified in the proof of~\cite[Proposition~5.3]{freund-kruskal-gap}. Part~(a) of the following exercise should make them plausible.

\begin{exercise}\label{ex:gap-partial-order}
(a) Find examples for $s\gap t$ and for~$s\not\gap t$. Draw these examples as labelled trees and embeddings.

(b) Show that $(M(Y),\leq^M)$ is a partial order when the same holds for~$(Y,\leq)$. \emph{Remark:} The proof of antisymmetry deserves some thought.

(c) Consider the height function $h:T_N(X)\to\mathbb N$ with
\begin{equation*}
h(\overline x):=0\quad\text{and}\quad h(n\star[t_0,\ldots,t_{k-1}]):=\max\big(\{0\}\cup\{h(t_i)+1\,|\,i<k\}\big).
\end{equation*}
Show that $s\gap t$ implies~$h(s)\leq h(t)$.

(d) Prove that $\gap$ is a partial order on~$T_N(X)$. \emph{Hint:} Use~(c) to show that $t\gap s$ fails when $s\gap t$ holds by clause~(iii) of Definition~\ref{def:gap-order}.
\end{exercise}

The following is copied from~\cite{freund-kruskal-gap} but already implicit in~\cite[Section~4]{simpson85}. We point out that elements of $T_N(T_{N+1}^0(X))$ have the form $\overline s$ with $s\in T_{N+1}^0(X)\subseteq T_{N+1}(X)$ or the form $n\star[t_0,\ldots,t_{k-1}]$ with $n<N$ and $t_i\in T_N(T_{N+1}^0(X))$.

\begin{definition}
Given $N\in\mathbb N$ and a partial order~$X$, we recursively define
\begin{equation*}
\pi:T_N\left(T_{N+1}^0(X)\right)\to T_{N+1}(X)
\end{equation*}
by $\pi(\overline s):=s$ and $\pi(n\star[t_0,\ldots,t_{k-1}]):=(n+1)\star[\pi(t_0),\ldots,\pi(t_{k-1})]$. Then let
\begin{equation*}
\kappa:M\left(T_N\left(T_{N+1}^0(X)\right)\right)\to T^0_{N+1}(X)
\end{equation*}
be given by $\kappa([t_0,\ldots,t_{k-1}]):=0\star[\pi(t_0),\ldots,\pi(t_{k-1})]$. Finally, we assign a finite set $\supp(t)\subseteq T_{N+1}^0(X)$ to each element~$t\in T_N(T_{N+1}^0(X))$, by setting $\supp(\overline s):=\{s\}$ and $\supp(n\star\tau):=\supp^M(\tau)$ with $\supp^M([t_0,\ldots,t_{k-1}]):=\bigcup_{i<k}\supp(t_i)$.
\end{definition}

It is instructive to consider the case where we have $N=0$ and~$X=\emptyset$. Here the set $T_1(\emptyset)=T_1^0(\emptyset)$ and the relation~$\gap$ coincide with the set of finite rooted trees without labels and the usual embeddability relation, as the single label in~$1=\{0\}$ has no effect. The $\pi$ above is the obvious isomorphism $T_0(Y)\cong Y$ for~$Y=T_1(\emptyset)$. Modulo the latter, our $\kappa$ amounts to
\begin{equation*}
M\left(T_1(\emptyset)\right)\to T_1(\emptyset)\quad\text{with}\quad [t_0,\ldots,t_{k-1}]\mapsto 0\star[t_0,\ldots,t_{k-1}].
\end{equation*}
This map corresponds to the recursive construction of trees, which yields a tree with a new root (labelled by~$0$) when the immediate subtrees~$t_i$ are already constructed. We point out that $0\star[t_0,\ldots,t_{k-1}]$ will thus depend on the elements of the `support' set $\supp^M([t_0,\ldots,t_{k-1}])=\{t_0,\ldots,t_{k-1}\}$ (still computed modulo~$T_0(Y)\cong Y$). The recursive construction of trees corresponds to the fact that $T_1(\emptyset)=T_1^0(\emptyset)$ is the least fixed point of the transformation
\begin{equation*}
Y\mapsto M(Y)\cong M(T_0(Y)).
\end{equation*}
More generally, one can view $T_{N+1}^0(\emptyset)$ as the least fixed point of~$Y\mapsto M(T_N(Y))$. Once we have $T_{N+1}^0(\emptyset)$ and $Y\mapsto T_N(Y)$, we reach $T_{N+1}(\emptyset)$ via the map~$\pi$ (which is non-trivial for~$N>0$). This yields a recursive construction that is made precise in~\cite{freund-kruskal-gap}, based on earlier work by Ryu Hasegawa~\cite{hasegawa97,hasegawa-analytic} and Andreas Weiermann~\cite{weiermann09}. Part~(a) of the following exercise provides vital intuition. For a solution of (b)~to~(d), we refer to Proposition~6.2 and Lemma~6.3 of~\cite{freund-kruskal-gap} (cf.~also~\cite[Lemma~4.5]{simpson85}).

\begin{exercise}
(a) Consider some examples of trees in~$T_2(\emptyset)$ and~$T^0_2(\emptyset)$. Determine their preimages under~$\pi$ and under~$\kappa$, respectively.

(b) Prove that $\pi:T_N\left(T_{N+1}^0(X)\right)\to T_{N+1}(X)$ is an order isomorphism for any partial order~$X$ and any~$N\in\mathbb N$. \emph{Remark:} Use different symbols such as~$\gap$ and $\gap_0$ for the order relations on~$T_{N+1}(X)\supseteq T^0_{N+1}(X)$ and on $T_N(T_{N+1}^0(X))$, which arise from iterative applications of Definition~\ref{def:gap-order}.

(c) Show that any $s\in T^0_{N+1}(X)$ and $t\in T_N(T_{N+1}^0(X))$ validate
\begin{equation*}
s\gap\pi(t)\quad\Leftrightarrow\quad s\gap t'\text{ for some }t'\in\supp(t).
\end{equation*}
\emph{Remark:} Intuitively, the equivalence shows that a tree $s$ with root label~$0$ embeds into a tree $\pi(t)$ with root label $n+1>0$ precisely when it embeds into a subtree~$t'$ with root label~$0$. The crucial point is that the gap condition is immaterial below nodes with label~$0$, i.\,e., that the condition $r(s)\leq n$ in Definition~\ref{def:gap-order} is automatic when we have $r(s)\leq 0$. This explains the special status of $T^0_{N+1}(X)\subseteq T_{N+1}(X)$.

(d) Prove that any $\sigma,\tau\in M(T_N(T_{N+1}^0(X)))$ validate
\begin{equation*}
\kappa(\sigma)\gap\kappa(\tau)\quad\Leftrightarrow\quad\sigma\gap_0^M\tau\text{ or }\kappa(\sigma)\gap t'\text{ for some }t'\in\supp^M(\tau),
\end{equation*}
for $\gap$ and $\gap_0$ as in the remark in part~(b). Also show that $\overline x\gap\kappa(\tau)$ holds precisely when we have $\overline x\gap t'$ for some $t'\in\supp^M(\tau)$. \emph{Remark:} It may be instructive to~consider the case of~$N=0$. Here the equivalence is just the usual recursive characterization of tree embeddability (note that $\gap$ and $\gap_0$ coincide modulo~$T_0(Y)\cong Y$).
\end{exercise}

An infinite sequence $y_0,y_1,\ldots$ in a partial order~$(Y,\leq_Y)$ is called bad if there are no indices~$i<j$ with $y_i\leq_Y y_j$. One says that $Y$ (together with~$\leq_Y$) is a well partial order if there are no infinite bad sequences. As we have seen, the usual embeddability relation between unlabelled trees coincides with our relation $\gap$ on~$T_1(\emptyset)$. The~statement that this relation is a well partial order is known as Kruskal's theorem. In view of the discussion above, the following can be seen as an iterated version of that theorem (where the labels keep track of the iterations). Even though we are most interested in the result for~$X=\emptyset$, the inclusion of arbitrary~$X$ is essential for the recursive construction and the following proof by induction.

\begin{theorem}[{Harvey Friedman's extended Kruskal theorem; see~\cite{simpson85}}]\label{thm:EKT}
For each number~$N\in\mathbb N$, if $X$ is a well partial order, then so is~$T_N(X)$.
\end{theorem}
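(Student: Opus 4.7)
The plan is induction on $N$, with a minimal bad sequence argument in the Nash-Williams style driving the inductive step. The base case $N = 0$ is immediate, since $T_0(X)$ is order-isomorphic to $X$ via $x \mapsto \overline x$.

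For the inductive step from $N$ to $N+1$, I would exploit the factorization offered by the preceding exercises: because $\pi \colon T_N(T^0_{N+1}(X)) \to T_{N+1}(X)$ is an order isomorphism, it suffices to prove that $T^0_{N+1}(X)$ is a WPO, whereupon the induction hypothesis applied with $Y := T^0_{N+1}(X)$ finishes the job. Assume for contradiction that $T^0_{N+1}(X)$ admits a bad sequence, and pick one $(s_i)_{i \in \mathbb N}$ that is minimal in the Nash-Williams sense: each $s_i$ has the least possible height $h(s_i)$ among elements that extend $s_0, \ldots, s_{i-1}$ to a bad sequence (the first exercise's part on $\gap$ respecting heights is what legitimates this choice). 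Either infinitely many $s_i$ are leaves $\overline{x_i}$, in which case the corresponding subsequence yields a bad sequence in $X$ contradicting the hypothesis; or there is some $N_0$ beyond which each $s_i = \kappa(\sigma_i)$ for a multiset $\sigma_i \in M(T_N(T^0_{N+1}(X)))$.

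In the latter case, set $B := \bigcup_{i \geq N_0} \supp^M(\sigma_i) \subseteq T^0_{N+1}(X)$, and establish the subclaim that $B$ is a WPO. If $(t_j)_{j \in \mathbb N}$ were bad in $B$, pick $m(j) \geq N_0$ with $t_j \in \supp^M(\sigma_{m(j)})$ and set $n := m(0)$. Then the combined sequence $s_0, \ldots, s_{n-1}, t_0, t_1, \ldots$ is bad: the characterization of $\gap$ on $\pi$-images gives $t_j \gap \pi(t')$ for some $t'$ in the multiset $\sigma_{m(j)}$ (using reflexivity $t_j \gap t_j$), and then clause~(iii) of Definition~\ref{def:gap-order} yields $t_j \gap s_{m(j)}$ because $r(t_j) \leq 0$; so any cross-embedding $s_i \gap t_j$ with $i < n$ would yield $s_i \gap s_{m(j)}$ by transitivity, contradicting badness of $(s_k)$. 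Since $t_0$ is a proper subtree of $s_n$ via $\pi$ and so has strictly smaller height, this violates minimality of $(s_i)$. Once $B$ is a WPO, the induction hypothesis gives $T_N(B)$ a WPO, Higman's lemma upgrades this to $M(T_N(B))$, and by construction the $\sigma_i$ lie in $M(T_N(B))$; hence some $i<j$ satisfy $\sigma_i \gap^M_0 \sigma_j$, and the characterization of $\gap$ on $\kappa$-images delivers $s_i \gap s_j$, contradicting badness.

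The main obstacle is the subclaim that $B$ is a WPO, and within it the embedding lemma $t_j \gap s_{m(j)}$. This is where the gap condition's interaction with $r$-labels becomes decisive: the premise $r(t_j) \leq 0$ in clause~(iii) of Definition~\ref{def:gap-order} holds precisely because $t_j \in T^0_{N+1}(X)$, which is the whole reason the restricted class $T^0_{N+1}(X) \subseteq T_{N+1}(X)$ was singled out in the first place. The equivalences in the second exercise are tailored exactly to this bookkeeping, so the remaining technical work is largely their careful application. A secondary point worth flagging is that the minimal bad sequence construction invokes a countable choice principle, though this is standard and does not affect the classical validity of the result.
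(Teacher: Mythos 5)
Your argument follows the paper's proof step for step: induction on $N$, reduction to $T^0_{N+1}(X)$ via the isomorphism $\pi$, a minimal bad sequence chosen by height, well-partial-orderedness of the union of supports, and the concluding application of the induction hypothesis together with Higman's lemma. There is, however, one step that fails as written: the index bookkeeping in your subclaim that $B$ is a well partial order. You pick, for each $j$, some $m(j)\geq N_0$ with $t_j\in\supp^M(\sigma_{m(j)})$, set $n:=m(0)$, and claim that a cross-embedding $s_i\gap t_j$ with $i<n$ contradicts badness of $(s_k)$ because it yields $s_i\gap s_{m(j)}$. But badness only forbids $s_a\gap s_b$ for $a<b$, and nothing guarantees $i<m(j)$: it can happen that $m(j)<m(0)=n$, and then an index $i$ with $m(j)\leq i<n$ gives $s_i\gap s_{m(j)}$ without any contradiction. (You cannot simply shrink $n$ to $N_0$ either, since $n=m(0)$ is exactly what makes $h(t_0)<h(s_n)$ available for the appeal to minimality.) The paper's proof hides the same point in the phrase ``we find indices $i(0)<i(1)<\ldots$'', but that monotonicity is precisely what makes the deduction go through, and your explicit construction does not provide it.

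The repair is standard. Simplest version: whatever your choice of the $m(j)$, pick $j_0$ minimizing $m(j)$ over all $j$; the tail $t_{j_0},t_{j_0+1},\ldots$ is still bad, and with $n:=m(j_0)$ every cross pair $s_i\gap t_j$ (with $j\geq j_0$ and $i<n=m(j_0)\leq m(j)$) now does contradict badness, while $h(t_{j_0})<h(s_n)$ still violates minimality. Alternatively, take each $m(j)$ least possible and extract a subsequence of $(t_j)$ along which $m(j)$ is strictly increasing; this is always possible because otherwise infinitely many $t_j$ would lie in a single finite set $\supp^M(\sigma_m)$, forcing a repetition $t_j=t_{j'}$ with $j<j'$ and hence $t_j\gap t_{j'}$ by reflexivity, contradicting badness of $(t_j)$. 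With this adjustment your proof is complete and coincides with the one in the text; the small items you defer (that $s\in\supp(t)$ entails $h(s)\leq h(\pi(t))$, and that $\supp(t)\subseteq B$ entails $t\in T_N(B)$) are indeed routine inductions.
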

\begin{proof}
We argue by induction on~$N$. In the base case it suffices to recall~$T_0(X)\cong X$ from above. For the induction step we fix a well partial order~$X$. By the induction hypothesis and part~(b) of the previous exercise, it suffices to show that $T_{N+1}^0(X)$ is a well partial order. Let us assume that this fails. We then define $B\neq\emptyset$ as the set of all finite sequences $\langle t_0,\ldots,t_{i-1}\rangle$ that are initial segments of some infinite bad sequence in~$T^0_{N+1}(X)$. Let us observe that any sequence in~$B$ can be extended. Consider the height function $h:T_{N+1}(X)\to\mathbb N$ from Exercise~\ref{ex:gap-partial-order}(c). Recursively, we pick $t_i$ such that $h(t_i)$ is minimal with~$\langle t_0,\ldots,t_i\rangle\in B$, where $t_0,\ldots,t_{i-1}$ are the elements that have already been picked. This yields a bad sequence $t_0,t_1,\ldots$ that is minimal, in the sense that no sequence $t_0,\ldots,t_{i-1},t_i',t'_{i+1},\ldots$ with $h(t'_i)<h(t_i)$ can be bad. We point out that the use of minimal bad sequences is due to Crispin \mbox{Nash-Williams}~\cite{nash-williams63}. Any element of~$T^0_{N+1}(X)$ can be uniquely written as~$\overline x$ or as~$\kappa(\sigma)$, since $\pi$ is bijective by the previous exercise. Let us consider the set
\begin{equation*}
Y:=\bigcup\{\supp^M(\sigma)\,|\,t_i=\kappa(\sigma)\text{ for some }i\in\mathbb N\}\subseteq T^0_{N+1}(X).
\end{equation*}
We show that $\gap$ is a well partial order on~$Y$. If not, we find indices $i(0)<i(1)<\ldots$ and a bad sequence $t'_{i(0)},t'_{i(1)},\ldots$ with $t'_{i(j)}\in\supp^M(\sigma)$ for~$t_{i(j)}=\kappa(\sigma)$. Note that we get $h(t'_{i(j)})<h(t_{i(j)})$, as a straightforward induction on~$t$ shows that $s\in\supp(t)$ entails $h(s)\leq h(\pi(t))$. By minimality, it follows that the sequence
\begin{equation*}
t_0,\ldots,t_{i(0)-1},t'_{i(0)},t'_{i(1)},\ldots
\end{equation*}
cannot be bad. We must thus have $t_i\gap t'_{i(j)}$ for some~$i<i(0)$ and some~$j\in\mathbb N$. But then part~(d) of the previous exercise yields~$t_i\gap t_{i(j)}$, which contradicts the fact that $t_0,t_1,\ldots$ is bad. As~$X$ is a well partial order, only finitely many entries of the bad sequence $t_0,t_1,\ldots$ can have the form~$\overline x$. After passing to a subsequence, we may assume that they are all given as~$t_i=\kappa(\sigma_i)$. The choice of~$Y$ yields
\begin{equation*}
\sigma_i\in M(T_N(Y))\subseteq M(T_N(T^0_{N+1}(X))),
\end{equation*}
as an induction on~$t$ shows that $\supp(t)\subseteq Y$ entails~$t\in T_N(Y)$. Given that~$Y$ is a well partial order, the same holds for~$M(T_N(Y))$, by the main induction hypothesis and Higman's lemma (see e.\,g.~\cite[Remark~X.3.19]{simpson09}; the result for multisets reduces to the one for sequences).  We thus find $i<j$ with $\sigma_i\gap_0^M\sigma_j$, in the notation from the previous exercise. By part~(d) of the latter, we get $t_i\gap t_j$, which is incompatible with the assumption that $t_0,t_1,\ldots$ is bad.
\end{proof}

As it stands, the previous result does not remain valid when~$N$ is replaced by an infinite ordinal. Indeed, if we have $n<\alpha$ for all~$n\in\mathbb N$, then we obtain the bad sequence $0\star[],1\star[],\ldots$ in~$T_\alpha(\emptyset)$. On the other hand, if one demands $m\leq n$ rather than $m=n$ in clause~(ii) of Definition~\ref{def:gap-order}, then $T_\alpha(\emptyset)$ becomes a well partial order for any ordinal~$\alpha$, as conjectured by Friedman and shown by Igor K\v{r}\'i\v{z}~\cite{kriz-generalized-kruskal}. Let~us conclude this section with some metamathematical considerations:

\begin{remark}\label{rmk:unprovability}
In the previous proof, the complement of the set~$B$ can be defined by $\Pi^1_1$-comprehension, as the definition of~$B$ involves a single existential quantifier over infinite objects (`some infinite bad sequence'). Once $B$ is available, the minimal bad sequence $t_0,t_1,\ldots$ from the previous proof can be formed by arithmetical comprehension (pick $t_i$ with minimal code to avoid choice). One can conclude that $\pica$ proves the result of Theorem~\ref{thm:EKT} for each externally fixed~$N\in\mathbb N$. The induction principle that yields the result for all $N$ is not available in~$\pica$. To explain this, we recall that induction has only been included for properties that are given by a set variable. With the help of comprehension, we get induction for $\Pi^1_1$-statements (cf.~Exercise~6.4 from the first part of the lecture~\cite{first-course}). The present induction statement does not fall into this class: it involves alternating set quantifiers, as the $\Pi^1_1$-property of `being a well partial order' appears in both premise and conclusion. Theorem~\ref{thm:EKT} is indeed unprovable in the theory~$\pica$, by a famous result of Friedman (see again~\cite{simpson85}). Together with Robertson and Seymour~\cite{friedman-robertson-seymour},~the latter has deduced that $\pica$ cannot prove the graph minor theorem, as mentioned in the introduction. In the aforementioned definition of~$B$, the well partial order~$X$ occurs as a parameter. Hence the parameter-free version~$\pica^-$ can only form~$B$ for concrete $X$ with a suitable definition. This suffices to show that $T_1(\emptyset)$ is a well partial order, so that $\pica^-$ proves the original Kruskal theorem. To prove that $T_2(\emptyset)$ is a well partial order, on the other hand, we would need to consider $T_1(Y)$ for a `complex' set~$Y$. In Corollary~\ref{cor:EKT-unprovable} below, we will show that $\pica^-$ cannot prove the result that $T_2(\emptyset)$ is a well partial order.
\end{remark}

\section{The Bachmann-Howard ordinal}\label{sect:Bachmann-Howard}

The strength of $\pica^-$ is measured by the so-called Bachmann-Howard ordinal. In this section, we introduce this ordinal via a notation system $\vartheta(\varepsilon_{\Omega+1})$ that is taken from~\cite{rathjen-weiermann-kruskal}. We also show that $\bho$ can be embedded into the tree order~$T_2(\emptyset)$ that was discussed above. In view of Theorem~\ref{thm:EKT}, this confirms that $\bho$ is a well order. Conversely, we will eventually deduce that the cited theorem for~$N=2$ is unprovable in~$\pica^-$. Let us note that one could prove a somewhat sharper result by working with a certain suborder of~$T_2(\emptyset)$, which has been determined by Jeroen van der Meeren, Michael Rathjen and Andreas Weiermann~\cite{MRW-Bachmann} (for the case of ordered trees).

A detailed justification of the following recursion is given below. We will also motivate the definition in terms of set theory and combinatorics.

\begin{definition}\label{def:Bachmann-Howard}
By recursion, we simultaneously define a set $\vartheta(\varepsilon_{\Omega+1})$ of terms, a binary relation~$\prec$ on this set, and a function~$E$ that assigns a finite $E(\alpha)\subseteq\vartheta(\varepsilon_{\Omega+1})$ to each $\alpha\in\vartheta(\varepsilon_{\Omega+1})$. First, the set $\vartheta(\varepsilon_{\Omega+1})$ is generated as follows:
\begin{enumerate}[label=(\roman*)]
\item we have a term $\Omega\in\vartheta(\varepsilon_{\Omega+1})$,
\item for each $\alpha\in\vartheta(\varepsilon_{\Omega+1})$ we get another term~$\vartheta\alpha\in\vartheta(\varepsilon_{\Omega+1})$,
\item we get a term $\langle\alpha_0,\ldots,\alpha_{n-1}\rangle$ for any $\alpha_0,\ldots,\alpha_{n-1}\in\vartheta(\varepsilon_{\Omega+1})$ that meet the following conditions (note that $n=0$ is also permitted):
\begin{itemize}[label={--}]
\item if $n>1$ then we have $\alpha_{n-1}\preceq\ldots\preceq\alpha_0$, where $\alpha\preceq\beta$ abbreviates the disjunction of~$\alpha\prec\beta$ and $\alpha=\beta$ (equality as terms),
\item if $n=1$ then the term $\alpha_0$ does not have the form~$\Omega$ or~$\vartheta\beta$.
\end{itemize}
\end{enumerate}
Secondly, the function~$E$ is explained by
\begin{equation*}
E(\Omega):=\emptyset,\quad E(\vartheta\alpha):=\{\vartheta\alpha\},\quad E(\langle\alpha_0,\ldots,\alpha_{n-1}\rangle):=\textstyle\bigcup_{i<n}E(\alpha_i).
\end{equation*}
Finally, the relation $\alpha\prec\beta$ holds precisely when one of the following clauses applies:
\begin{enumerate}[label=(\roman*')]
\item $\alpha=\Omega$ and $\beta=\langle\beta_0,\ldots,\beta_{n-1}\rangle$ with $\Omega\preceq\beta_0$ (in particular $n>0$),
\item $\alpha=\vartheta \alpha'$ and one of the following holds:
\begin{itemize}[label={--}]
\item $\beta=\Omega$ or $\beta=\langle\beta_0,\ldots,\beta_{n-1}\rangle$ with $\alpha\preceq\beta_0$,
\item $\beta=\vartheta\beta'$ with $\alpha'\prec\beta'$ and $\gamma\prec\beta$ for all $\gamma\in E(\alpha')$,
\item $\beta=\vartheta\beta'$ and $\alpha\preceq\gamma$ for some $\gamma\in E(\beta')$,
\end{itemize}
\item $\alpha=\langle\alpha_0,\ldots,\alpha_{m-1}\rangle$ and one of the following holds:
\begin{itemize}[label={--}]
\item $\beta$ is of the form $\Omega$ or $\vartheta\beta'$ and we have $m=0$ or $\alpha_0\prec\beta$,
\item $\beta=\langle\beta_0,\ldots,\beta_{n-1}\rangle$ and for some $j\leq\min(m,n)$ we have $\alpha_i=\beta_i$ for all~$i<j$ and either $j=m<n$ or $j<\min(m,n)$ and $\alpha_j\prec\beta_j$.
\end{itemize}
\end{enumerate}
\end{definition}
In order to justify the recursion, one can disentangle the simultaneous definition as follows. First, generate a larger set $\vartheta^0(\varepsilon_{\Omega+1})$ by clauses~(i) to~(iii) above with the condition~$\alpha_{n-1}\preceq\ldots\preceq\alpha_0$ in~(iii) removed. Let $l:\vartheta^0(\varepsilon_{\Omega+1})\to\mathbb N$ be the length function that is given by
\begin{equation*}
l(\Omega):=0,\quad l(\vartheta\alpha):=l(\alpha)+1,\quad l(\langle\alpha_0,\ldots,\alpha_{n-1}\rangle):=n+\textstyle\sum_{i<n}l(\alpha_i).
\end{equation*}
The same clauses as before define~$E$ on the larger set $\vartheta^0(\varepsilon_{\Omega+1})$. A straightforward induction over terms shows that $\alpha'\in E(\alpha)$ implies $l(\alpha')\leq l(\alpha)$. One can now decide $\alpha\in\vartheta(\varepsilon_{\Omega+1})$ and $\beta\prec\gamma$ by simultaneous recursion on $l(\alpha)$ and $l(\beta)+l(\gamma)$.

To provide some first intuition, we note that $\langle\alpha_0,\ldots,\alpha_{n-1}\rangle$ is supposed to represent an ordinal $\omega^{\alpha_0}+\ldots+\omega^{\alpha_{n-1}}$ in Cantor normal form (cf.~Remark~5.5 in the first part of these lecture notes~\cite{first-course}). The terms $\vartheta\alpha$ and~$\Omega$ denote $\varepsilon$-numbers, i.\,e., fixed points of the function~$\gamma\mapsto\omega^\gamma$. Thus $\langle\vartheta\alpha\rangle$ and $\langle\Omega\rangle$ would have the same interpretation as $\vartheta\alpha$ and~$\Omega$, which is why we exclude them in clause~(iii) above. In view of clause~(ii'), our term structure gives rise to a function
\begin{equation*}
\vartheta:\vartheta(\varepsilon_{\Omega+1})\to\vartheta(\varepsilon_{\Omega+1})\cap\Omega:=\{\alpha\in\vartheta(\varepsilon_{\Omega+1})\,|\,\alpha\prec\Omega\}.
\end{equation*}
If $\vartheta(\varepsilon_{\Omega+1})$ is to be a well order, this function into an initial segment cannot be fully order preserving (see, e.\,g.,~\cite[Exercise~3.11]{first-course}). The second point in~(ii') asserts that it is order preserving under the side condition that $\gamma\prec\beta=\vartheta\beta'$ holds for~\mbox{$\gamma\in E(\alpha')$}. In the usual set theoretic interpretation -- which will not play an official role in this lecture --, one would interpret $\Omega$ as the first uncountable or nonrecursive ordinal. The aforementioned side condition has the effect that $\vartheta\alpha'\prec\vartheta\beta'$ is only required for countably many $\alpha'$, so that a suitable value~$\vartheta\beta'\prec\Omega$ is available. A more detailed explanation is given in the paragraph before Proposition~\ref{prop:vartheta-set-def} below. Under the indicated interpretations, the ordinal $\Omega$ itself is the $\Omega$-th $\varepsilon$-number, which is commonly denoted by $\varepsilon_{\Omega}$. The terms in $\vartheta(\varepsilon_{\Omega+1})$ denote ordinals below the next $\varepsilon$-number~$\varepsilon_{\Omega+1}$. Intuitively, the construction has a self-strengthening aspect: Due to the third point in~(ii'), we have $\alpha\prec\vartheta\beta'$ for $\alpha\in E(\beta')$, as seen in the following exercise. This ensures that the values of $\vartheta$ are reasonably large, so that the side condition $\gamma\prec\vartheta\beta'$ is satisfied quite often. But then $\vartheta$ is almost order preserving, which forces its values to be even larger.

\begin{exercise}\label{ex:Bachmann-Howard}
(a) Let us abbreviate
\begin{align*}
E(\alpha)\prec^*\beta\quad&:\Leftrightarrow\quad \alpha'\prec\beta\text{ for all }\alpha'\in E(\alpha),\\
\alpha\preceq^* E(\beta)\quad&:\Leftrightarrow\quad \alpha\preceq \beta'\text{ for some }\beta'\in E(\beta).
\end{align*}
Observe that clause~(ii') above yields
\begin{equation*}
\vartheta\alpha\prec\vartheta\beta\quad\Leftrightarrow\quad(\alpha\prec\beta\text{ and }E(\alpha)\prec^*\vartheta\beta)\text{ or }\vartheta\alpha\preceq^*E(\beta).
\end{equation*}
Convince yourself that we get $E(\alpha)\prec^*\vartheta\alpha$ for any $\alpha\in\vartheta(\varepsilon_{\Omega+1})$.

(b) Prove that $\prec$ is a linear order on $\vartheta(\varepsilon_{\Omega+1})$. \emph{Hint:} For irreflexivity, you will need to show that $\vartheta\alpha\preceq^* E(\alpha)$ fails. If you prove transitivity first, you can use (a) to conclude inductively.

(c) For $\alpha\prec\Omega$, show that $E(\alpha)\prec^*\vartheta\beta$ is equivalent to $\alpha\prec\vartheta\beta$. \emph{Remark:} The set $E(\alpha)$ consists of the $\varepsilon$-numbers below~$\Omega$ in the hereditary Cantor normal form~of~$\alpha$.

(d) Prove that we always have $\alpha_0\prec\langle\alpha_0,\ldots,\alpha_{n-1}\rangle$. Also prove that we get
\begin{equation*}
\langle\alpha_0,\ldots,\alpha_{m-1}\rangle\preceq\langle\beta_0,\ldots,\beta_{n-1}\rangle
\end{equation*}
when there is an injection $g:\{0,\ldots,m-1\}\to\{0,\ldots,n-1\}$ such that $\alpha_i\preceq\beta_{g(i)}$ holds for all~$i<m$. \emph{Hint:} If $g(i)=i$ for $i<j$ but $g(j)>j=g(k)$ with $j<k$, then we have $\alpha_j\preceq\beta_{g(j)}\preceq\beta_{g(k)}$ and $\alpha_k\preceq\alpha_j\preceq\beta_{g(j)}$. Thus you can swap $g(j)$ and~$g(k)$ to achieve $g(j)=j$.

(e) Show that we have
\begin{equation*}
\alpha\preceq\beta\prec\Omega\quad\Rightarrow\quad\text{for any~$\gamma\in E(\alpha)$ there is a $\delta\in E(\beta)$ with $\gamma\preceq\delta$}.
\end{equation*}
Observe that the implication can fail when the condition~$\beta\prec\Omega$ is removed.
\end{exercise}

The following map and Theorem~\ref{thm:EKT} will ensure that~$\vartheta(\varepsilon_{\Omega+1})$ is a well order.

\begin{definition}\label{def:emb-T2}
Let $f:\vartheta(\varepsilon_{\Omega+1})\to T_2(\emptyset)=:T_2$ be given by the recursive clauses
\begin{gather*}
f(\Omega):=1\star[],\quad f(\vartheta\alpha):=0\star[1\star[f(\alpha)]],\\
f(\langle\alpha_0,\ldots,\alpha_{n-1}\rangle):=i\star[f(\alpha_0),\ldots,f(\alpha_{n-1})]\text{ with }i=\begin{cases}
0&\text{if }n=0\text{ or }\alpha_0\prec\Omega,\\
1&\text{otherwise}.
\end{cases}
\end{gather*}
\end{definition}

Our aim is to show that $f$ is order reflecting. As preparation, we construct a version of~$E$ on the level of $T_2$.

\begin{definition}
For $t\in T_2$ we recursively define $\overline E(t)\subseteq T_2$ by the clause
\begin{equation*}
\overline E(t):=\begin{cases}
\{t\}\quad\text{if $t=0\star[i_0\star\tau_0,\ldots,i_{n-1}\star\tau_{n-1}]$ with $i_j=1$ for some~$j<n$},\\
\textstyle\bigcup_{j<n}\overline E(t_j)\quad\text{if $t=i\star[t_0,\ldots,t_{n-1}]$ is of a different form}.
\end{cases}
\end{equation*}
\end{definition}

Let us collect some basic properties:

\begin{lemma}\label{lem:E-bar}
(a) We have $\overline E(f(t))=f[E(t)]:=\{f(s)\,|\,s\in E(t)\}$.

(b) For any $s\in\overline E(t)$ we have $s\gap t$.

(c) If we have $f(\vartheta s)\gap t$, then we get $f(\vartheta s)\gap t'$ for some~$t'\in\overline E(t)$.
\end{lemma}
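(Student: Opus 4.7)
The three parts all proceed by structural induction, and they share a single key observation: every element produced by $\overline E$ falls under its \emph{first} defining clause, so it starts with root label $0$ and has at least one child with root label $1$. I would prove all three parts in sequence, keeping this observation in mind.

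For part~(a), I would induct on $t\in\vartheta(\varepsilon_{\Omega+1})$. The cases $t=\Omega$ and $t=\vartheta\alpha$ are just unpacking the definitions: $f(\Omega)=1\star[]$ falls under the second clause of $\overline E$ with no subtrees, giving $\emptyset=f[E(\Omega)]$, while $f(\vartheta\alpha)=0\star[1\star[f(\alpha)]]$ falls under the first clause, giving $\{f(\vartheta\alpha)\}=f[E(\vartheta\alpha)]$. The case $t=\langle\alpha_0,\ldots,\alpha_{n-1}\rangle$ is the delicate one: I have to show that $f(t)$ never triggers the first clause of $\overline E$, so that the recursion can be pushed down to the $f(\alpha_j)$'s and closed by the inductive hypothesis. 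If the root label of $f(t)$ is $0$, then either $n=0$ (no children at all) or $\alpha_0\prec\Omega$, in which case monotonicity gives $\alpha_j\preceq\alpha_0\prec\Omega$ for every $j$; using Exercise~\ref{ex:Bachmann-Howard}(d) to rule out $\alpha_j=\langle\beta_0,\ldots\rangle$ with $\beta_0\succeq\Omega$, no $f(\alpha_j)$ can start with label $1$, and the first clause fails. Hence $\overline E(f(t))=\bigcup_{j<n}\overline E(f(\alpha_j))=\bigcup_{j<n}f[E(\alpha_j)]=f[E(t)]$.

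For part~(b), I would induct on $t\in T_2$. If $t$ matches the first clause of $\overline E$, then $\overline E(t)=\{t\}$ and reflexivity of $\gap$ (from Exercise~\ref{ex:gap-partial-order}(d)) does the job. Otherwise $t=i\star[t_0,\ldots,t_{n-1}]$ and $\overline E(t)=\bigcup_j\overline E(t_j)$; the inductive hypothesis yields $s\gap t_j$ for some $j$, and clause~(iii) of Definition~\ref{def:gap-order} lifts this to $s\gap t$ provided $r(s)\leq i$. The shared observation from the first paragraph gives $r(s)=0\leq i$, closing the argument.

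Part~(c) is proved by induction on $t$ with a slightly more careful case split. If $t$ matches the first clause of $\overline E$, take $t':=t$. Otherwise $t=i\star[t_0,\ldots,t_{n-1}]$; the plan is to show that $f(\vartheta s)\gap t$ forces $f(\vartheta s)\gap t_j$ for some $j$, so the inductive hypothesis supplies the required $t'\in\overline E(t_j)\subseteq\overline E(t)$. If $i=1$, clause~(ii) of $\gap$ fails (root labels $0$ and $1$ differ), so clause~(iii) must be responsible and directly yields such a $t_j$. If $i=0$ then, since $\overline E$ did not hit the first clause, every child $t_j$ starts with label $0$; clause~(ii) of $\gap$ would demand $1\star[f(s)]\gap t_j$, but the root labels disagree and $r(1\star[f(s)])=1\not\leq 0$, so both clauses~(ii) and~(iii) of $\gap$ applied to $(1\star[f(s)],t_j)$ fail. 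Hence clause~(ii) for the pair $(f(\vartheta s),t)$ is impossible, and clause~(iii) supplies the desired $t_j$ (the degenerate possibility $n=0$ is excluded because then $f(\vartheta s)\not\gap 0\star[]$, so the hypothesis of~(c) cannot hold). The main obstacle I anticipate is the careful matching in part~(a) between the root label $i$ in Definition~\ref{def:emb-T2} and the clauses of $\overline E$; once that is done, parts~(b) and~(c) are structural consequences of the fact that every $\overline E$-element begins with label~$0$.
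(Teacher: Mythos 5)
Your proposal is correct and takes essentially the same approach as the paper: the central observation that every element of $\overline E(t)$ has root label~$0$, the label-matching argument in~(a) showing that a child $f(\alpha_j)$ with root label~$1$ would force $\Omega\preceq\alpha_j\preceq\alpha_0$ and hence a root label~$1$ on $f(t)$, and the treatment of clause~(ii) versus clause~(iii) of $\gap$ in~(c) all match the paper's proof. The only difference is cosmetic: in~(c) you organize the case split by which clause of $\overline E$ applies to $t$, whereas the paper splits on which clause of $\gap$ yields $f(\vartheta s)\gap t$; both resolve to the same observation that clause~(ii) forces some $t_j=1\star\tau_j$ and hence $\overline E(t)=\{t\}$.
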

\begin{proof}
(a) To conclude by a straightforward induction over~$t$, it suffices to show
\begin{equation*}
\overline E(f(\langle t_0,\ldots,t_{n-1}\rangle))=\textstyle\bigcup_{j<n}\overline E(f(t_j)).
\end{equation*}
This could only fail if we had $f(\langle t_0,\ldots,t_{n-1}\rangle):=0\star[f(t_0),\ldots,f(t_{n-1})]$ while some $f(t_j)$ was of the form~$1\star\tau_j$. One readily checks that this would give $\Omega\preceq t_j\preceq t_0$. But then the definition of~$f$ would yield a $1$ at the place of our~$0$.

(b) By the definition of~$\overline E$, any term $s\in\overline E(t)$ is of the form $0\star\sigma$, so that the condition $r(s)=0\leq n$ from part~(iii) of Definition~\ref{def:gap-order} is automatic. In view of this fact, the claim is readily checked by induction on~$t$.

(c) We argue by induction on~$t=i\star[t_0,\ldots,t_{n-1}]$. If we have $f(\vartheta s)\gap t_j$, then we inductively get $f(\vartheta s)\gap t'$ for some $t'\in\overline E(t_j)$. The claim follows because we have either $t\in\overline E(t)$ or $\overline E(t_j)\subseteq\overline E(t)$. Now assume that $f(\vartheta s)=0\star[1\star[f(s)]]\gap t$ holds because we have $i=0$ and $1\star[f(s)]\gap t_j$ for some~$j<n$. Here $t_j$ must be of the form $1\star\tau_j$, so that we get $\overline E(t)=\{t\}$, which makes the claim trivial.
\end{proof}

As promised, we now derive that $f$ is order reflecting.

\begin{theorem}\label{thm:emb-T2}
For any $\alpha,\beta\in\vartheta(\varepsilon_{\Omega+1})$ with $f(\alpha)\gap f(\beta)$ we get $\alpha\preceq\beta$.
\end{theorem}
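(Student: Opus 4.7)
I would prove Theorem~\ref{thm:emb-T2} by induction on $l(\alpha) + l(\beta)$, with case analysis on the top-level forms of $\alpha$ and $\beta$. The key tools are the three parts of Lemma~\ref{lem:E-bar}: combining (a) and (b) gives the ``transitivity'' step $f(\gamma) \gap f(\alpha)$ whenever $\gamma \in E(\alpha)$, while~(c) lets us replace an embedding $f(\vartheta s) \gap t$ by $f(\vartheta s) \gap f(\delta)$ for some $\delta \in E(t)$ (using~(a) to turn $\overline E$ back into $E$). Several cases dispose quickly: for $\alpha = \Omega$ the tree $f(\Omega) = 1\star[]$ can only embed into trees with root label~$1$, since the gap condition $r(s)\leq n$ blocks passage through any label-$0$ node; this forces $\beta = \Omega$ or $\beta = \langle\beta_0,\ldots\rangle$ with $\beta_0 \succeq \Omega$, and $\Omega \preceq \beta$ then follows from Exercise~\ref{ex:Bachmann-Howard}(d). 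Pairings like $\alpha = \vartheta\alpha'$, $\beta = \Omega$ are vacuous for the same root-label reason. When $\beta = \langle\beta_0,\ldots,\beta_{n-1}\rangle$, I unfold $\gap$ via Definition~\ref{def:gap-order}: clause~(ii) yields an injection between components, so Exercise~\ref{ex:Bachmann-Howard}(d) concludes; clause~(iii) reduces to $f(\alpha) \gap f(\beta_l)$, so the IH gives $\alpha \preceq \beta_l \preceq \beta_0 \prec \beta$.

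The heart of the argument is $\alpha = \vartheta\alpha'$, $\beta = \vartheta\beta'$. Expanding $f(\vartheta\alpha') \gap f(\vartheta\beta')$ via Definition~\ref{def:gap-order} leaves two possibilities for the outermost step. If it is clause~(iii), a short unfolding yields $f(\vartheta\alpha') \gap f(\beta')$; then~(c) together with~(a) produces $\delta \in E(\beta')$ with $f(\vartheta\alpha') \gap f(\delta)$, the IH gives $\vartheta\alpha' \preceq \delta$, and the third bullet of clause~(ii') turns $\vartheta\alpha' \preceq^* E(\beta')$ into $\vartheta\alpha' \prec \vartheta\beta'$. If it is clause~(ii) we obtain $1\star[f(\alpha')] \gap 1\star[f(\beta')]$, which I split once more. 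The ``(ii) inside (ii)'' subcase gives $f(\alpha') \gap f(\beta')$, hence $\alpha' \preceq \beta'$ by the IH; for each $\vartheta\gamma \in E(\alpha')$ the chain $f(\vartheta\gamma) \gap f(\alpha') \gap f(\beta')$ produced by Lemma~\ref{lem:E-bar}(a)--(b) feeds into~(c) and the IH to locate $\delta \in E(\beta')$ with $\vartheta\gamma \preceq \delta$; so $E(\alpha') \prec^* \vartheta\beta'$, and the second bullet of~(ii') concludes $\vartheta\alpha' \prec \vartheta\beta'$.

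The subtlest subsubcase is ``(iii) inside (ii)'', where $1\star[f(\alpha')] \gap f(\beta')$ and $f(\beta')$ must again have root label~$1$, forcing $\beta' = \langle\xi_0,\ldots\rangle$ with $\xi_0 \succeq \Omega$ ($\beta' = \Omega$ being vacuous). Iterating the clause-(iii) descent through the resulting label-$1$ chain of components---which terminates because $l$ strictly decreases---produces a nested component $\xi^\ast$ of $\beta'$ together with an embedding $f(\alpha') \gap f(\xi^\ast)$ coming from a final clause-(ii) step. Because every descent step moves from a $\langle\cdot\rangle$-term to one of its components, one checks $E(\xi^\ast) \subseteq E(\beta')$ and $\xi^\ast \prec \beta'$, after which the ``(b)$+$(c)$+$IH'' template of the previous subcase applies verbatim and delivers $\vartheta\alpha' \prec \vartheta\beta'$. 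Keeping this descent straight and verifying the $E$-containment is the main obstacle of the proof. Finally, the case $\alpha = \langle\alpha_0,\ldots\rangle$, $\beta = \vartheta\beta'$ is handled analogously: the root-label condition forces the leading label of $f(\alpha)$ to be~$0$; one reduces to $f(\alpha) \gap f(\beta')$ (or, in the $m=1$ branch, to $f(\alpha_0) \gap f(\beta')$), derives $E(\alpha_0) \prec^* \vartheta\beta'$ by the same template, and invokes Exercise~\ref{ex:Bachmann-Howard}(c) to upgrade this to $\alpha_0 \prec \vartheta\beta'$, whence clause~(iii') gives $\alpha \prec \vartheta\beta'$.
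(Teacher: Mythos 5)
Your proposal is correct, and its skeleton---induction on $l(\alpha)+l(\beta)$, case distinction on the outermost term constructors, and Lemma~\ref{lem:E-bar} as the engine---is the same as in the paper's proof. Two local choices genuinely differ. First, the paper proves an auxiliary implication $f(\alpha)\gap f(\beta)\Rightarrow l(\alpha)\leq l(\beta)$ simultaneously with the main claim, whose sole purpose is to upgrade the weak inequality $\gamma\preceq\beta$ delivered by the induction hypothesis to the strict $\gamma\prec\beta$ needed for $E(\alpha')\prec^*\beta$. You avoid this entirely: since every element of $E(\alpha')$ is a $\vartheta$-term, you route each $\vartheta\gamma\in E(\alpha')$ through Lemma~\ref{lem:E-bar}(c) to some $\delta\in E(\beta')$ and obtain the \emph{strict} inequality $\vartheta\gamma\prec\vartheta\beta'$ directly from the third bullet of clause~(ii'), at the cost of one extra application of the induction hypothesis to the pair $(\vartheta\gamma,\delta)$, which is legitimate since $l(\vartheta\gamma)\leq l(\alpha')<l(\alpha)$ and $l(\delta)\leq l(\beta')<l(\beta)$. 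This is a real simplification of the bookkeeping. Second, where you instead invest extra effort is the ``(iii) inside (ii)'' subcase: the paper disposes of it with the preparatory observation that $1\star[s_0,\ldots,s_{m-1}]\gap t$ already entails $s_j\gap t$ in $T_2$, so that both readings of $1\star[f(\alpha')]\gap 1\star[f(\beta')]$ collapse at once to $f(\alpha')\gap f(\beta')$, whereas you run an explicit descent through the label-$1$ components of $f(\beta')$ to locate $\xi^*$ with $f(\alpha')\gap f(\xi^*)$, $\xi^*\prec\beta'$ and $E(\xi^*)\subseteq E(\beta')$. Your descent is sound (termination by length; the two containments follow from Exercise~\ref{ex:Bachmann-Howard}(d) plus transitivity and from the definition of~$E$), but the paper's one-line induction on~$t$ buys the same conclusion more cheaply. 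One cosmetic caveat: your blanket treatment of $\beta=\langle\beta_0,\ldots,\beta_{n-1}\rangle$ (``clause~(ii) yields an injection, so Exercise~\ref{ex:Bachmann-Howard}(d) concludes'') only literally applies when $\alpha$ is itself a $\langle\cdot\rangle$-term; for $\alpha=\vartheta\alpha'$ with $i=0$ one must first rule out $1\star[f(\alpha')]\gap f(\beta_j)$ via the root labels, as the paper does---but this is exactly the kind of label argument you use elsewhere, so it is a presentational rather than a mathematical gap.
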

\begin{proof}
As preparation, we observe that $1\star[s_0,\ldots,s_{m-1}]\gap t$ in $T_2$ entails $s_j\gap t$ for all~$j<m$. To see this, write $t=i\star[t_0,\ldots,t_{n-1}]$ and note that the assumption forces~$1\leq i$. In $T_2$ we always have $r(s_j)\leq 1$, so that we get $s_j\gap t$ whenever we have~$s_j\gap t_k$. In view of this fact, the preparatory claim follows by a straightforward induction on~$t$. Also as preparation, we prove
\begin{equation*}
f(\alpha)\gap f(\beta)\quad\Rightarrow\quad l(\alpha)\leq l(\beta),
\end{equation*}
for the  length function that was specified after Definition~\ref{def:Bachmann-Howard}. Both this implication and the claim of the theorem are established by induction on $l(\alpha)+l(\beta)$ and a case distinction according to the forms of~$\alpha,\beta\in\vartheta(\varepsilon_{\Omega+1})$. For convenience, we present the proofs of $l(\alpha)\leq l(\beta)$ and $\alpha\preceq\beta$ at the same time, even though the inductive proof of $l(\alpha)\leq l(\beta)$ does officially come first. In the most interesting case, we have
\begin{equation*}
f(\alpha)=f(\vartheta\alpha')=0\star[1\star[f(\alpha')]]\gap 0\star[1\star[f(\beta')]]=f(\vartheta\beta')=f(\beta).
\end{equation*}
According to Definition~\ref{def:gap-order}, this inequality can hold for two reasons: First assume that we have $f(\alpha)\gap 1\star[f(\beta')]$, which forces $f(\alpha)\gap f(\beta')$ due to the root labels. Inductively, we already get the claim $l(\alpha)\leq l(\beta')<l(\beta)$ about lengths. Furthermore, Lemma~\ref{lem:E-bar} yields $f(\alpha)\gap r$ for some term $r\in\overline E(f(\beta'))=f[E(\beta')]$, which we can thus write as $r=f(\gamma)$ with $\gamma\in E(\beta')$. As before we have $l(\gamma)\leq l(\beta')$, so that the induction hypothesis gives $\alpha\preceq\gamma$. We have thus established $\alpha=\vartheta\alpha'\preceq^* E(\beta')$, which yields $\alpha\prec\vartheta\beta'=\beta$ by Definition~\ref{def:Bachmann-Howard} (see also Exercise~\ref{ex:Bachmann-Howard}). We now assume that $f(\vartheta\alpha')\gap f(\vartheta\beta')$ holds because we have $1\star[f(\alpha')]\gap 1\star[f(\beta')]$. A priori, the latter can be due to $f(\alpha')\gap f(\beta')$ or to $1\star[f(\alpha')]\gap f(\beta')$. By the observation at the beginning of the proof, however, we get $f(\alpha')\gap f(\beta')$ in any case. We can conclude $l(\alpha)\leq l(\beta)$ as well as $\alpha'\preceq\beta'$. If the latter is an equality, so is~$\alpha\preceq\beta$. Now assume that we have $\alpha'\prec\beta'$. To get $\alpha\prec\beta$, we show $E(\alpha')\prec^*\beta$. For any $\gamma\in E(\alpha')$, Lemma~\ref{lem:E-bar} yields $f(\gamma)\in f[E(\alpha')]=\overline E(f(\alpha'))$ and then $f(\gamma)\gap f(\alpha')\gap f(\beta')$. Inductively, we obtain $l(\gamma)\leq l(\beta')<l(\beta)$, which means that $\gamma$ and $\beta$ are different. Thus it suffice to show $\gamma\preceq\beta$ in order to get $\gamma\prec\beta$ (which is the only point of the auxiliary claim about lengths). From $f(\gamma)\gap f(\beta')$ we get $f(\gamma)\gap f(\beta)$, since $f(\gamma)\in E(f(\alpha'))$ is of the form~$0\star\rho$ (cf.~the proof of Lemma~\ref{lem:E-bar}). We can now infer $\gamma\preceq\beta$ by the induction hypothesis. Let us also consider an inequality
\begin{multline*}
f(\alpha)=f(\vartheta\alpha')=0\star[1\star[f(\alpha')]]\gap\\ i\star[f(\beta_0),\ldots,f(\beta_{n-1})]=f(\langle\beta_0,\ldots,\beta_{n-1}\rangle)=f(\beta).
\end{multline*}
It is straightforward to deduce $l(\alpha)\leq l(\beta)$ from the induction hypothesis, given that $1\star[f(\alpha')]\gap f(\beta_j)$ entails $f(\alpha')\gap f(\beta_j)$. When we have $i=1$, the definition of~$f$ and Exercise~\ref{ex:Bachmann-Howard}(d) yield $\alpha\prec\Omega\preceq\beta_0\prec\beta$. Now assume that we have $i=0$ and hence~$\beta_{n-1}\preceq\ldots\preceq\beta_0\prec\Omega$. Then $f(\beta_j)$ is of the form $0\star\tau_j$, as in the proof of Lemma~\ref{lem:E-bar}(a). This entails $1\star[f(\alpha')]\not\gap f(\beta_j)$, so that we must have $f(\alpha)\gap f(\beta_j)$ for some~$j<n$. We can conclude $\alpha\preceq\beta_j\preceq\beta_0\prec\beta$ by the induction hypothesis and Exercise~\ref{ex:Bachmann-Howard}. Finally, we consider the case of an inequality
\begin{multline*}
f(\alpha)=f(\langle\alpha_0,\ldots,\alpha_{m-1}\rangle)=i\star[f(\alpha_0),\ldots,f(\alpha_{m-1})]\gap\\ 0\star[1\star[f(\beta')]]=f(\vartheta\beta')=f(\beta).
\end{multline*}
The latter can only hold if we have~$i=0$ and thus $\alpha_0\prec\Omega$, which entails $\alpha\prec\Omega$. To get $\alpha\prec\vartheta\beta'=\beta$ via Exercise~\ref{ex:Bachmann-Howard}(c), it suffices to show that we have $E(\alpha_0)\prec^*\beta$. Given $\gamma\in E(\alpha_0)$, we first observe $l(\gamma)\leq l(\alpha_0)<l(\alpha)\leq l(\beta)$. As above, this means that we need only show $\gamma\preceq\beta$ rather than $\gamma\prec\beta$. Once again, we invoke Lemma~\ref{lem:E-bar} to get $f(\gamma)\in f[E(\alpha)]=\overline E(f(\alpha))$ and then $f(\gamma)\gap f(\alpha)\gap f(\beta)$. Now the induction hypothesis yields $\gamma\preceq\beta$. The remaining cases are similar and easier.
\end{proof}

As indicated above, we can draw the following conclusion.

\begin{corollary}\label{cor:bachmann-howard-wo}
The order $\vartheta(\varepsilon_{\Omega+1})$ is well founded.
\end{corollary}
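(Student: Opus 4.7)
The proof is essentially a one-step reduction: the order-reflecting map $f$ constructed in Definition~\ref{def:emb-T2} and justified by Theorem~\ref{thm:emb-T2} transports any hypothetical descending sequence in $\vartheta(\varepsilon_{\Omega+1})$ into an infinite bad sequence in $T_2(\emptyset)$. Since Theorem~\ref{thm:EKT}, applied with $N=2$ and with $X=\emptyset$ (which is trivially a well partial order), tells us that $T_2(\emptyset)$ is a well partial order, no such bad sequence can exist, so no such descending sequence exists either.

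More concretely, I would argue by contradiction. Assume there is an infinite strictly descending sequence $\alpha_0\succ\alpha_1\succ\cdots$ in $\vartheta(\varepsilon_{\Omega+1})$. By transitivity of $\prec$ (part of Exercise~\ref{ex:Bachmann-Howard}(b), which establishes that $\prec$ is a linear order) we obtain $\alpha_i\succ\alpha_j$ for all $i<j$. Linearity, combined with irreflexivity, then gives $\alpha_i\not\preceq\alpha_j$ for $i<j$. The contrapositive of Theorem~\ref{thm:emb-T2} yields $f(\alpha_i)\not\gap f(\alpha_j)$, so that $f(\alpha_0),f(\alpha_1),\ldots$ is an infinite bad sequence in $T_2(\emptyset)$, contradicting Theorem~\ref{thm:EKT}.

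There is no real obstacle here, since the heavy lifting has already been done in Theorem~\ref{thm:EKT} and Theorem~\ref{thm:emb-T2}. The only logical point worth emphasising is the standard observation that for a linear order, well-foundedness is equivalent to the absence of infinite bad sequences, so that an order-reflecting map into a well partial order is enough to conclude. Alternatively, one could phrase the argument directly in terms of descending sequences, noting that the image under $f$ of a descending sequence is, by Theorem~\ref{thm:emb-T2}, a sequence in which no earlier element $\gap$-embeds into a later one.
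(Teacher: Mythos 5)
Your proof is correct and follows essentially the same route as the paper: both reduce well-foundedness of $\vartheta(\varepsilon_{\Omega+1})$ to Theorem~\ref{thm:EKT} for $T_2(\emptyset)$ via the order-reflecting map $f$ of Theorem~\ref{thm:emb-T2}. The paper merely states the argument in the contrapositive direction (any sequence admits $i<j$ with $f(\alpha_i)\gap f(\alpha_j)$, hence $\alpha_i\preceq\alpha_j$, hence is not strictly descending), which is the same content as your proof by contradiction.
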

\begin{proof}
Given an infinite sequence $\alpha_0,\alpha_1,\ldots$ in $\vartheta(\varepsilon_{\Omega+1})$, we obtain another sequence $f(\alpha_0),f(\alpha_1),\ldots$ in $T_2=T_2(\emptyset)$. The latter is a well partial order by Theorem~\ref{thm:EKT}. Hence we find $i<j$ with $f(\alpha_i)\gap f(\alpha_j)$. By the previous theorem we get~$\alpha_i\preceq\alpha_j$, so that the original sequence in $\vartheta(\varepsilon_{\Omega+1})$ is not strictly descending.
\end{proof}

In the following, some aspects of the set theoretic construction of~$\vartheta(\varepsilon_{\Omega+1})$ are recovered on a syntactical level.

\begin{definition}\label{def:coeff-sets}
For $\alpha,\beta\in\vartheta(\varepsilon_{\Omega+1})$, we declare that $C_\alpha(\beta)\subseteq\vartheta(\varepsilon_{\Omega+1})$ is generated by the following recursive clauses:
\begin{enumerate}[label=(\roman*)]
\item we have $\Omega\in C_\alpha(\beta)$ as well as $\gamma\in C_\alpha(\beta)$ for all $\gamma\prec\beta$,
\item given $\gamma\in C_\alpha(\beta)$ with $\gamma\prec\alpha$, we get $\vartheta\gamma\in C_\alpha(\beta)$,
\item we get $\langle\gamma_0,\ldots,\gamma_{n-1}\rangle\in C_\alpha(\beta)$ whenever we have $\gamma_i\in C_\alpha(\beta)$ for all~$i<n$.
\end{enumerate}
\end{definition}

Let us stress that the definition of $C_\alpha(\beta)$ refers to $\vartheta\gamma$ for $\gamma\prec\alpha$ only. Conversely, the following proposition shows that the sets $C_\alpha(\beta)$ determine~$\vartheta\alpha$. Working in set theory, one can exploit these observations to construct (sets of) ordinals~$C_\alpha(\beta)$ and~$\vartheta\alpha$ by simultaneous recursion. In order to find a $\gamma=\sup_{n\prec\omega}\gamma_n\prec\Omega$ as in the minimum below, one would first ensure $E(\alpha)\prec^*\gamma_0$ to get $\alpha\in C_\alpha(\gamma_0)$. One would then choose $\gamma_{n+1}\prec\Omega$ with $C_\alpha(\gamma_n)\cap\Omega\prec^*\gamma_{n+1}$, which is possible when $\Omega$ is the first uncountable cardinal, as $C_\alpha(\gamma_n)$ will be countable. We refer to~\cite{rathjen-weiermann-kruskal} for full details of the set theoretic construction, which will not be needed in the following.

\begin{proposition}\label{prop:vartheta-set-def}
We have
\begin{gather*}
\gamma\in C_\alpha(\beta)\quad\Leftrightarrow\quad E(\gamma)\subseteq C_\alpha(\beta),\\
\vartheta\alpha=\min\{\gamma\in\vartheta(\varepsilon_{\Omega+1})\,|\,C_\alpha(\gamma)\cap\Omega\prec^*\gamma\text{ and }\alpha\in C_\alpha(\gamma)\},
\end{gather*}
where $C_\alpha(\gamma)\cap\Omega\prec^*\gamma$ asserts that $\delta\prec\gamma$ holds for any $\delta\in C_\alpha(\gamma)$ with $\delta\prec\Omega$.
\end{proposition}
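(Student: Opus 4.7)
The plan is to prove the proposition in three stages: first the equivalence, then that $\vartheta\alpha$ belongs to the set on the right-hand side, then its minimality. For the equivalence, the direction $\gamma \in C_\alpha(\beta) \Rightarrow E(\gamma) \subseteq C_\alpha(\beta)$ goes by induction on the generation of $\gamma \in C_\alpha(\beta)$ in Definition~\ref{def:coeff-sets}. Clauses (ii) and (iii) are immediate from the definition of $E$. The only nontrivial case is clause~(i) with $\gamma \prec \beta$: I would first observe, by a quick induction on the term $\gamma$, that every $\delta \in E(\gamma)$ satisfies $\delta \preceq \gamma$, using Exercise~\ref{ex:Bachmann-Howard}(d) together with the built-in ordering $\gamma_i \preceq \gamma_0$ for tuples; then $\delta \prec \beta$ and $\delta$ re-enters $C_\alpha(\beta)$ via the same clause. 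For the converse I would induct on $l(\gamma)$: the cases $\gamma = \Omega$ and $\gamma = \vartheta\gamma'$ are immediate from clause~(i) of Definition~\ref{def:coeff-sets} and the equation $E(\vartheta\gamma') = \{\vartheta\gamma'\}$; for $\gamma = \langle\gamma_0, \ldots, \gamma_{n-1}\rangle$ the inductive hypothesis applied to each $\gamma_i$ (noting $E(\gamma_i) \subseteq E(\gamma) \subseteq C_\alpha(\beta)$) places $\gamma_i \in C_\alpha(\beta)$, and clause~(iii) assembles $\gamma$.

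Next, to see that $\vartheta\alpha$ lies in the set, the equivalence reduces $\alpha \in C_\alpha(\vartheta\alpha)$ to $E(\alpha) \subseteq C_\alpha(\vartheta\alpha)$, which follows from $E(\alpha) \prec^* \vartheta\alpha$ (Exercise~\ref{ex:Bachmann-Howard}(a)) and clause~(i). For the bound $C_\alpha(\vartheta\alpha) \cap \Omega \prec^* \vartheta\alpha$, I would prove the stronger claim that every $\delta \in C_\alpha(\vartheta\alpha)$ satisfies $\delta \prec \vartheta\alpha$ or $\Omega \preceq \delta$, by induction on $l(\delta)$. The decisive case is $\delta = \vartheta\delta'$ produced by clause~(ii), so that $\delta' \prec \alpha$ and $\delta' \in C_\alpha(\vartheta\alpha)$; the equivalence then supplies $E(\delta') \subseteq C_\alpha(\vartheta\alpha)$, and each element of $E(\delta')$ has strictly smaller length than $\delta$ and is $\prec \Omega$ as a $\vartheta$-term, so by induction each is $\prec \vartheta\alpha$. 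This verifies the side condition of clause~(ii') of Definition~\ref{def:Bachmann-Howard} and yields $\vartheta\delta' \prec \vartheta\alpha$. Tuples are handled by applying the inductive hypothesis componentwise and invoking clauses~(i') and (iii') of Definition~\ref{def:Bachmann-Howard} according to whether $\Omega \preceq \delta_0$ or $\delta_0 \prec \vartheta\alpha$; the cases $\delta = \Omega$ and $\delta$ entering via clause~(i) are trivial.

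Finally, I would prove minimality by contradiction: assume $\gamma \prec \vartheta\alpha$ satisfies both conditions, so $\gamma \prec \Omega$. The strategy is to derive $\gamma \in C_\alpha(\gamma)$, which together with $\gamma \prec \Omega$ and $C_\alpha(\gamma) \cap \Omega \prec^* \gamma$ forces the contradictory $\gamma \prec \gamma$. If $\gamma = \langle\gamma_0, \ldots, \gamma_{n-1}\rangle$ then Exercise~\ref{ex:Bachmann-Howard}(d) combined with $\gamma_i \preceq \gamma_0$ gives $\gamma_i \prec \gamma$, so each $\gamma_i$ enters $C_\alpha(\gamma)$ by clause~(i) and clause~(iii) assembles $\gamma$ (the case $n = 0$ uses the empty-tuple instance of clause~(iii) directly). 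If $\gamma = \vartheta\gamma'$, clause~(ii') of Definition~\ref{def:Bachmann-Howard} offers two alternatives for $\gamma \prec \vartheta\alpha$. The alternative $\vartheta\gamma' \preceq^* E(\alpha)$ is immediately excluded: from $\alpha \in C_\alpha(\gamma)$ the equivalence gives $E(\alpha) \subseteq C_\alpha(\gamma) \cap \Omega$, hence $E(\alpha) \prec^* \gamma$, so $\vartheta\gamma' = \gamma \preceq \delta \prec \gamma$ for some $\delta \in E(\alpha)$, absurd. In the alternative $\gamma' \prec \alpha$, Exercise~\ref{ex:Bachmann-Howard}(a) yields $E(\gamma') \prec^* \vartheta\gamma' = \gamma$, so $E(\gamma') \subseteq C_\alpha(\gamma)$ by clause~(i); the equivalence then places $\gamma' \in C_\alpha(\gamma)$, and clause~(ii) of Definition~\ref{def:coeff-sets} delivers $\gamma = \vartheta\gamma' \in C_\alpha(\gamma)$ as required. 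The main obstacle throughout is the self-reference of the condition $C_\alpha(\vartheta\alpha) \cap \Omega \prec^* \vartheta\alpha$; rather than inducting on the generation of $C_\alpha(\vartheta\alpha)$, which would leave $E(\delta)$ inaccessible when analysing $\delta = \vartheta\delta'$, I use induction on $l$, which grants access to $\delta$ and all of $E(\delta)$ simultaneously.
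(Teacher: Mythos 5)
Your proof is correct. The equivalence and the verification that $\vartheta\alpha$ belongs to the set proceed exactly as in the paper: the same auxiliary observation that $\delta\preceq\gamma$ holds for $\delta\in E(\gamma)$, the same appeal to $E(\alpha)\prec^*\vartheta\alpha$ from Exercise~\ref{ex:Bachmann-Howard}(a), and the same induction on $l(\delta)$ for the bound $C_\alpha(\vartheta\alpha)\cap\Omega\prec^*\vartheta\alpha$, with the crucial case $\delta=\vartheta\delta'$ handled identically. Where you genuinely diverge is in the minimality argument. The paper establishes the general implication that $\delta\prec\vartheta\alpha$ entails $\delta\in C_\alpha(\gamma)$ for \emph{all} $\delta$, by a second induction on $l(\delta)$, and then reads off $\vartheta\alpha\preceq\gamma$. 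You instead fix a putative counterexample $\gamma\prec\vartheta\alpha$ and show directly that $\gamma\in C_\alpha(\gamma)$, exploiting the self-referential choice of second argument: clause~(i) of Definition~\ref{def:coeff-sets} already absorbs everything strictly below $\gamma$, so for a tuple the components $\gamma_i\preceq\gamma_0\prec\gamma$ enter by clause~(i) and clause~(iii) reassembles $\gamma$, while for $\gamma=\vartheta\gamma'$ the bound $E(\gamma')\prec^*\vartheta\gamma'$ from Exercise~\ref{ex:Bachmann-Howard}(a) substitutes for the paper's length induction, and the alternative $\vartheta\gamma'\preceq^*E(\alpha)$ collapses to $\gamma\preceq\delta\prec\gamma$. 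This is a genuine simplification: no further induction is needed beyond the already-established equivalence. What the paper's longer route buys is the stronger, potentially reusable fact that $C_\alpha(\gamma)$ contains the entire initial segment determined by $\vartheta\alpha$; your argument certifies membership only of the single element $\gamma$, which is all the proposition actually requires.
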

\begin{proof}
In the equivalence, the implication from right to left can be verified by a straightforward induction on the term~$\gamma$ (recall in particular $E(\vartheta\gamma')=\{\vartheta\gamma'\}$). For the other direction, we argue by induction over the recursive definition of~$C_\alpha(\beta)$. To cover the case where $\gamma\in C_\alpha(\beta)$ is due to~$\gamma\prec\beta$, we note that $\delta\preceq\gamma$ holds for any~$\delta\in E(\gamma)$. This is readily checked by induction on the term~$\gamma$, based on part~(d) of Exercise~\ref{ex:Bachmann-Howard}. Next, we show that $\vartheta\alpha$ is one of the $\gamma$ over which the minimum is taken. According to part~(a) of the cited exercise we have $E(\alpha)\prec^*\vartheta\alpha$, so that clause~(i) of Definition~\ref{def:coeff-sets} yields $E(\alpha)\subseteq C_\alpha(\vartheta\alpha)$. We now get $\alpha\in C_\alpha(\vartheta\alpha)$ by the equivalence that we have just proved. In order to show
\begin{equation*}
\delta\in C_\alpha(\vartheta\alpha)\cap\Omega\quad\Rightarrow\quad\delta\prec\vartheta\alpha,
\end{equation*}
we use induction over the length~$l(\delta)$. Consider the crucial case of a term $\delta=\vartheta\delta'$. If $\delta\in C_\alpha(\vartheta\alpha)$ holds by clause~(i) of Definition~\ref{def:coeff-sets}, then $\delta\prec\vartheta\alpha$ is immediate. Otherwise clause~(ii) applies, which means that we have $\delta'\in C_\alpha(\vartheta\alpha)$ and $\delta'\prec\alpha$. The former entails $E(\delta')\subseteq C_\alpha(\vartheta\alpha)$, so that we obtain $E(\delta')\prec^*\vartheta\alpha$ by the induction hypothesis. Together with $\delta'\prec\alpha$ this yields $\delta=\vartheta\delta'\prec\vartheta\alpha$, again by part~(a) of our exercise. Finally, we show that $\vartheta\alpha\preceq\gamma$ holds for arbitrary $\gamma$ with $C_\alpha(\gamma)\cap\Omega\prec^*\gamma$ and $\alpha\in C_\alpha(\gamma)$. It is enough to prove
\begin{equation*}
\delta\prec\vartheta\alpha\quad\Rightarrow\quad\delta\in C_\alpha(\gamma).
\end{equation*}
We argue by induction over~$l(\delta)$ and consider the crucial case of a term~$\delta=\vartheta\delta'$. First assume that $\vartheta\delta'\prec\vartheta\alpha$ holds because we have~$\delta'\prec\alpha$ and $E(\delta')\prec^*\vartheta\alpha$. Inductively, we learn that $C_\alpha(\gamma)$ contains the elements of~$E(\delta')$, hence $\delta'$ itself and thus also~$\vartheta\delta'$. In the remaining case, we have $\delta=\vartheta\delta'\prec\vartheta\alpha$ due to $\delta\preceq^*E(\alpha)$. Here we argue that $\alpha\in C_\alpha(\gamma)$ entails $E(\alpha)\subseteq C_\alpha(\gamma)\cap\Omega\prec^*\gamma$, so that transitivity yields $\delta\prec\gamma$ and in particular $\delta\in C_\alpha(\gamma)$.
\end{proof}

To conclude this section, we present certain closure operators~$\mathcal H_\gamma$ that were first considered by Wilfried Buchholz~\cite{buchholz-local-predicativity}. They allow for a particularly elegant ordinal analysis of impredicative axiom systems, as we will see in Sections~\ref{sect:operator-control} and~\ref{sect:embedding}.

\begin{definition}\label{def:H_gamma}
For each element $\alpha$ and each subset $X$ of $\vartheta(\varepsilon_{\Omega+1})$, we put
\begin{equation*}
\mathcal H_\alpha(X):=\textstyle\bigcap\left\{C_\gamma(\delta)\,\left|\,\gamma,\delta\in\vartheta(\varepsilon_{\Omega+1})\text{ with }\alpha\prec\gamma\text{ and }X\subseteq C_\gamma(\delta)\right.\right\}\subseteq\vartheta(\varepsilon_{\Omega+1}),
\end{equation*}
with $\mathcal H_\alpha(X)=\vartheta(\varepsilon_{\Omega+1})$ when the index set of the intersection is empty.
\end{definition}

Let us record some fundamental properties (cf.~\cite[Lemma~4.7]{buchholz-local-predicativity}):

\begin{proposition}\label{prop:H-basic}
(a) Given $\alpha\prec\beta$, we get $\mathcal H_\alpha(X)\subseteq\mathcal H_\beta(X)$ for all~$X\subseteq\vartheta(\varepsilon_{\Omega+1})$.

(b) From $\alpha\in\mathcal H_\beta(X)$ and $\alpha\preceq\beta$, we can infer $\vartheta\alpha\in\mathcal H_\beta(X)$.

(c) If we have $X\subseteq\bigcap\{C_\gamma(\vartheta\gamma)\,|\,\alpha\prec\gamma\}$, then we obtain
\begin{equation*}
\alpha\preceq\beta\prec\gamma\text{ and }\beta\in\mathcal H_\alpha(X)\quad\Rightarrow\quad\vartheta\beta\prec\vartheta\gamma.
\end{equation*}
\end{proposition}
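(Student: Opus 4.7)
The plan is to unpack Definition~\ref{def:H_gamma} in all three parts and use the three closure clauses of Definition~\ref{def:coeff-sets}, together with the explicit description of $\vartheta\gamma$ as a minimum provided by Proposition~\ref{prop:vartheta-set-def}.

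For part~(a), I would observe that if $\alpha\prec\beta$ then every pair $(\gamma,\delta)$ with $\beta\prec\gamma$ and $X\subseteq C_\gamma(\delta)$ also satisfies $\alpha\prec\gamma$, so the index set of the intersection defining $\mathcal H_\beta(X)$ is a subset of the one defining $\mathcal H_\alpha(X)$. Intersecting over a smaller family yields a larger set, so $\mathcal H_\alpha(X)\subseteq\mathcal H_\beta(X)$. The degenerate case where an index set is empty is absorbed by the convention $\mathcal H_\bullet(X)=\vartheta(\varepsilon_{\Omega+1})$ stated in the definition.

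For part~(b), I would argue pointwise against the intersection defining $\mathcal H_\beta(X)$. Fix $(\gamma,\delta)$ with $\beta\prec\gamma$ and $X\subseteq C_\gamma(\delta)$. The hypothesis $\alpha\in\mathcal H_\beta(X)$ gives $\alpha\in C_\gamma(\delta)$, while $\alpha\preceq\beta\prec\gamma$ gives $\alpha\prec\gamma$. Clause~(ii) of Definition~\ref{def:coeff-sets} then furnishes $\vartheta\alpha\in C_\gamma(\delta)$, and intersecting over all admissible $(\gamma,\delta)$ yields $\vartheta\alpha\in\mathcal H_\beta(X)$.

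For part~(c), the key move is to recognise that, under the hypothesis on $X$, the specific pair $(\gamma,\vartheta\gamma)$ is among those indexing the intersection defining $\mathcal H_\alpha(X)$: we are given $\alpha\prec\gamma$, and $X\subseteq C_\gamma(\vartheta\gamma)$ is exactly one of the inclusions that were assumed. Hence $\beta\in\mathcal H_\alpha(X)\subseteq C_\gamma(\vartheta\gamma)$. Since in addition $\beta\prec\gamma$, clause~(ii) of Definition~\ref{def:coeff-sets} promotes this to $\vartheta\beta\in C_\gamma(\vartheta\gamma)$. To convert membership into a strict inequality, I would invoke Proposition~\ref{prop:vartheta-set-def}: because $\vartheta\gamma$ itself belongs to the set over which the minimum in that proposition is taken (instantiated at $\alpha:=\gamma$), we have $C_\gamma(\vartheta\gamma)\cap\Omega\prec^*\vartheta\gamma$. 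A quick check of clause~(ii') of Definition~\ref{def:Bachmann-Howard} shows that every term of the form $\vartheta\delta'$ satisfies $\vartheta\delta'\prec\Omega$, so $\vartheta\beta\in C_\gamma(\vartheta\gamma)\cap\Omega$ and therefore $\vartheta\beta\prec\vartheta\gamma$.

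The main obstacle, insofar as there is one, lies in part~(c): one must recall the minimum characterisation of $\vartheta\gamma$ and the elementary fact $\vartheta\beta\prec\Omega$. Once these ingredients are at hand the three parts are essentially bookkeeping around the intersection defining $\mathcal H_\alpha(X)$ and the closure clauses for $C_\alpha(\beta)$.
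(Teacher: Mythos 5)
Your proof is correct and takes essentially the same route as the paper's: parts (a) and (b) are exactly the intended arguments, and in (c) you apply clause~(ii) of Definition~\ref{def:coeff-sets} directly inside $C_\gamma(\vartheta\gamma)$ where the paper instead reuses parts (a) and (b) to get $\vartheta\beta\in\mathcal H_\beta(X)\subseteq C_\gamma(\vartheta\gamma)$ --- a purely cosmetic difference. Both arguments then conclude identically via $\vartheta\beta\prec\Omega$ and the minimum characterisation of $\vartheta\gamma$ in Proposition~\ref{prop:vartheta-set-def}.
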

\begin{proof}
(a) As $\beta\prec\gamma$ implies $\alpha\prec\gamma$, the intersection for~$\mathcal H_\alpha(X)$ is taken over a larger family of sets $C_\gamma(\delta)$.

(b) It suffices to show that $\vartheta\alpha$ lies in all sets $C_\gamma(\delta)$ from the intersection that yields~$\mathcal H_\beta(X)$. For these we have $\alpha\preceq\beta\prec\gamma$. By clause~(ii) of Definition~\ref{def:coeff-sets}, the claim is thus reduced to $\alpha\in C_\gamma(\delta)$, which follows from~$\alpha\in\mathcal H_\beta(X)$.

(c) By~(a) and~(b) we get $\beta\in\mathcal H_\alpha(X)\subseteq\mathcal H_\beta(X)$ and then $\vartheta\beta\in\mathcal H_\beta(X)\subseteq C_\gamma(\vartheta\gamma)$. In view of~$\vartheta\beta\prec\Omega$, we can invoke Proposition~\ref{prop:vartheta-set-def} to conclude~$\vartheta\beta\prec\vartheta\gamma$.
\end{proof}

We will later see that any $\mathcal H_\alpha(X)$ that contains~$\alpha$ is closed under an order preserving function $\beta\mapsto\alpha+\omega(\beta)$ with $\alpha\prec\alpha+\omega(\beta)$. For $X\subseteq\bigcap\{C_\gamma(\vartheta\gamma)\,|\,\alpha\prec\gamma\}$ as in the proposition above, this will yield a map
\begin{equation*}
\mathcal H_\alpha(X)\ni\beta\mapsto\vartheta(\alpha+\omega(\beta))\in\vartheta(\varepsilon_{\Omega+1})\cap\Omega
\end{equation*}
that is order preserving as well, even though $\mathcal H_\alpha(X)$ contains elements above~$\Omega$. We will use this map to show that certain uncountable proofs can be collapsed into countable ones, as long as the proofs do only involve ordinals that are `controlled' by the closure operators $\mathcal H_\gamma$. Details are provided in Sections~\ref{sect:operator-control} and~\ref{sect:embedding} below.

\section{Inductive definitions and $\Pi^1_1$-comprehension}\label{sect:ind-def}

In this section, we show that $\pica^-$ is conservative over a certain theory $\id_1$ of non-iterated inductive definitions. Several analogous but stronger results have been proved by Solomon Feferman~\cite{feferman70}. Inductive definitions are particularly ameanable to methods of ordinal analysis. For this reason, they have played an important role in the development of the subject, as witnessed by the seminal paper of William Howard~\cite{howard72} and the book by Wilfried Buchholz, Solomon Feferman, Wolfram Pohlers and Wilfried Sieg~\cite{bfps-inductive}. We note that inductive definitions have limitations when it comes to very strong axiom systems. These are often analyzed via systems of set theory, an approach that has been pioneered by Gerhard J\"ager~\cite{jaeger-KPN,jaeger-kripke-platek} (see, e.\,g.,~\cite{rathjen-icm} for a survey of subsequent developments). Specifically, Kripke-Platek set theory offers another elegant way to analyze the theory $\pica^-$ (see~\cite[Chapter~11]{pohlers-proof-theory}). We work with inductive definitions because they have fewer set theoretic prerequisites.

Let us write $\mathcal P(\mathbb N)$ for the powerset of the natural numbers. By an operator we shall mean a map $\Phi:\mathcal P(\mathbb N)\to\mathcal P(\mathbb N)$ that is monotone in the sense that $X\subseteq Y\subseteq\mathbb N$ entails $\Phi(X)\subseteq\Phi(Y)$. For each operator we define
\begin{equation*}
I_\Phi:=\bigcap\{X\subseteq\mathbb N\,|\,\Phi(X)\subseteq X\}.
\end{equation*}
The following exercise shows that our operators provide a reasonable formalization of inductive definitions. Our focus on natural numbers is somewhat arbitrary but inessential, because other finite objects (in particular tuples) can be accommodated via coding. To avoid confusion, we note that the general operators of the present section and the specific closure operators~$\mathcal H_\gamma$ from the previous one are two different notions that should be kept firmly apart (despite some obvious connections).

\begin{exercise}\label{ex:ind-def}
(a) Show that $I_\Phi$ is the least fixed point of the operator~$\Phi$, i.\,e., that we have $\Phi(I_\Phi)=I_\Phi$ and that $I_\Phi\subseteq X$ follows from $\Phi(X)\subseteq X$ (hence in particular from $\Phi(X)=X$). Also observe that a map $\Phi:\mathcal P(\mathbb N)\to\mathcal P(\mathbb N)$ that is not monotone does not need to have any fixed points.

(b) Given a set $\mathcal A\subseteq\mathcal P(\mathbb N)\times\mathbb N$, we define $\Phi[\mathcal A]:\mathcal P(\mathbb N)\to\mathcal P(\mathbb N)$ by
\begin{equation*}
\Phi[\mathcal A](X):=\{x\in\mathbb N\,|\,(A,x)\in\mathcal A\text{ for some }A\subseteq X\}.
\end{equation*}
Note that $\Phi[\mathcal A]$ is monotone. Then show that $I_{\Phi[\mathcal A]}$ is the least set with the property that $A\subseteq I_{\Phi[\mathcal A]}$ and $(A,x)\in\mathcal A$ entail $x\in I_{\Phi[\mathcal A]}$. Given any operator~$\Phi$, we put
\begin{equation*}
\mathcal A_\Phi:=\{(A,x)\in\mathcal P(\mathbb N)\times\mathbb N\,|\,x\in\Phi(A)\}.
\end{equation*}
Show that the operations are inverse in the sense that have $\Phi[\mathcal A_\Phi]=\Phi$ and
\begin{equation*}
\mathcal A_{\Phi[\mathcal A]}=\overline{\mathcal A}:=\{(A,x)\in \mathcal P(\mathbb N)\times\mathbb N\,|\,(A_0,x)\in\mathcal A\text{ for some }A_0\subseteq A\}.
\end{equation*}
Note that we have $\Phi(\overline{\mathcal A})=\Phi(\mathcal A)$ and that $\mathcal A=\overline{\mathcal A_0}$ entails $\overline{\mathcal A}=\mathcal A$, in which case we get $\mathcal A_{\Phi[\mathcal A]}=\mathcal A$.
 \emph{Remark:} The point is that we may view~$\mathcal A$ as a set of `clauses', so that we get a more familiar formulation of inductive definitions. We note that part~(b) of the exercise is taken from Section~6.1 of the textbook by Pohlers~\cite{pohlers-proof-theory}. In part~(d) below, we will say that $\Phi$ is finitary if it can be written as $\Phi[\mathcal A]$ for an $\mathcal A$ that does only contain pairs $(A,x)$ such that~$A$ is finite.

(c) Given a linear order $\tl$ on~$\mathbb N$, define an operator~$\Phi$ such that $I_\Phi$ is the well founded part of~$\tl$. More explicitly, we demand that $I_\Phi$ is the largest initial segment ($x\tl y\in I_\Phi$ implies $x\in I_\Phi$) on which $\tl$ is well founded.

(d) For an operator~$\Phi$, we use recursion along the ordinals to define
\begin{equation*}
I_\Phi^\alpha:=\Phi(I_\Phi^{<\alpha})\quad\text{with}\quad I_\Phi^{<\alpha}:=\textstyle\bigcup_{\gamma<\alpha} I^\gamma_\Phi.
\end{equation*}
Prove that $\alpha<\beta$ entails $I^\alpha_\Phi\subseteq I^\beta_\Phi$. Conclude that we have $I^\alpha_\Phi=I^{<\alpha}_\Phi$ for some countable ordinal~$\alpha$ (and note that this remains valid for all~$\beta\geq\alpha$). Then show that any such $\alpha$ validates $I_\Phi=I^\alpha_\Phi$. To avoid prerequisites from set theory, the reader may focus on finitary~$\Phi$ (see part~(b) above), where we get $I_\Phi=\bigcup_{n\in\mathbb N}I^n_\Phi$. \emph{Remark:}~In a certain sense, the exercise shows that our fixed points admit a construction from below. However, this construction presupposes that we are given a `large' object, namely the collection of all countable ordinals (which is itself uncountable). For this reason, it is considered to be impredicative (cf.~the introduction). Nevertheless, one obtains a construction that is predicative at each stage, or `locally predicative' in the sense of Wolfram Pohlers~\cite{pohlers-local-predicativity}.
\end{exercise}

To ensure monotonicity on a syntactic level, we rely on the notion of positive subformula. Specifically, let $\lpax$ be the language of first order arithmetic with a unary predicate symbol~$X$, which we have already encountered in the first part of this lecture~\cite{first-course}. As in the latter, we officially work with formulas in negation normal form, which are built from negated and unnegated prime formulas by the connectives $\land,\lor$ and $\forall,\exists$. Other connectives will be used as abbreviations. In particular, one obtains the negation~$\neg\varphi$ of a formula~$\varphi$ by applying de Morgan's laws and deleting double negations. As a consequence, the formulas denoted by $\neg\neg\varphi$ and~$\varphi$ are syntactically equal. With respect to the indicated normal form, we declare that an operator form is an $\lpax$-formula that has a single free number variable and no subformulas of the form~$\neg Xt$. As an example, we note that $Xx\to\varphi(x)$ stands for $\neg Xx\lor\varphi(x)$ and is no operator form, which makes sense because positive subformulas of the premise are usually seen as negative. Each $\lpax$-formula~$\varphi=\varphi(x)$ induces $\Phi_\varphi:\mathcal P(\mathbb N)\to\mathcal P(\mathbb N)$~with
\begin{equation*}
\Phi_\varphi(Y):=\{n\in\mathbb N\,|\,(\mathbb N,Y)\vDash\varphi(n)\},
\end{equation*}
where we take $Y$ as the interpretation of the predicate variable~$X$. When $\varphi$ is an operator form, then $\Phi_\varphi$ is monotone, as the reader may check by induction over formulas. The converse does not hold. However, if the monotonicity of $\Phi_\varphi$ is provable in pure logic, then $\varphi$ is equivalent to an operator form, as a consequence of the Craig-Lyndon interpolation theorem (see~\cite[Exercise~6.4.9]{pohlers-proof-theory}). In practice, the natural formalization is usually an operator form.

Let $\lpa$ be the usual language of Peano arithmetic, which does not involve the predicate symbol~$X$ but does otherwise coincide with $\lpax$. We now define $\lid$ as the extension of~$\lpa$ by a unary predicate symbol~$I_\varphi$ for each operator form~$\varphi$. We write the latter as $\varphi(x,X)$ when we wish to display the free number variable and predicate symbol. For an~$\lid$-formula~$\psi(y)$ with a distinguished free variable (and possibly further parameters), we declare that $\varphi(x,\psi)$ is the $\lid$-formula that results from $\varphi$ when each prime formula $Xt$ is replaced by $\psi(t)$. When $\psi$ is the prime formula $I_\varphi y$, we write $\varphi(x,I_\varphi)$ for~$\varphi(x,\psi)$. We can now formulate axiom~schemata
\begin{gather}
\forall x\in\mathbb N\,\big(\varphi(x,I_\varphi)\to I_\varphi x\big),\tag{F}\\
\forall x\in\mathbb N\,\big(\varphi(x,\psi)\to\psi(x)\big)\to\forall x\in\mathbb N\,\big(I_\varphi x\to\psi(x)\big),\tag{L}
\end{gather}
where $\varphi$ and $\psi$ range over operator forms and $\lid$-formulas, respectively. In somewhat intuitive notation, the axioms assert that we have $\Phi_\varphi(I_\varphi)\subseteq I_\varphi$ and that $I_\varphi$ is least with this property, with respect to competitors that are defined by some~$\psi$. We will soon see that the converse implication in~(F) can be derived.

For the definition of Peano arithmetic ($\mathsf{PA}$), we refer to the first part of these lecture notes~\cite{first-course}. Let us now define $\id_1$ as the $\lid$-theory that extends~$\mathsf{PA}$ by the axiom schemata (F) and (L) as well as the equality and induction axioms for all symbols and formulas of the language~$\lid$. The subscript of $\id_1$ indicates that we do not admit iterated inductive definitions, i.\,e., that the predicate symbols~$I_\varphi$ may not occur in operator forms. We now prove the fact that was mentioned above. If the reader has shown $I_\Phi\subseteq\Phi(I_\Phi)$ in Exercise~\ref{ex:ind-def}(a), they will have seen the same argument in somewhat different notation.

\begin{lemma}
For each operator form~$\varphi$ we have $\id_1\vdash\forall x\,\big(\varphi(x,I_\varphi)\leftrightarrow I_\varphi x\big)$.
\end{lemma}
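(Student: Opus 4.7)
The forward direction $\varphi(x,I_\varphi)\to I_\varphi x$ is axiom~(F), so everything reduces to deriving the converse $I_\varphi x\to\varphi(x,I_\varphi)$. The plan is to instantiate the least-fixed-point schema~(L) with the $\lid$-formula
\begin{equation*}
\psi(y)\;:\equiv\;\varphi(y,I_\varphi)
\end{equation*}
as the competing predicate. Under this choice, the conclusion of~(L) is exactly the converse we are after, so the task reduces to verifying the premise $\forall x(\varphi(x,\psi)\to\psi(x))$, which unfolds to
\begin{equation*}
\forall x\,\big(\varphi(x,\varphi(\cdot,I_\varphi))\to\varphi(x,I_\varphi)\big).
\end{equation*}
This is the syntactic mirror of the semantic step $\Phi(\Phi(I_\Phi))\subseteq\Phi(I_\Phi)$ used to derive $I_\Phi\subseteq\Phi(I_\Phi)$ in Exercise~\ref{ex:ind-def}(a): axiom~(F) plays the role of $\Phi(I_\Phi)\subseteq I_\Phi$, and applying~$\varphi$ to both sides via monotonicity yields the displayed implication.

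Concretely, the key auxiliary statement is a \emph{syntactic monotonicity lemma}, provable in pure predicate logic: for every operator form $\varphi(x,X)$ and any $\lid$-formulas $\chi_1(y),\chi_2(y)$,
\begin{equation*}
\forall y\,\big(\chi_1(y)\to\chi_2(y)\big)\;\vdash\;\varphi(x,\chi_1)\to\varphi(x,\chi_2).
\end{equation*}
Applied with $\chi_1(y):\equiv\varphi(y,I_\varphi)$ and $\chi_2(y):\equiv I_\varphi y$ --- whose hypothesis is precisely axiom~(F) --- it delivers exactly the premise that~(L) demands, after which~(L) and~(F) combine into the desired biconditional.

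The only real obstacle is establishing this monotonicity lemma, and the plan is a routine structural induction on~$\varphi$. The base case $\varphi\equiv Xt$ reduces to the hypothesis instance $\chi_1(t)\to\chi_2(t)$; the remaining atomic cases are arithmetical prime formulas and their negations, on which the substitution acts as the identity and the implication is trivial. The stipulation in the definition of operator form that no subformula $\neg Xt$ occurs is precisely what rules out the problematic contravariant base case. The induction steps for $\land,\lor,\forall$ and~$\exists$ go through immediately, since each of these connectives is monotone in every argument. No ingredient beyond (F), (L) and this structural induction is needed.
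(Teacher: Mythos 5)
Your proposal is correct and follows essentially the same route as the paper: both instantiate the schema~(L) with the competitor $\psi(y):\equiv\varphi(y,I_\varphi)$ and discharge its premise via a syntactic monotonicity lemma (proved by structural induction on the operator form, using that $X$ occurs only positively) applied to axiom~(F). Your write-up merely spells out the monotonicity lemma more explicitly than the paper does.
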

\begin{proof}
The implication from left to right is an instance of axiom schema~(F). We define $\psi(x):=\varphi(x,I_\varphi)$ to write it as $\psi(x)\to I_\varphi x$. As $X$ is positive in $\varphi$, induction over the latter yields $\varphi(x,\psi)\to\varphi(x,I_\varphi)$ or in other words~$\varphi(x,\psi)\to\psi(x)$. By axiom schema (L) we get $I_\varphi x\to\psi(x)$, which is the direction from right to left.
\end{proof}

Our next aim is to show that $\pica^-$ is an extension of~$\id_1$. Some care is needed because $\mathcal L_2$ is no sublanguage of~$\lid$, so that the claim can only hold modulo a suitable translation. As in the previous lecture~\cite{first-course}, we may view $\lpax$ as a sublanguage of $\mathcal L_2$, by treating the predicate symbol~$X$ as a set variable. In particular, we may view each operator form $\varphi(x,X)$ as an $\mathcal L_2$-formula. The condition $\Phi_\varphi(X)\subseteq X$ on the operator defined by~$\varphi$ can be expressed by
\begin{equation*}
\cl_\varphi(X):=\forall x\in\mathbb N\,\big(\varphi(x,X)\to x\in X\big).
\end{equation*}
Analogously, we shall write $\cl_\varphi(\psi)$ for the formula that results when we replace each occurrence of~$t\in X$ by $\psi(t)$, which coincides with the premise of~(L) above. Let us now consider the $\mathcal L_2$-formula
\begin{equation*}
\lf_\varphi(Y):=\forall x\in\mathbb N\,\big(x\in Y\leftrightarrow\forall X\subseteq\mathbb N\,(\cl_\varphi(X)\to x\in X)\big).
\end{equation*}
It asserts that the set variable~$Y$ coincides with the least fixed point~$I_\Phi$, as defined in the paragraph before Exercise~\ref{ex:ind-def}. Let us associate a fixed set variable $Y_\varphi$ with each operator form~$\varphi$. Given an $\lid$-formula $\psi$, we write $\psi^\star$ for the $\mathcal L_2$-formula that we obtain when each prime formula $I_\varphi t$ in~$\psi$ is replaced by $t\in Y_\varphi$. The following shows in particular that $\pica^-$ extends $\id_1$ for $\lpa$-formulas.

\begin{proposition}
For each $\lid$-formula $\psi$ we have
\begin{equation*}
\id_1\vdash\psi\quad\Rightarrow\quad\pica^- + \{\lf_\varphi(Y_\varphi)\,|\,I_\varphi\text{ occurs in }\psi\}\vdash\psi^\star.
\end{equation*}
\end{proposition}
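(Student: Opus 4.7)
The plan is to proceed by induction on the length of an $\id_1$-derivation of $\psi$. Since the translation $(-)^\star$ is defined by replacing prime formulas $I_\varphi t$ by $t\in Y_\varphi$, it commutes with the propositional connectives and with the first-order quantifiers; moreover, any $I_{\varphi'}$ occurring in the translation of a subformula already occurs in the translation of $\psi$ itself, so the set of hypotheses $\{\lf_\varphi(Y_\varphi)\,|\,I_\varphi\text{ occurs in }\psi\}$ is rich enough to handle every sub-derivation. The substantive work is to verify that the translation of each non-logical axiom of $\id_1$ is provable in $\pica^- + \{\lf_\varphi(Y_\varphi)\,|\,I_\varphi\text{ occurs}\}$.

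First I would treat the axioms inherited from $\mathsf{PA}$: their translations are themselves $\lpa$-formulas, so nothing has to be shown. The equality axioms for each $I_\varphi$ translate to the equality axioms for the set $Y_\varphi$, which are available in $\pica^-$. For the induction schema applied to an $\lid$-formula $\theta(x)$, I would use the fact that $\theta^\star$ is arithmetical in the set parameters $Y_{\varphi_1},\dots,Y_{\varphi_k}$ listed in $\theta$. By arithmetical comprehension (with parameters, which $\pica^-$ inherits from $\aca_0$) the set $Z:=\{x\in\mathbb N\,|\,\theta^\star(x)\}$ exists; applying the set-induction axiom of $\pica^-$ to $Z$ then gives the translated induction instance.

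For the closure axiom (F), note that $(\varphi(x,I_\varphi))^\star$ is just $\varphi(x,Y_\varphi)$, where we regard the predicate symbol $X$ as substituted by $Y_\varphi$. The goal $\forall x\,(\varphi(x,Y_\varphi)\to x\in Y_\varphi)$ follows from $\lf_\varphi(Y_\varphi)$: for any set $X$ with $\cl_\varphi(X)$, $\lf_\varphi(Y_\varphi)$ gives $Y_\varphi\subseteq X$, and monotonicity of the operator form $\varphi$ (provable by induction on $\varphi$) propagates $\varphi(x,Y_\varphi)$ to $\varphi(x,X)$, whence $x\in X$ by $\cl_\varphi(X)$; as $X$ was arbitrary, the right-hand side of $\lf_\varphi(Y_\varphi)$ yields $x\in Y_\varphi$. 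For the minimality axiom (L) with competitor $\psi$, I would again use arithmetical comprehension to form the set $Z:=\{x\,|\,\psi^\star(x)\}$ (permitted because $\psi^\star$ is arithmetical in the $Y_{\varphi'}$ and in any further number and set parameters). The translated premise $\cl_\varphi(\psi^\star)$ is then equivalent to $\cl_\varphi(Z)$, and the right-to-left direction of $\lf_\varphi(Y_\varphi)$ delivers $\forall x\,(x\in Y_\varphi\to x\in Z)$, which unfolds to $\forall x\,(x\in Y_\varphi\to\psi^\star(x))$.

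The main obstacle, and the one that makes the proposition sensitive to the absence of iterated inductive definitions in $\id_1$, is ensuring that the witness sets invoked in the induction and minimality steps can actually be formed. Because operator forms of $\id_1$ live in $\lpax$ and not in $\lid$, the translated formulas $\theta^\star$ and $\psi^\star$ contain no bound set quantifiers, so they are arithmetical in the parameters $Y_\varphi$ and arithmetical comprehension suffices. If iterated inductive definitions were allowed, the translated formulas would become genuinely $\Pi^1_1$ in the parameters and parameter-free $\Pi^1_1$-comprehension would no longer be enough. Everything else reduces to bookkeeping about how $(-)^\star$ interacts with substitution, in particular verifying $(\varphi(x,\psi))^\star = \varphi(x,\psi^\star)$ by a straightforward induction on $\varphi$, which is what makes the translations of (F) and (L) align with instances of $\lf_\varphi(Y_\varphi)$.
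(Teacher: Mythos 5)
Your treatment of the axioms (F), (L) and induction matches the paper's argument, but there is a genuine gap at the very start: the claim that ``any $I_{\varphi'}$ occurring in the translation of a subformula already occurs in the translation of $\psi$ itself, so the set of hypotheses $\{\lf_\varphi(Y_\varphi)\,|\,I_\varphi\text{ occurs in }\psi\}$ is rich enough to handle every sub-derivation'' is false. A derivation of $\psi$ in $\id_1$ is not built from subformulas of $\psi$: via modus ponens (or cuts) it may pass through auxiliary lemmas, and hence through instances of (F) and (L), for operator forms $\varphi'$ such that $I_{\varphi'}$ does not occur in $\psi$ at all. For those $\varphi'$ the hypothesis $\lf_{\varphi'}(Y_{\varphi'})$ is simply not among the assumptions of the proposition, so your induction has nothing to translate the corresponding axiom instances into. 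A symptom of the problem is that your argument never uses $\Pi^1_1$-comprehension anywhere --- everything you invoke is available in $\aca_0$ --- which cannot be right, since the conclusion would then be a conservativity of $\id_1$ over $\aca_0$ plus the fixed-point assumptions for the predicates visible in $\psi$.

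The paper closes exactly this gap as follows. One first proves, by induction on the derivation, the statement $\aca_0+\{\lf_\varphi(Y_\varphi)\,|\,\varphi\in\Psi\}\vdash\psi^\star$, where $\Psi$ collects \emph{all} operator forms whose axioms (F) or (L) are used in the given derivation. For each $\varphi\in\Psi$ with $I_\varphi$ not occurring in $\psi$, the variable $Y_\varphi$ does not occur in $\psi^\star$, so one may universally quantify it out and discharge the hypothesis by proving $\exists Y_\varphi\subseteq\mathbb N.\,\lf_\varphi(Y_\varphi)$. This existence statement is precisely an instance of parameter-free $\Pi^1_1$-comprehension (the defining condition $\forall X\subseteq\mathbb N\,(\cl_\varphi(X)\to x\in X)$ is $\Pi^1_1$ with no free set variables, since $\varphi$ is an $\lpax$-operator form and $X$ gets bound). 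This is the one and only place where the strength of $\pica^-$ beyond $\aca_0$ enters, and it is the step your proposal is missing. Your verifications of (F), (L) and induction are otherwise fine and coincide with the paper's.
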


In the following proof, we will see that $\pica^-$ proves the existence of (necessarily unique) sets $Y_\varphi\subseteq\mathbb N$ that satisfy the assumptions $\lf_\varphi(Y_\varphi)$. Hence the latter do not make the theory stronger but merely fix the meaning of the~$Y_\varphi$.

\begin{proof}
Write $\Psi$ for the collection of all operator forms $\varphi$ such that the corresponding instance of the axiom (F) or (L) is used in the given derivation~$\id_1\vdash\psi$. We will use induction over the latter to establish
\begin{equation*}
\aca_0 + \{\lf_\varphi(Y_\varphi)\,|\,\varphi\in\Psi\}\vdash\psi^\star.
\end{equation*}
Let us first show how the proposition can be deduced. The point is that $\Psi$ may contain operator forms $\varphi$ such that $I_\varphi$ does not occur in~$\psi$. To simplify notation, we assume $\Psi=\{\varphi\}$ for such a $\varphi$. From the above we then obtain
\begin{equation*}
\aca_0\vdash\forall Y_\varphi\subseteq\mathbb N\,\big(\lf_\varphi(Y_\varphi)\to\psi^\star\big).
\end{equation*}
Given that the predicate $I_\varphi$ does not occur in~$\psi$, the associated set variable $Y_\varphi$ does not occur in~$\psi^\star$. In order to get $\pica^-\vdash\psi^\star$, it is thus enough to have
\begin{equation*}
\pica^-\vdash\exists Y_\varphi\subseteq\mathbb N.\,\lf_\varphi(Y_\varphi).
\end{equation*}
The latter follows by parameter-free $\Pi^1_1$-comprehension (as the $\lpax$-formula $\varphi$ translates into an arithmetical $\mathcal L_2$-formula with a single free set variable~$X$ that becomes bound by a universal quantifier). In the remaining induction over the derivation of $\psi$ in~$\id_1$, it suffices to consider the case where $\psi$ is an axiom (since the `renaming' of $I_\varphi$ into $Y_\varphi$ does not affect the validity of logical rules). If $\psi$ is an induction axiom in the language $\lid$, then $\psi^\star$ is an instance of arithmetical induction, which is provable in~$\aca_0$ (see, e.\,g., Exercise~6.4 from the first lecture~\cite{first-course}). Here one should note that arithmetical comprehension \emph{with parameters} is admitted in $\aca_0$ and in~$\pica^-$. The equality axioms and the axioms of Robinson arithmetic do not pose any challenge. Let us now assume that $\psi$ is the above instance of (F). Then $\psi^\star$ is the formula $\cl_\varphi(Y_\varphi)$, so that our task is to show
\begin{equation*}
\aca_0\vdash\lf_\varphi(Y_\varphi)\to\cl_\varphi(Y_\varphi).
\end{equation*}
The reader who has proved $\Phi(I_\Phi)\subseteq I_\Phi$ in Exercise~\ref{ex:ind-def}(a) will have seen the relevant argument: Aiming at $\cl_\varphi(Y_\varphi)$, we consider an arbitrary~$x\in\mathbb N$ and assume $\varphi(x,Y_\varphi)$. We need to show $x\in Y_\varphi$, which $\lf_\varphi(Y_\varphi)$ makes equivalent to
\begin{equation*}
\forall X\subseteq\mathbb N\,\big(\cl_\varphi(X)\to x\in X\big).
\end{equation*}
It remains to show $x\in X$ for an arbitrary $X\subseteq\mathbb N$ with $\cl_\varphi(X)$. Again by $\lf_\varphi(Y_\varphi)$, we see that any such~$X$ validates
\begin{equation*}
\forall y\in\mathbb N\,(y\in Y_\varphi\to y\in X).
\end{equation*}
Given that all occurrences of~$X$ in the operator form~$\varphi$ are positive, we get
\begin{equation*}
\varphi(x,Y_\varphi)\to\varphi(x,X).
\end{equation*}
Due to the assumptions $\varphi(x,Y_\varphi)$ and $\cl_\varphi(X)$, this yields $\varphi(x,X)$ and then $x\in X$, as required. For the axiom~(L) as above, we note that (L)$^\star$ is given by
\begin{equation*}
\forall x\in\mathbb N\,\big(\varphi(x,\psi^\star)\to\psi^\star(x)\big)\to\forall x\in\mathbb N\,\big(x\in Y_\varphi\to\psi^\star(x)\big),
\end{equation*}
as the operator form $\varphi$ contains no predicate symbols $I_\theta$. Given that $\psi^\star$ is arith\-metical, we may form~$X\subseteq\mathbb N$ with
\begin{equation*}
\forall y\in\mathbb N\,\big(x\in X\leftrightarrow\psi^\star(y)\big).
\end{equation*}
This makes (L)$^\star$ equivalent to
\begin{equation*}
\cl_\varphi(X)\to\forall x\in\mathbb N\,(x\in Y_\varphi\to x\in X),
\end{equation*}
which is an immediate consequence of $\lf_\varphi(Y_\varphi)$.
\end{proof}

The following important result (or rather the equivalence with $\neg\varphi(x)$ in the proof) is sometimes called `Kleene normal form theorem'. To connect with the hom\-on\-y\-mous result from computability theory, one may imagine that $\theta_0$ describes a computation that can only use finitely many values of the oracle~$f$. The result does also show why ordinal analysis has a broader scope then it may first appear: by analyzing well orders, one can characterize all $\Pi^1_1$-formulas that are provable in an axiom system.

\begin{proposition}\label{prop:Kleene-normal-form}
For each $\Pi^1_1$-formula~$\varphi(x)$ there is an arithmetically definable (in fact recursive) family of binary relations $\tl_x$ on $\mathbb N$ such that we have
\begin{equation*}
\aca_0\vdash\forall x\in\mathbb N\,\big(\varphi(x)\leftrightarrow\text{``\,$\tl_x$ is well founded"}\,\big).
\end{equation*}
The definition of~$\tl_x$ has the same parameters as~$\varphi$.
\end{proposition}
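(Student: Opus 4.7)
The strategy is to reduce $\varphi(x)$ to the assertion that a uniformly recursive tree $T_x \subseteq \mathbb N^{<\mathbb N}$ has no infinite branch, and then take~$\tl_x$ to be the proper-extension relation on~$T_x$. Well-foundedness of~$\tl_x$ will then coincide with the absence of branches, and hence with~$\varphi(x)$ itself.

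To set this up, I would first bring $\varphi(x)$ into the standard Kleene normal form for $\Pi^1_1$-formulas. Starting from $\varphi(x) \equiv \forall Z \subseteq \mathbb N\, \theta(x, Z)$ with $\theta$ arithmetical, I identify~$Z$ with its characteristic function, Skolemise the arithmetical matrix (pulling existential number quantifiers out as second-order witness functions and absorbing them into the main second-order variable via pairing), and use standard $\Sigma^0_k$-normal-form manipulations to compress the remaining number quantifiers into a single universal one. Arithmetical choice, which is provable in~$\aca_0$, suffices to carry this out. The result is a primitive recursive predicate~$R$ with
\[
\varphi(x) \;\equiv\; \forall f \in \mathbb N^{\mathbb N} \; \exists n \in \mathbb N \; R(x, \bar f(n)),
\]
where $\bar f(n)$ codes $\langle f(0), \ldots, f(n-1)\rangle$. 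I then define
\[
T_x := \{\sigma \in \mathbb N^{<\mathbb N} \mid \neg R(x, \tau) \text{ for every } \tau \sqsubseteq \sigma\},
\]
which is uniformly recursive in~$x$ and closed under initial segments, and declare $\sigma \tl_x \tau$ to hold exactly when $\sigma, \tau \in T_x$ and $\sigma$ is a proper extension of~$\tau$ (fixing a primitive recursive coding of finite sequences as natural numbers). The relation~$\tl_x$ is then recursive uniformly in~$x$ and carries only the parameters of~$\varphi$.

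The remaining verification takes place inside $\aca_0$. If $\neg\varphi(x)$ is witnessed by some~$f$, then by definition of~$T_x$ every initial segment $\bar f(n)$ lies in~$T_x$, so $(\bar f(n))_{n \in \mathbb N}$ is an infinite $\tl_x$-descending chain and $\tl_x$ is not well founded. Conversely, from any infinite chain $\sigma_0, \sigma_1, \ldots$ with $\sigma_{n+1} \tl_x \sigma_n$, the $\sigma_k$ have strictly increasing lengths and are pairwise compatible under extension; hence $f(n) := \sigma_k(n)$ for any~$k$ with $|\sigma_k| > n$ defines an infinite branch through~$T_x$, witnessing~$\neg\varphi(x)$. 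The one delicate step is this last extraction: $f$ is defined by an arithmetical formula in the parameter $k \mapsto \sigma_k$, and it is precisely this invocation of arithmetical comprehension that requires the full strength of~$\aca_0$. Everything else is a routine formalisation of the classical Kleene normal form theorem from computability theory.
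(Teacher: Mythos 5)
Your proposal is correct and follows essentially the same route as the paper's proof sketch: both pass to the Kleene normal form $\varphi(x)\leftrightarrow\forall f\,\exists n\,R(x,\bar f(n))$, take $\tl_x$ to be proper extension on the tree of unsecured sequences, and verify the equivalence by reading off an infinite branch from a descending chain and vice versa. The only point worth flagging is that the Skolemisation step needs just numerical choice for arithmetical formulas (take least witnesses, available from arithmetical comprehension), not the set-valued scheme of arithmetical choice, which would outrun $\aca_0$.
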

\begin{proof}[Proof sketch]
We present the main ideas but refer to~\cite[Lemma~V.1.4]{simpson09} for details. Write $\neg\varphi(x)=\exists X\subseteq\mathbb N.\,\theta(x,X)$ with an arithmetical formula~$\theta$. For a function $f:\mathbb N\to\mathbb N$, let $f[m]$ denote the finite sequence $\langle f(0),\ldots,f(m-1)\rangle$. Similarly, we write $\sigma[m]:=\langle\sigma_0,\ldots,\sigma_{m-1}\rangle$ for the initial segment of a finite $\sigma=\langle\sigma_0,\ldots,\sigma_{n-1}\rangle$ with~$m\leq n$. To explain $X[m]$ for $X\subseteq\mathbb N$, we identify the latter with its characteristic function. By introducing Skolem functions, we find a decidable property~$\theta_0$ such that $\theta(x,X)$ holds precisely when there is an $f:\mathbb N\to\mathbb N$ with $\theta_0(x,X[m],f[m])$ for all~$m\in\mathbb N$. In particular, we may assume that $\theta_0$ is given by an arithmetical formula. The existentially quantified $X$ in $\neg\varphi$ can also be coded into~$f$, so that we get a modified~$\theta_0$ with
\begin{equation*}
\aca_0\vdash\neg\varphi(x)\leftrightarrow\exists f:\mathbb N\to\mathbb N\,\forall m\in\mathbb N.\,\theta_0(x,f[m]).
\end{equation*}
Let us now declare that $\sigma\tl_x\tau$ holds if $\sigma$ codes a sequence $\langle\sigma_0,\ldots,\sigma_{n-1}\rangle$ with $\theta_0(x,\sigma[m])$ for all $m\leq n$ and if $\tau=\sigma[k]$ with $k<n$ is a proper initial segment. When $f$ witnesses~$\neg\varphi(x)$, we have $f[m+1]\tl_x f[m]$ for all~$m$, so that $\tl_x$ is not well founded. Conversely, if we have an infinitely descending chain $\sigma^0,\sigma^1,\ldots$ with respect to $\tl_x$, then we can witness $\neg\varphi(x)$ by stipulating $f[m]:=\sigma^m[m]$. One often thinks of~$f$ as a branch in the tree of all finite descending chains for~$\tl_x$.
\end{proof}

Finally, we derive the promised conservativity result:

\begin{theorem}\label{thm:pica-id-conserv}
For each $\lid$-formula $\psi$ we have
\begin{equation*}
\pica^- + \{\lf_\varphi(Y_\varphi)\,|\,\varphi\text{ is an operator form}\}\vdash\psi^\star\quad\Rightarrow\quad\id_1\vdash\psi.
\end{equation*}
\end{theorem}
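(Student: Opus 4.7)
The plan is to construct, from any given proof of $\psi^\star$ in $\pica^- + \{\lf_\varphi(Y_\varphi)\,|\,\varphi\text{ operator form}\}$, an $\id_1$-definable countable $\omega$-model $(\mathbb N,\mathcal S)$ of the fragment of that theory actually used in the proof, and to translate the proof into $\id_1$ via this model. Since only finitely many operator forms and only finitely many instances of parameter-free $\Pi^1_1$-comprehension appear in any single proof, we can tailor $\mathcal S$ to that proof.

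First, I would use Proposition~\ref{prop:Kleene-normal-form} to rewrite each parameter-free $\Pi^1_1$-comprehension instance $\{x\,|\,\chi(x)\}$ occurring in the proof as $\{x\,|\,\tl_x\text{ is well founded}\}$ for some recursive family~$\tl_x$. By Exercise~\ref{ex:ind-def}(c), the well-founded part of $\tl_x$ is the least fixed point of an operator form (obtained by taking an operator on pairs $(x,y)$), so the set in question is arithmetically definable from the corresponding $I_\varphi$. Let $\varphi_1,\ldots,\varphi_N$ denote the resulting finite list of operator forms, augmented by those $\varphi$ that occur in the instances of $\lf_\varphi(Y_\varphi)$ used in the proof. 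I would then take $\mathcal S$ to be the collection of subsets of $\mathbb N$ that are arithmetically definable (with number parameters) from $I_{\varphi_1},\ldots,I_{\varphi_N}$. Externally, $(\mathbb N,\mathcal S)$ models the relevant fragment of $\pica^- + \{\lf_\varphi(Y_\varphi)\}$: closure under arithmetical comprehension with $\mathcal S$-parameters is automatic; parameter-free $\Pi^1_1$-comprehension is provided by the previous step; set induction is trivial for a standard~$\mathbb N$; and each $Y_\varphi$ can be interpreted as $I_\varphi\in\mathcal S$, so $\lf_\varphi(Y_\varphi)$ reduces to axioms (F) and (L) of $\id_1$ for~$\varphi$.

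Second, I would internalize this model through a translation $\chi\mapsto\chi^\sharp$ from $\mathcal L_2$-formulas to $\lid$-formulas, defined by recursion on $\chi$. Set variables are interpreted as natural-number codes for arithmetical $\mathcal L_2$-formulas in oracle variables $X_1,\ldots,X_N$ with numerical parameters; a prime formula $t\in Y$ is translated as ``$t$ satisfies the formula coded by~$Y$ when each $X_i$ is read as $I_{\varphi_i}$''; and set quantifiers become quantifiers over codes. Uniform interpretation of membership is supplied by a partial truth predicate for arithmetic in $I_{\varphi_1},\ldots,I_{\varphi_N}$, which exists in $\id_1$ for every fixed arithmetical complexity bound—in particular, for the bound determined by the given finite proof. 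By induction on the proof one then verifies $\id_1\vdash\chi^\sharp$ for each formula $\chi$ occurring in it: the Robinson, equality, and set-induction axioms translate routinely; arithmetical and parameter-free $\Pi^1_1$-comprehension instances hold by construction; and each $\lf_\varphi(Y_\varphi)$, translated with the canonical code for $I_\varphi$ in place of $Y_\varphi$, reduces literally to the conjunction of axioms (F) and (L) of $\id_1$ for $\varphi$.

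Applied finally to $\psi^\star$ itself, with the canonical code substituted for each free $Y_\varphi$, the translation $(\psi^\star)^\sharp$ collapses to $\psi$, giving the desired $\id_1\vdash\psi$. The main obstacle is the careful bookkeeping of partial truth predicates: one has to check that $(\cdot)^\sharp$ commutes with substitution and with the logical rules, and that the complexity of the chosen truth predicate is high enough to evaluate every formula in the given proof. Tarski's theorem is not violated because the complexity bound is fixed once the (finite) proof is fixed; a formula-by-formula argument would also work, but is notationally heavier. With this in place, the remaining verifications are routine inductions.
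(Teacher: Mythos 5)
Your overall strategy---interpret set variables as codes for sets arithmetically definable from finitely many $I_{\varphi_i}$ and translate the given finite proof---is genuinely different from the paper's, which is purely model-theoretic: the paper takes an \emph{arbitrary} model $\mathcal N$ of $\id_1$ (not the standard structure), lets $\mathcal S$ be the collection of $\lid$-definable subsets of its domain, verifies \emph{in the metatheory} that $(\mathcal N,\mathcal S)$ satisfies $\pica^-+\{\lf_\varphi(Y_\varphi)\}$ with $Y_\varphi$ read as $I_\varphi^{\mathcal N}$, and concludes by completeness and soundness. Your treatment of the $\Pi^1_1$-comprehension instances via Proposition~\ref{prop:Kleene-normal-form} and the accessibility operator form does match the paper's key step.

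However, your syntactic internalization has a genuine gap where you invoke partial truth predicates. The complexity bound ``determined by the given finite proof'' controls the formulas \emph{occurring in the proof}, but not the formulas \emph{coded by the codes} over which your translated set quantifiers range. Since $\mathcal S$ must be closed under arithmetical comprehension with parameters from $\mathcal S$ itself, the codes necessarily range over definitions of unbounded arithmetical complexity; already a single translated subformula $(\forall X\,\theta(X))^\sharp=\forall e\,\theta^\sharp(e)$ requires evaluating coded formulas of every complexity, i.e., a full truth predicate for arithmetic relative to $I_{\varphi_1},\ldots,I_{\varphi_N}$, which $\id_1$ cannot define by Tarski's theorem. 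The same problem infects the verification of the comprehension instances, since ``$\tl_x$ is well founded'' in the translated sense quantifies over all coded descending sequences. The argument can be repaired---for instance by formalizing the paper's model construction via arithmetized completeness inside $\id_1$, or by a cut-elimination or asymmetric-interpretation argument that avoids a uniform membership predicate---but not by partial truth predicates as you describe them. The paper sidesteps the issue entirely by keeping the satisfaction relation for $(\mathcal N,\mathcal S)$ external.
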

\begin{proof}
Given an arbitrary model $\mathcal N$ of $\id_1$, let $\mathcal S$ be the collection of definable sets, i.\,e., of sets that have the form $\{x\in N\,|\,\mathcal N\vDash\psi(x)\}$ for some $\lid$-formula~$\psi$, which may have parameters from the domain~$N$ of~$\mathcal N$. In the terminology of second order arithmetic, these are the sets that are arithmetically definable relative to the interpretations $I_\varphi^{\mathcal N}\subseteq N$ of the fixed point predicates. Let us also write $\mathcal N$ for the restriction of the given model to the language~$\lpa$. Below we will show
\begin{equation*}
(\mathcal N,\mathcal S)\vDash\pica^- + \left\{\lf_\varphi(I_\varphi^{\mathcal N})\,|\,\varphi\text{ is an operator form}\right\},
\end{equation*}
where $I_\varphi^{\mathcal N}\in\mathcal S$ serves as the interpretation of the set variable $Y_\varphi$. We thus get
\begin{equation*}
\id_1\nvDash\psi\quad\Rightarrow\quad\pica^- + \{\lf_\varphi(Y_\varphi)\,|\,\varphi\text{ is an operator form}\}\nvDash\psi^\star,
\end{equation*}
so that the theorem follows by completeness and soundness, analogous to the proof of Proposition~6.5 from the first part of these lecture notes~\cite{first-course}. As in the latter, we see that $(\mathcal N,\mathcal S)$ is a model of~$\aca_0$. In view of the definition of~$\mathcal S$, the claim that $(\mathcal N,\mathcal S)$ validates $\lf_\varphi(I_\varphi^{\mathcal N})$ amounts to
\begin{equation*}
I_\varphi^{\mathcal N}=\{x\in N\,|\,\mathcal N\vDash\psi(x)\text{ for all $\lid$-formulas~$\psi$ with }\mathcal N\vDash\cl_\varphi(\psi)\}.
\end{equation*}
Here $\subseteq$ holds because $\mathcal N$ satisfies axiom~(L). For the converse inclusion we note that (F) yields $\mathcal N\vDash\cl_\varphi(I_\varphi)$. The last and crucial task is to show that $(\mathcal N,\mathcal S)$ validates parameter-free $\Pi^1_1$-comprehension. Due to Proposition~\ref{prop:Kleene-normal-form}, it suffices to establish
\begin{equation*}
\{y\in N\,|\,(\mathcal N,\mathcal S)\vDash\text{``\,$\tl_y$ is well founded"}\}\in\mathcal S,
\end{equation*}
where the relations $\tl_y$ may be defined by any $\lpa$-formula. Consider the operator form~$\varphi(x,X)$ which asserts that $x$ codes a pair $\langle y,n\rangle$ such that we have $X\langle y,m\rangle$ for all $m$ that satisfy $m\tl_y n$ (note the connection with part~(c) of Exercise~\ref{ex:ind-def}). In view of the definition of~$\mathcal S$, we need only show
\begin{equation*}
(\mathcal N,\mathcal S)\vDash\text{``\,$\tl_y$ is well founded"}\quad\Leftrightarrow\quad\mathcal N\vDash\forall n\in\mathbb N.\,I_\varphi\langle y,n\rangle.
\end{equation*}
To avoid the unbounded quantifier on the right, one could replace $\forall n\in\mathbb N.\,I_\varphi\langle y,n\rangle$ by $I_\varphi\langle y,\langle\rangle\rangle$, noting that the empty sequence is the largest element of the order~$\tl_y$ that was constructed in the previous proof. In order to establish the right side by contradiction, we assume that the set
\begin{equation*}
\{n\in N\,|\,\mathcal N\vDash \neg I_\varphi\langle y,n\rangle\}\in\mathcal S
\end{equation*}
is nonempty. Given the left side of our equivalence, we get an $n\in N$ such that $\mathcal N$ validates $\neg I_\varphi\langle y,n\rangle$ and $m\tl_y n\to I_\varphi\langle y,m\rangle$. The latter amounts to~$\varphi(\langle y,n\rangle,I_\varphi)$. We thus get the contradictory $I_\varphi\langle y,n\rangle$ by axiom~(F) in~$\mathcal N$. For the direction from right to left, we consider an arbitrary set $\{n\in N\,|\,\mathcal N\vDash\psi(n)\}\in\mathcal S$ that does not have a $\tl_y$-minimal element. We must derive that this set is empty. Let $\psi'(x)$ be the statement that $x$ codes a pair $\langle y,n\rangle$ with $\neg\psi(n)$. We shall prove $\mathcal N\vDash\cl_\varphi(\psi')$. Given the latter, we get $I_\varphi x\to\psi'(x)$ by axiom~(L) in~$\mathcal N$. Assuming the right side of our equivalence, this yields $\psi'(\langle y,n\rangle)$ and hence~$\neg\psi(n)$ for all~$n$, as required. To establish $\cl_\varphi(\psi')$ in~$\mathcal N$, we derive $\psi'(x)$ from $\varphi(x,\psi')$. The latter asserts that $x$ codes a pair $\langle y,n\rangle$ such that $m\tl_y n$ entails $\psi'(\langle y,m\rangle)$ and hence $\neg\psi(m)$. Due to the assumption that $\{n\in N\,|\,\mathcal N\vDash\psi(n)\}$ has no $\tl_y$-minimal element, we must have~$\neg\psi(n)$ and hence $\psi'(x)$, as required.
\end{proof}

\section{Infinite proofs and operator control}\label{sect:operator-control}

In the present section, we introduce a system of infinite proofs that is suitable for our ordinal analysis of~$\id_1$. This system involves a notion of operator control that is due to Wilfried Buchholz~\cite{buchholz-local-predicativity}. The latter ensures that certain uncountable proofs can be collapsed into countable ones, as we shall see in the subsequent sections.

Let us recall the well order $\vartheta(\varepsilon_{\Omega+1})$ from Section~\ref{sect:Bachmann-Howard}. We define $\lido$ as the extension of $\lpa$ by a unary predicate symbol $I_\varphi^{\prec \alpha}$ for each operator form~$\varphi$ and each term $\alpha\in\bh$ with $\alpha\preceq\Omega$. The intended interpretation is suggested by part~(d) of Exercise~\ref{ex:ind-def}. To view $\lid$ as a sublanguage of~$\lido$, we identify the predicate symbols $I_\varphi$ and $I_\varphi^{\prec\Omega}$. The aforementioned exercise motivates this identification if we think of~$\Omega$ as the first uncountable ordinal (cf.~the explanation after Definition~\ref{def:Bachmann-Howard}). It also explains the restriction to $\alpha\preceq\Omega$ in the definition of $\lido$.

To capture the intended interpretation of $I_\varphi^{\prec\alpha}$ on a syntactic level, we shall now define an assignment of infinite disjunctions and conjunctions. In the previous lecture~\cite{first-course}, we have encountered this approach in a simpler setting. As preparation, we embed~$\mathbb N$ into $\vartheta(\varepsilon_{\Omega+1})$ by setting $n:=\langle 0,\ldots,0\rangle$ with $n$~entries~$0=\langle\rangle$. The range of this embedding is the initial segment below~$\omega:=\langle 1\rangle$. We will also write $n$ for the $n$-th numeral. All formulas are assumed to be in negation normal form, as explained in the paragraph after Exercise~\ref{ex:ind-def}. By a false literal we mean one that is false in the standard model of the natural numbers.

\begin{definition}\label{def:disj-conj}
To each $\lido$-sentence $\psi$ we assign either a disjunction $\psi\simeq\bigvee_{\gamma\prec\alpha}\psi_\gamma$ or a conjunction $\psi\simeq\bigwedge_{\gamma\prec\alpha}\psi_\gamma$ with $\alpha\preceq\Omega$ in $\vartheta(\varepsilon_{\Omega+1})$. The disjunctions are
\begin{gather*}
\psi\simeq\text{``the empty disjunction"}\quad\text{when $\psi$ is a false literal of~$\lpa$},\\
I_\varphi^{\prec\alpha}t\simeq\textstyle\bigvee_{\gamma\prec\alpha}\varphi(t,I_\varphi^{\prec\gamma}),\quad\psi_0\lor\psi_1\simeq\textstyle\bigvee_{i\prec 2}\psi_i,\quad\exists x\in\mathbb N.\,\psi(x)\simeq\textstyle\bigvee_{n\prec\omega}\psi(n).
\end{gather*}
The conjunctions are determined by $\neg\psi\simeq\bigwedge_{\gamma\prec\alpha}\neg\psi_\gamma$ for $\psi\simeq\bigvee_{\gamma\prec\alpha}\psi_\gamma$.
\end{definition}

To confirm that each $\lido$-sentence is either disjunctive or conjunctive, one should recall that $\neg\neg\varphi$ and $\varphi$ denote the same formula. For the same reason, one can conclude that $\neg\psi\simeq\bigvee_{\gamma\prec\alpha}\neg\psi_\gamma$ follows from $\psi\simeq\bigwedge_{\gamma\prec\alpha}\psi_\gamma$. Hence $(\neg\psi)_\gamma$ is always given by $\neg(\psi_\gamma)$, so that the parentheses may be omitted. To become familiar with the notation, one may wish to write out the conjunctive cases explicitly.

Our assignment of disjunctions and conjunctions amounts to an inductive definition of truth for $\lido$-sentences or, in other words, to an infinitary proof system in which $\psi\simeq\bigwedge_{\gamma\prec\alpha}\psi_\gamma$ can be deduced with premises $\psi_\gamma$ for all $\gamma\prec\alpha$. Of course, the term system $\vartheta(\varepsilon_{\Omega+1})$ is countable. At the same time, we have mentioned that the first uncountable ordinal provides a reasonable interpretation for $\Omega$. So at least intuitively, we have rules with uncountably many premises, as indicated above.

If the inductive definition of truth is to be well founded, then each $\psi_\gamma$ must be less complex than $\psi\simeq\bigwedge_{\gamma\prec\alpha}\psi_\gamma$ in some sense. We shall ensure this by a suitable definition of formula rank. This relies on basic operations of ordinal arithmetic. We declare that addition on~$\vartheta(\varepsilon_{\Omega+1})$ is given by the clause from Definition~5.3 of the previous lecture~\cite{first-course}, where we identify $\Omega$ and $\vartheta \alpha$ with $\langle\Omega\rangle$ and $\langle\vartheta\alpha\rangle$, respectively. Multiplication with left factor~$\omega$ is defined by the clauses
\begin{equation*}
\omega\cdot\Omega:=\Omega,\quad\omega\cdot\vartheta\alpha:=\vartheta\alpha,\quad\omega\cdot\langle\alpha_0,\ldots,\alpha_{n-1}\rangle:=\langle1+\alpha_0,\ldots,1+\alpha_{n-1}\rangle.
\end{equation*}
These are motivated by the set theoretic equality
\begin{equation*}
\omega^1\cdot\left(\omega^{\alpha_0}+\ldots+\omega^{\alpha_{n-1}}\right)=\omega^{1+\alpha_0}+\ldots+\omega^{1+\alpha_{n-1}}
\end{equation*}
and the intuition that $\Omega$ and $\vartheta\alpha$ represent $\varepsilon$-numbers. For later use, we also define a function $\omega:\vartheta(\varepsilon_{\Omega+1})\to\vartheta(\varepsilon_{\Omega+1})$ that represents exponentiation with base~$\omega$ and is given by
\begin{equation*}
\omega(\alpha):=\begin{cases}
\alpha & \text{if $\alpha$ has the form $\Omega$ or $\vartheta\alpha'$},\\
\langle\alpha\rangle & \text{otherwise}.
\end{cases}
\end{equation*}
Basic laws of ordinal arithmetic can be verified on a syntactic level. In particular, it is not hard to see that the map $\alpha\mapsto\omega\cdot\alpha$ is order preserving and that $\gamma\prec\omega\cdot\alpha$ entails $\gamma+1\prec\omega\cdot\alpha$. Let us also adopt the results from Exercise~5.4 of the first lecture course~\cite{first-course}. We trust that the reader will identify and check similar basic facts that are used in the sequel.

\begin{definition}
To each $\lido$-formula $\psi$ we assign a rank $\rk(\psi)\in\vartheta(\varepsilon_{\Omega+1})$ by
\begin{gather*}
\rk(\psi):=0\quad\text{when $\psi$ is a literal of $\lpa$},\\
\rk(I_\varphi^{\prec\alpha}t):=\rk(\neg I_\varphi^{\prec\alpha}t):=\omega\cdot\alpha,\\
\rk(\psi_0\lor\psi_1):=\rk(\psi_0\land\psi_1):=\max\{\rk(\psi_0),\rk(\psi_1)\}+1,\\
\rk(\exists x\in\mathbb N.\,\psi):=\rk(\forall x\in\mathbb N.\,\psi):=\rk(\psi)+1.
\end{gather*}
\end{definition}

Note that we always have $\rk(\psi)=\rk(\neg\psi)$, which preserves the duality between disjunctive and conjunctive formulas. For the following exercise, this means that only the disjunctive cases need to be considered explicitly.

\begin{exercise}\label{ex:ranks}
For $\psi\simeq\bigvee_{\gamma\prec\alpha}\psi_\gamma$ or $\psi\simeq\bigwedge_{\gamma\prec\alpha}\psi_\gamma$, prove $\rk(\psi_\gamma)\prec\rk(\psi)$ for $\gamma\prec\alpha$.
\end{exercise}

In order to collapse uncountable proofs into countable ones, we will use the function $\vartheta:\vartheta(\varepsilon_{\Omega+1})\to\vartheta(\varepsilon_{\Omega+1})\cap\Omega$ from Section~\ref{sect:Bachmann-Howard}. We have seen that the latter preserves the order between many but not all pairs of arguments. For this reason, it will be important that only certain ordinal terms occur in our proofs. To ensure this, we use a method of operator control that is due to Wilfried Buchholz~\cite{buchholz-local-predicativity}.

\begin{definition}\label{def:nice-operator}
Write $\mathcal P$ for the powerset of $\vartheta(\varepsilon_{\Omega+1})$. By a closure operator (short: operator), we mean a function $\mathcal H:\mathcal P\to\mathcal P$ such that all $X,Y\in\mathcal P$ validate
\begin{equation*}
X\subseteq\mathcal H(X)\qquad\text{and}\qquad X\subseteq\mathcal H(Y)\,\Rightarrow\,\mathcal H(X)\subseteq\mathcal H(Y).
\end{equation*}
We say that $\mathcal H$ is nice if
\begin{equation*}
\alpha\in\mathcal H(X)\quad\Leftrightarrow\quad E(\alpha)\subseteq\mathcal H(X)
\end{equation*}
holds for all~$\alpha\in\vartheta(\varepsilon_{\Omega+1})$ and $X\in\mathcal P$ (see Definition~\ref{def:Bachmann-Howard} for information on~$E$). For an operator $\mathcal H$ and $Z\in\mathcal P$, we define $\mathcal H[Z]:\mathcal P\to\mathcal P$ by $\mathcal H[Z](X):=\mathcal H(Z\cup X)$.
\end{definition}

We declare that the ordinal parameters of an $\lido$-formula~$\psi$ are given by
\begin{equation*}
k(\psi):=\{\alpha\in\vartheta(\varepsilon_{\Omega+1})\,|\,\psi\text{ contains a literal }I_\varphi^{\prec\alpha}t\text{ or }\neg I_\varphi^{\prec\alpha}t\}.
\end{equation*}
Parts of the following exercise are taken from Lemma~3.6 of~\cite{buchholz-local-predicativity}.

\begin{exercise}\label{ex:operators}
(a) Check that $X\mapsto\mathcal H_\gamma(X)$ from Definition~\ref{def:H_gamma} is a nice operator.

(b) Show that $X\subseteq Y$ implies $\mathcal H(X)\subseteq\mathcal H(Y)$ for any operator~$\mathcal H$.

(c) Prove that $\mathcal H[Z]$ is a (nice) operator when the same holds for~$\mathcal H$. Show that $\mathcal H[Z]$ and $\mathcal H$ coincide when we have $Z\subseteq\mathcal H(\emptyset)$. Also note $\mathcal H[Z][Z']=\mathcal H[Z\cup Z']$.

(d) Observe that an operator $\mathcal H$ is nice precisely when all~$X$ validate
\begin{equation*}
\Omega\in\mathcal H(X)\quad\text{and}\quad\langle\alpha_0,\ldots,\alpha_{n-1}\rangle\in\mathcal H(X)\,\Leftrightarrow\,\{\alpha_0,\ldots,\alpha_{n-1}\}\subseteq\mathcal H(X).
\end{equation*}

(e) Let us assume that~$\mathcal H$ is a nice operator. Given $\alpha,\beta\in\mathcal H(X)$, show that $\mathcal H(X)$ contains~$\alpha+\beta$ as well as $\omega\cdot\alpha$ and $\omega(\alpha)$. Then derive that we have $\rk(\psi)\in\mathcal H(k(\psi))$ for any $\lido$-formula~$\psi$.
\end{exercise}

In the context of infinite proofs, we shall always assume that formulas are closed, unless noted otherwise. This makes sense because $\forall x\in\mathbb N.\,\psi(x)\simeq\bigwedge_{n\prec\omega}\psi(n)$ can be derived from the infinitely many premises $\psi(n)$ without the use of a free variable. By an $\lido$-sequent we shall mean a finite set of $\lido$-sentences. In the context of sequents, it is common to omit parentheses $\{\cdot\}$ and to denote unions by commata. Specifically, we write $\Gamma,\Delta$ and $\Gamma,\psi$ at the place of~$\Gamma\cup\Delta$ and $\Gamma\cup\{\psi\}$, respectively, where $\Gamma$ and $\Delta$ are sequents while $\psi$ is a formula. As usual, the sequent $\Gamma=\psi_0,\ldots,\psi_{n-1}=\{\psi_0,\ldots,\psi_{n-1}\}$ represents the disjunction $\bigvee\Gamma=\psi_0\lor\ldots\lor\psi_{n-1}$. Its parameters are given by
\begin{equation*}
k(\Gamma):=\textstyle\bigcup_{i<n}k(\psi_i)\quad\text{for}\quad\Gamma=\psi_0,\ldots,\psi_{n-1}.
\end{equation*}
We can now specify our infinitary proof system, which is further explained below. The following definition is inspired by Theorem~3.8 of~\cite{buchholz-local-predicativity}.

\begin{definition}\label{def:H-controlled-proofs}
By recursion on~$\alpha\in\vartheta(\varepsilon_{\Omega+1})$, we declare that the relation $\mathcal H\vdash^\alpha_\rho\Gamma$ between a nice operator~$\mathcal H$, an element $\rho\in\vartheta(\varepsilon_{\Omega+1})$ and an $\lido$-sequent~$\Gamma$ holds precisely when we have $\{\alpha\}\cup k(\Gamma)\subseteq\mathcal H(\emptyset)$ and one of the following clauses applies:
\begin{align*}
\left(\textstyle\bigwedge\right)\quad&\parbox[t]{.75\textwidth}{for some conjunctive sentence $\psi\simeq\bigwedge_{\gamma\prec\delta}\psi_\gamma\in\Gamma$ and all $\gamma\prec\delta$, we have $\mathcal H[\{\gamma\}]\vdash^{\alpha(\gamma)}_\rho\Gamma,\psi_\gamma$ for some $\alpha(\gamma)\prec\alpha$,}\\
\left(\textstyle\bigvee\right)\quad&\parbox[t]{.75\textwidth}{for some disjunctive $\psi\simeq\bigvee_{\gamma\prec\delta}\psi_\gamma\in\Gamma$ and some $\gamma\prec\delta$ with $\gamma\prec\alpha$ and $\gamma\in\mathcal H(\emptyset)$, we have $\mathcal H\vdash^{\alpha'}_\rho\Gamma,\psi_\gamma$ for some~$\alpha'\prec\alpha$,}\\
(\mathsf{Cut})\quad&\parbox[t]{.75\textwidth}{for some $\lido$-sentence $\psi$ with $\rk(\psi)\prec\rho$, we have $\mathcal H\vdash^{\alpha'}_\rho\Gamma,\psi$ as well as $\mathcal H\vdash^{\alpha'}_\rho\Gamma,\neg\psi$ for some $\alpha'\prec\alpha$,}\\
(\mathsf{Fix})\quad&\parbox[t]{.75\textwidth}{we have $\Omega\preceq\alpha$ and there is a sentence $I_\varphi^{\prec\Omega}t\in\Gamma$ such that we have $\mathcal H\vdash^{\alpha'}_\rho\Gamma,\varphi(t,I_\varphi^{\prec\Omega})$ for some~$\alpha'\prec\alpha$.}
\end{align*}
\end{definition}

We often write $\mathcal H[\gamma_0,\ldots,\gamma_{n-1}]$ at the place of $\mathcal H[\{\gamma_0,\ldots,\gamma_{n-1}\}]$. In the following section, we will see that $\mathcal H[k(\psi)]\vdash^{\omega\cdot\rk(\psi)}_0\psi,\neg\psi$ holds for any $\lido$-sentence~$\psi$. The exercise below shows that $\mathcal H\vdash^\alpha_\rho\Gamma$ weakens to $\mathcal H[k(\Delta)]\vdash^\alpha_\rho\Gamma,\Delta$. To give an example of a derivation, we assume these facts and derive $\neg I^{\prec\alpha}_\varphi t,I^{\prec\beta}_\varphi t$ for~$\alpha\preceq\beta\preceq\Omega$ (which intuitively yields $I^{\prec\alpha}_\varphi\subseteq I^{\prec\beta}_\varphi$), where we abbreviate $\delta(\gamma):=\max\{\gamma,\rk(\varphi(t,I_\varphi^{\prec\gamma}))\}$:
\begin{equation*}
\AxiomC{$\mathcal H[\alpha,\beta,\gamma]\sststile{0}{\omega\cdot\delta(\gamma)}\neg I^{\prec\alpha}_\varphi t,I^{\prec\beta}_\varphi t,\neg\varphi(t,I_\varphi^{\prec\gamma}),\varphi(t,I_\varphi^{\prec\gamma})$}
\RightLabel{($\bigvee$)}
\UnaryInfC{$\cdots\,\,\mathcal H[\alpha,\beta,\gamma]\sststile{0}{\omega\cdot\delta(\gamma)+1}\neg I^{\prec\alpha}_\varphi t,\neg\varphi(t,I_\varphi^{\prec\gamma}),I_\varphi^{\prec\beta}t\,\,\cdots\!\!\!\!\!$}
\AxiomC{(all $\gamma\prec\alpha$)}
\RightLabel{($\bigwedge$).}
\BinaryInfC{$\mathcal H[\alpha,\beta]\sststile{0}{\omega^2\cdot\alpha}\neg I^{\prec\alpha}_\varphi t,I_\varphi^{\prec\beta}$}
\DisplayProof
\end{equation*}
The example illustrates why the premise of clause~($\bigwedge$) involves $\mathcal H[\gamma]$ rather than~$\mathcal H$. In order to provide further intuition, we point out that infinite derivations can be seen as labelled trees. Each node of such a trees can be identified with a finite sequence $\langle\gamma_0,\ldots,\gamma_{n-1}\rangle$ for $\gamma_i\prec\Omega$. Here the $\gamma_i$ indicate the premises that lead from the root to that node, with $\gamma_i=0$ or $\gamma_i=1$ for rules other than~($\bigwedge$). For~example, the top node in the derivation above corresponds to the sequence~$\langle\gamma,0\rangle$. If $\alpha$ and~$\Gamma$ are the ordinal and sequent label at the node $\langle\gamma_0,\ldots,\gamma_{n-1}\rangle$, then the condition from Definition~\ref{def:H-controlled-proofs} amounts to
\begin{equation*}
\{\alpha\}\cup k(\Gamma)\subseteq\mathcal H[\gamma_0,\ldots,\gamma_{n-1}](\emptyset)=\mathcal H(\{\gamma_0,\ldots,\gamma_{n-1}\}).
\end{equation*}
In this sense, the operator~$\mathcal H$ controls the parameters in the entire derivation. Some informal motivation for operator control has been given at the end of Section~\ref{sect:Bachmann-Howard}, but only the proof of Theorem~\ref{thm:collapsing} will explain all technical details of our approach. To become familiar with the formal side of Definition~\ref{def:H-controlled-proofs}, it may help to verify the aforementioned weakening result in detail (cf.~\cite[Lemma~3.9]{buchholz-local-predicativity}):

\begin{exercise}\label{ex:weakening}
(a) Prove that $\mathcal H\vdash^\alpha_\rho\Gamma$ entails $\mathcal H'\vdash^\alpha_\rho\Gamma$ when we have $\mathcal H(X)\subseteq\mathcal H'(X)$ for all subsets~$X$ of $\vartheta(\varepsilon_{\Omega+1})$.

(b) Show that $\mathcal H\vdash^\alpha_\rho\Gamma$ entails $\mathcal H\vdash^{\alpha'}_{\rho'}\Gamma,\Delta$ whenever we have $\alpha\preceq\alpha'$ and $\rho\preceq\rho'$ as well as $\{\alpha'\}\cup k(\Delta)\subseteq\mathcal H(\emptyset)$. \emph{Remark:} When the last condition fails, one can use part~(a) to replace~$\mathcal H$ by a suitable~$\mathcal H'$, such as $\mathcal H[\{\alpha'\}\cup k(\Delta)]$.
\end{exercise}

Some readers may have wondered why both premises of~($\mathsf{Cut}$) should be derived with the same ordinal height~$\alpha'$. With weakening at hand, we see that this is inessential, because we can match two heights by increasing the smaller one. In the rest of this section, we discuss other aspects of Definition~\ref{def:H-controlled-proofs} and draw some important consequences. First, the condition $\Omega\preceq\alpha$ in clause~($\mathsf{Fix}$) is a somewhat ad hoc but highly effective way to obtain the following soundness result for derivations that have been collapsed to height below~$\Omega$ (cf.~Exercise~\ref{ex:ind-def}).

\begin{proposition}\label{prop:soundness-id}
Extend the standard structure for $\lpa$ into an $\lido$-structure~$\mathcal N$ by recursively declaring
\begin{equation*}
I^{\prec\delta,\,\mathcal N}_\varphi:=\textstyle\bigcup_{\gamma\prec\delta}I^{\gamma,\,\mathcal N}_\varphi\quad\text{with}\quad I^{\gamma,\,\mathcal N}_\varphi:=\{n\in\mathbb N\,|\,\mathcal N\vDash\varphi(n,I^{\prec\gamma,\,\mathcal N}_\varphi)\}.
\end{equation*}
Given $\mathcal H\vdash^\alpha_\rho\Gamma$ with $\alpha\prec\Omega$, we then get $\mathcal N\vDash\bigvee\Gamma$.
\end{proposition}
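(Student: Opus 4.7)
The plan is to argue by transfinite induction on $\alpha$, which is legitimate because $\prec$ is well founded on $\vartheta(\varepsilon_{\Omega+1})$ by Corollary~\ref{cor:bachmann-howard-wo}. The statement to be proved, uniformly in the nice operator $\mathcal H$, the rank bound $\rho$, and the sequent $\Gamma$, is that any derivation $\mathcal H\vdash^\alpha_\rho\Gamma$ with $\alpha\prec\Omega$ yields $\mathcal N\vDash\bigvee\Gamma$. Since neither $\mathcal H$ nor $\rho$ enters the conclusion, the induction merely tracks the ordinal height. A case distinction is made according to the last rule applied.

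The decisive observation is that the hypothesis $\alpha\prec\Omega$ rules out clause~$(\mathsf{Fix})$ as the last rule, because that clause explicitly demands $\Omega\preceq\alpha$. In each of the three remaining clauses, every premise has height $\alpha'\prec\alpha\prec\Omega$, so the induction hypothesis applies. For $(\mathsf{Cut})$, the premises derive $\Gamma,\psi$ and $\Gamma,\neg\psi$; since exactly one of $\psi,\neg\psi$ fails in $\mathcal N$, the remaining $\bigvee\Gamma$ must hold. For $(\bigvee)$, with principal formula $\psi\simeq\bigvee_{\gamma\prec\delta}\psi_\gamma\in\Gamma$ and chosen index $\gamma_0\prec\delta$ (automatically $\gamma_0\prec\Omega$ since $\gamma_0\prec\alpha$), the induction hypothesis delivers $\mathcal N\vDash\bigvee\Gamma\lor\psi_{\gamma_0}$, and $\mathcal N\vDash\psi_{\gamma_0}$ entails $\mathcal N\vDash\psi$ by inspection. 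For $(\bigwedge)$, the induction hypothesis applied to all premises gives $\mathcal N\vDash\bigvee\Gamma\lor\psi_\gamma$ for every $\gamma\prec\delta$; if $\bigvee\Gamma$ fails in $\mathcal N$, then $\mathcal N\vDash\psi_\gamma$ for each $\gamma$, whence $\mathcal N\vDash\psi$.

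The only real bookkeeping is to verify that the syntactic decompositions of Definition~\ref{def:disj-conj} match the semantic clauses for $\mathcal N$, and the sole nontrivial case is a fixed-point literal. The recursive stipulation $I_\varphi^{\prec\delta,\,\mathcal N}=\bigcup_{\gamma\prec\delta}I_\varphi^{\gamma,\,\mathcal N}$ with $I_\varphi^{\gamma,\,\mathcal N}=\{n\,|\,\mathcal N\vDash\varphi(n,I_\varphi^{\prec\gamma})\}$ is calibrated precisely so that $\mathcal N\vDash I_\varphi^{\prec\delta}t$ holds iff $\mathcal N\vDash\varphi(t,I_\varphi^{\prec\gamma})$ for some $\gamma\prec\delta$, matching the decomposition $I_\varphi^{\prec\delta}t\simeq\bigvee_{\gamma\prec\delta}\varphi(t,I_\varphi^{\prec\gamma})$ and, by duality, its negation. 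I do not anticipate any serious obstacle: the cut-height $\rho$ plays no role in soundness, operator control is invisible here because nothing is being collapsed, and the potentially dangerous $(\mathsf{Fix})$ rule --- which would indeed fail to preserve truth at stage $\alpha\prec\Omega$, since the fixed point need not yet have closed --- is precisely the one excluded by the side condition $\Omega\preceq\alpha$ baked into Definition~\ref{def:H-controlled-proofs}.
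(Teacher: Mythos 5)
Your proposal is correct and follows essentially the same route as the paper: induction on $\alpha$, exclusion of $(\mathsf{Fix})$ via the side condition $\Omega\preceq\alpha$, the standard truth argument for $(\mathsf{Cut})$, and the observation that the decompositions of Definition~\ref{def:disj-conj} agree with the semantics of $\mathcal N$ (where the fixed-point literals are indeed the only case needing a word). Your extra remarks on the uniformity in $\mathcal H$ and $\rho$ are a sensible bit of bookkeeping that the paper leaves implicit.
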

\begin{proof}
One argues by induction on~$\alpha$ and distinguishes cases according to the clauses from Definition~\ref{def:H-controlled-proofs}. Clause~($\mathsf{Fix}$) cannot apply, due to the assumption~\mbox{$\alpha\prec\Omega$}. In the case of~($\mathsf{Cut}$), the premises yield $\mathcal N\vDash\psi\lor\bigvee\Gamma$ as well as $\mathcal N\vDash\neg\psi\lor\bigvee\Gamma$. Since we always have $\mathcal N\nvDash\psi$ or $\mathcal N\nvDash\neg\psi$, we can conclude $\mathcal N\vDash\bigvee\Gamma$, as desired. The cases of~($\bigwedge$) and~($\bigvee$) are covered by the following fact, which is straightforward by the definition of~$\mathcal N$: We have $\mathcal N\vDash\psi$ with $\psi\simeq\bigwedge_{\gamma\prec\delta}\psi_\gamma$ or $\psi\simeq\bigvee_{\gamma\prec\delta}\psi_\gamma$, respectively, precisely if $\mathcal N\vDash\psi_\gamma$ holds for all or some~$\gamma\prec\delta$.
\end{proof}

In clause~($\bigvee$), some readers may have wondered about the conditions $\gamma\prec\alpha$ and $\gamma\in\mathcal H(\emptyset)$. The latter is automatic when we have~$\gamma\in k(\psi_\gamma)$, as the initial condition from Definition~\ref{def:H-controlled-proofs} requires $k(\Gamma,\psi_\gamma)\subseteq\mathcal H(\emptyset)$. Now $\gamma\in k(\psi_\gamma)$ is satisfied in typical but not in all cases (note that $\varphi(t,I^{\prec\gamma}_\varphi)$ may not actually contain $I^{\prec\gamma}_\varphi$). The more substantial condition $\gamma\prec\alpha$ ensures that the ordinal label controls the size of existential witnesses, which allows us to extract quantitative content. This is made precise by the following result (cf.~\cite[Lemma~3.17]{buchholz-local-predicativity}), in which $\psi^\beta$ with $\beta\prec\Omega$ denotes the formula that results from~$\psi$ when all literals $I^{\prec\Omega}_\varphi t$ and $\neg I^{\prec\Omega}_\varphi t$ are replaced by  $I^{\prec\beta}_\varphi t$ and $\neg I^{\prec\beta}_\varphi t$, respectively.

\begin{theorem}[`Boundedness']\label{thm:boundedness}
Given $\mathcal H\vdash^\alpha_\rho\Gamma,\psi$, we obtain $\mathcal H\vdash^\alpha_\rho\Gamma,\psi^\beta$ for any element $\beta\in\mathcal H(\emptyset)$ with $\alpha\preceq\beta\prec\Omega$.
\end{theorem}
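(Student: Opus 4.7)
The plan is to argue by induction on $\alpha$, with a case distinction on the last rule in the given derivation $\mathcal H \vdash^\alpha_\rho \Gamma, \psi$. The initial condition $\{\alpha\} \cup k(\Gamma, \psi^\beta) \subseteq \mathcal H(\emptyset)$ for the target derivation is immediate: passing from $k(\psi)$ to $k(\psi^\beta)$ can only remove $\Omega$ (always in $\mathcal H(\emptyset)$ by niceness, cf.~Exercise~\ref{ex:operators}(d)) and add the assumed $\beta \in \mathcal H(\emptyset)$.

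A first key observation is that the $(\mathsf{Fix})$ case cannot occur, since it requires $\Omega \preceq \alpha$, contradicting $\alpha \preceq \beta \prec \Omega$. This reflects the design purpose of the side condition in $(\mathsf{Fix})$. For $(\mathsf{Cut})$, I apply the inductive hypothesis to both premises (bounding $\psi$, leaving the cut formula untouched) and reapply the rule with $\rho$ unchanged. For $(\bigwedge)$ or $(\bigvee)$ acting on a formula of $\Gamma$ other than $\psi$, the same recipe works premise-by-premise, and the reapplied rule is verbatim. When a $(\bigwedge)$ premise uses $\mathcal H[\gamma']$ in place of $\mathcal H$, monotonicity (Exercise~\ref{ex:operators}(b)) yields $\beta \in \mathcal H(\emptyset) \subseteq \mathcal H[\gamma'](\emptyset)$ and $\alpha(\gamma') \prec \alpha \preceq \beta$, so the inductive hypothesis applies with the same $\beta$.

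The substantive cases are those where the last rule acts on $\psi$ itself. If $\psi$ is not of the form $I^{\prec\Omega}_\varphi t$ or $\neg I^{\prec\Omega}_\varphi t$, then the indexing $\delta$ in $\psi \simeq \bigvee$ or $\bigwedge$ is $0$, $2$, or $\omega$, and hence unaltered by bounding; I iterate the inductive hypothesis on each premise, once to bound $\psi$ and once to bound the relevant subformula $\psi_{\gamma'}$ so that it matches $(\psi^\beta)_{\gamma'} = (\psi_{\gamma'})^\beta$, then reapply the rule. The genuinely nontrivial subcase is $(\bigvee)$ applied to $\psi = I^{\prec\Omega}_\varphi t$ with chosen witness $\gamma \prec \alpha$ in $\mathcal H(\emptyset)$. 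Since $\varphi$ is an operator form from $\id_1$ and hence contains no fixed-point predicate, the subformula $\varphi(t, I^{\prec\gamma}_\varphi)$ is fixed by the bounding substitution. The inductive hypothesis on the premise therefore gives $\mathcal H \vdash^{\alpha'}_\rho \Gamma, \psi^\beta, \varphi(t, I^{\prec\gamma}_\varphi)$, and since $\gamma \prec \alpha \preceq \beta$, the same $\gamma$ is an admissible witness for the collapsed disjunction $\psi^\beta = I^{\prec\beta}_\varphi t \simeq \bigvee_{\gamma' \prec \beta} \varphi(t, I^{\prec\gamma'}_\varphi)$. Reapplying $(\bigvee)$ closes the case. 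The dual $(\bigwedge)$ subcase for $\psi = \neg I^{\prec\Omega}_\varphi t$ is analogous: the family of premises originally indexed by $\gamma' \prec \Omega$ is simply restricted to $\gamma' \prec \beta$, with each restricted premise bounded by the inductive hypothesis.

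The main obstacle is recognizing that the side conditions in Definition~\ref{def:H-controlled-proofs} have been placed exactly where they are needed: the condition $\gamma \prec \alpha$ in $(\bigvee)$ is what allows the chosen witness to survive the collapse of the disjunction index from $\Omega$ down to $\beta \succeq \alpha$, while the condition $\Omega \preceq \alpha$ in $(\mathsf{Fix})$ is what excludes the one rule whose $\Omega$-many fixed-point unfoldings would have no analogue below $\Omega$. Everything else amounts to routine bookkeeping on the operator-control condition.
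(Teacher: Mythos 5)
Your proof is correct and follows essentially the same route as the paper's: induction on $\alpha$ with a case split on the last rule, using that $(\mathsf{Fix})$ is excluded by $\alpha\prec\Omega$ and that the witness condition $\gamma\prec\alpha\preceq\beta$ in $(\bigvee)$ lets the chosen instance survive the shrinking of the disjunction index from $\Omega$ to $\beta$. (One harmless slip: when $\psi=I^{\prec\alpha_0}_\varphi t$ with $\alpha_0\prec\Omega$, the index $\delta$ equals $\alpha_0$ rather than $0$, $2$ or $\omega$, but there $\psi^\beta=\psi$ and the case is trivial.)
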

In the important case where we have~$\alpha=\beta$, the condition $\beta\in\mathcal H(\emptyset)$ is already ensured by $\mathcal H\vdash^\alpha_\rho\Gamma,\psi$, due to the initial condition from Definition~\ref{def:H-controlled-proofs}.
\begin{proof}
Once again, we argue by induction on~$\alpha$. In view of $k(\psi^\beta)\subseteq k(\psi)\cup\{\beta\}$, the assumption $\beta\in\mathcal H(\emptyset)$ ensures that the initial condition from Definition~\ref{def:H-controlled-proofs} is satisfied. As before, we now consider the different clauses. In the crucial case,~the formula from the proposition is of the form $\psi\simeq\bigvee_{\gamma\prec\delta}\psi_\gamma$ and has been introduced by clause~($\bigvee$). The premise of this clause will then provide
\begin{equation*}
\mathcal H\vdash^{\alpha'}_\rho \Gamma,\psi,\psi_\gamma
\end{equation*}
with $\alpha'\prec\alpha$ for some $\gamma\prec\delta$ with $\gamma\prec\alpha$ and $\gamma\in\mathcal H(\emptyset)$. We can derive
\begin{equation*}
\mathcal H\vdash^{\alpha'}_\rho \Gamma,\psi^\beta,(\psi_\gamma)^\beta
\end{equation*}
by two applications of the induction hypothesis. Let us now observe that we get $\psi^\beta\simeq\bigvee_{\gamma\prec\eta}(\psi_\gamma)^\beta$ with $\eta\in\{\beta,\delta\}$, which means that $(\psi^\beta)_\gamma=(\psi_\gamma)^\beta$ holds for $\gamma\prec\eta$. In case we have $\psi=I^{\prec\Omega}_\varphi t$, this is true since $\psi_\gamma=\varphi(t,I^{\prec\gamma}_\varphi)$ with $\gamma\prec\Omega$ does not involve~$I^{\prec\Omega}_\varphi$, so that we have~$(\psi_\gamma)^\beta=\psi_\gamma$. The other cases from Definition~\ref{def:disj-conj} are readily checked. Crucially, the condition $\gamma\prec\alpha$ ensures that the $\gamma$ from our premise satisfies~$\gamma\prec\eta$. We can thus reapply~($\bigvee$) to get $\mathcal H\vdash^\alpha_\rho\Gamma,\psi^\beta$. Let us also mention the case where the formula from the proposition has the form~$\psi\simeq\bigwedge_{\gamma\prec\delta}\psi_\gamma$ and is introduced by clause~($\bigwedge$). Here we get $\psi^\beta\simeq\bigwedge_{\gamma\prec\eta}(\psi_\gamma)^\beta$ with $\eta\in\{\beta,\delta\}$. In fact, we have either $\eta=\delta$ or $\eta=\beta\prec\Omega=\delta$, so that $\eta\preceq\delta$ holds in any case. Due to the latter, the number of premises in clause~($\bigwedge$) becomes smaller. For this reason, one can reapply the clause after the induction hypothesis has been used. As in the previous proof, clause~($\mathsf{Fix}$) is excluded by the condition~$\alpha\prec\Omega$. The remaining cases concern ($\mathsf{Cut}$) and applications of ($\bigvee$) and~($\bigwedge$) to formulas in~$\Gamma$. These are readily reduced to the induction hypothesis, as $\psi$ plays no distinguished role.
\end{proof}

As mentioned above, our embedding of~$\lid$ into~$\lido$ will identify $I_\varphi$ with $I^{\prec\Omega}_\varphi$. The following result shows that our infinitary proof system tracks information on the closure ordinals of inductive definitions (i.\,e., on the least~$\alpha$ with $I^\alpha_\Phi=I^{<\alpha}_\Phi$, as considered in part~(d) of Exercise~\ref{ex:ind-def}).

\begin{corollary}\label{cor:ind-stages}
Given $\mathcal H\vdash^\alpha_\rho I^{\prec\Omega}_\varphi n$ with $\alpha\prec\Omega$, we get $n\in I_\varphi^{\prec\alpha,\mathcal N}$ (where the latter is defined as in Proposition~\ref{prop:soundness-id}).
\end{corollary}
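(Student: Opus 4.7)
The plan is very short: combine the Boundedness Theorem with the Soundness Proposition.

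First I would apply Theorem~\ref{thm:boundedness} to the given derivation $\mathcal H\vdash^\alpha_\rho I^{\prec\Omega}_\varphi n$, taking the sequent $\Gamma$ to be empty, the distinguished formula $\psi$ to be $I^{\prec\Omega}_\varphi n$, and the bound $\beta$ to be $\alpha$ itself. The hypothesis $\alpha\prec\Omega$ matches the side condition $\alpha\preceq\beta\prec\Omega$, and the remark following the statement of Theorem~\ref{thm:boundedness} tells us that the requirement $\beta=\alpha\in\mathcal H(\emptyset)$ is automatic from the initial condition of the derivation. Noting that $(I^{\prec\Omega}_\varphi n)^\alpha=I^{\prec\alpha}_\varphi n$ by the definition of $\psi^\beta$, boundedness then yields
\begin{equation*}
\mathcal H\vdash^\alpha_\rho I^{\prec\alpha}_\varphi n.
\end{equation*}

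Next I would invoke Proposition~\ref{prop:soundness-id}. Since $\alpha\prec\Omega$ still holds, that proposition applies and gives $\mathcal N\vDash I^{\prec\alpha}_\varphi n$. Unfolding the definition of the structure~$\mathcal N$ in Proposition~\ref{prop:soundness-id}, the satisfaction relation $\mathcal N\vDash I^{\prec\alpha}_\varphi n$ is literally the statement $n\in I^{\prec\alpha,\,\mathcal N}_\varphi$, which is the desired conclusion.

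There is no real obstacle here: the whole content of the corollary is that the ordinal label on a derivation of $I^{\prec\Omega}_\varphi n$ bounds the stage at which $n$ enters the inductively defined set, and this is precisely what the two previous results were designed to deliver in combination. The only point worth double-checking when writing the proof out is the trivial identity $(I^{\prec\Omega}_\varphi n)^\alpha=I^{\prec\alpha}_\varphi n$, which is immediate from the definition of $\psi^\beta$ given just before Theorem~\ref{thm:boundedness}.
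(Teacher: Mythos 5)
Your proposal is correct and follows exactly the paper's own argument: apply the boundedness theorem with $\beta=\alpha$ to obtain $\mathcal H\vdash^\alpha_\rho I^{\prec\alpha}_\varphi n$, then conclude by the soundness proposition. The extra care you take with the side conditions (that $\alpha\in\mathcal H(\emptyset)$ is automatic, and that $(I^{\prec\Omega}_\varphi n)^\alpha=I^{\prec\alpha}_\varphi n$) is accurate and matches the remarks surrounding Theorem~\ref{thm:boundedness}.
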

\begin{proof}
By the boundedness theorem we obtain~$\mathcal H\vdash^\alpha_\rho I^{\prec\alpha}_\varphi n$. Now the claim follows by the preceding proposition on soundness.
\end{proof}

It is instructive to derive the following special case of the corollary. We will later use it to show that $\pica^-$ cannot prove the well foundedness of the Bachmann-Howard ordinal~$\vartheta(\varepsilon_{\Omega+1})$.

\begin{exercise}\label{ex:limit-wfness-id1}
For a suitable $\lpa$-definition of the order~$\prec$ on $\vartheta(\varepsilon_{\Omega+1})$, we consider the operator form
\begin{equation*}
\varphi(x,X):=\forall y\in\mathbb N\,(y\prec x\to Xy).
\end{equation*}
Assume $\mathcal H\vdash^\alpha_\rho I^{\prec\Omega}_\varphi\delta$ with $\alpha\prec\Omega$ for (the numeral that codes) a term $\delta\in\vartheta(\varepsilon_{\Omega+1})$. Show that we must have $\delta\prec\alpha$. \emph{Hint:} Use induction on~$\alpha$ to prove
\begin{equation*}
I_\varphi^{\prec\alpha,\,\mathcal N}\cap\vartheta(\varepsilon_{\Omega+1})=\{\delta\in\vartheta(\varepsilon_{\Omega+1})\,|\,\delta\prec\alpha\}.
\end{equation*}
\end{exercise}

The reader may have observed that the previous results hold in the presence of the cut rule, i.\,e., that we have never required~$\rho=0$ or even just~$\rho\prec\Omega$. In sharp contrast with the predicative case, it seems fair to say that cut elimination does only play a secondary role in the ordinal analysis of impredicative systems such as~$\id_1$ (even though we will need to reduce certain cuts in our proof of collapsing). Let us also point out that Corollary~\ref{cor:ind-stages} in itself is not a deep result -- it holds by the design of our infinitary proof system. To make it significant, we need an embedding of $\id_1$ and a collapsing procedure that yields proofs with height~$\alpha\prec\Omega$. These requirements will be satisfied in the following sections.

\section{Embedding and collapsing}\label{sect:embedding}

In the present section, we prove the embedding and collapsing results that were promised above. The following is a first ingredient for our embedding theorem. We shall always assume that $\mathcal H$ is a nice operator.

\begin{lemma}\label{lem:LEM}
We have $\mathcal H[k(\psi)]\vdash^{\omega\cdot\rk(\psi)}_0\psi,\neg\psi$ for any $\lido$-sentence~$\psi$.
\end{lemma}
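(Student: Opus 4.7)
The plan is induction on $\rk(\psi)$, exploiting Exercise~\ref{ex:ranks} to ensure that every constituent $\psi_\gamma$ in a decomposition $\psi \simeq \bigvee_{\gamma \prec \delta} \psi_\gamma$ (or its dual) has strictly smaller rank. The base case, $\psi$ an $\lpa$-literal, is immediate: $\rk(\psi) = 0$ and $k(\psi) = \emptyset$, and by Definition~\ref{def:disj-conj} one of $\psi, \neg\psi$ is the empty disjunction, so the other is the empty conjunction; then $(\bigwedge)$ applied with no premises gives $\mathcal{H} \vdash^0_0 \psi, \neg\psi$.

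For the inductive step assume WLOG $\psi \simeq \bigvee_{\gamma \prec \delta} \psi_\gamma$, so $\neg\psi \simeq \bigwedge_{\gamma \prec \delta} \neg\psi_\gamma$. The induction hypothesis gives $\mathcal{H}[k(\psi_\gamma)] \vdash^{\omega \cdot \rk(\psi_\gamma)}_0 \psi_\gamma, \neg\psi_\gamma$ for each $\gamma \prec \delta$. A glance at Definition~\ref{def:disj-conj} shows $k(\psi_\gamma) \subseteq k(\psi) \cup \{\gamma\}$ (the extra $\gamma$ being needed only for $\psi = I_\varphi^{\prec\delta} t$), so by Exercise~\ref{ex:weakening}(a)--(b) I can upgrade the operator and weaken in $\psi$ to obtain
\[
\mathcal{H}[k(\psi) \cup \{\gamma\}] \vdash^{\omega \cdot \rk(\psi_\gamma)}_0 \psi, \psi_\gamma, \neg\psi_\gamma.
\]
Setting $\eta_\gamma := \max\{\gamma, \omega \cdot \rk(\psi_\gamma)\} + 1$ (an element of the operator by Exercise~\ref{ex:operators}(e)), the rule $(\bigvee)$ applied to $\psi$ with witness $\gamma$ has all side conditions met ($\gamma \in \mathcal{H}[k(\psi)\cup\{\gamma\}](\emptyset)$, $\gamma \prec \eta_\gamma$, $\omega \cdot \rk(\psi_\gamma) \prec \eta_\gamma$) and, after a further weakening to introduce $\neg\psi$, yields
\[
\mathcal{H}[k(\psi) \cup \{\gamma\}] \vdash^{\eta_\gamma}_0 \psi, \neg\psi, \neg\psi_\gamma.
\]
A single application of $(\bigwedge)$ to $\neg\psi$ then produces $\mathcal{H}[k(\psi)] \vdash^{\omega \cdot \rk(\psi)}_0 \psi, \neg\psi$, since the required premise operator $\mathcal{H}[k(\psi)][\{\gamma\}]$ coincides with the one already in hand.

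The only genuine obstacle is the ordinal arithmetic at the final $(\bigwedge)$: one must verify $\eta_\gamma \prec \omega \cdot \rk(\psi)$ for every $\gamma \prec \delta$. This works because $\omega \cdot \rk(\psi)$ is a limit ordinal in the inductive step (as $\rk(\psi) \succ 0$), and both constituents of the max are strictly below it: the inequality $\omega \cdot \rk(\psi_\gamma) \prec \omega \cdot \rk(\psi)$ comes straight from Exercise~\ref{ex:ranks}, while $\gamma \prec \omega \cdot \rk(\psi)$ is a routine case-split over Definition~\ref{def:disj-conj} (for $\psi = I_\varphi^{\prec\alpha} t$ one has $\gamma \prec \alpha \preceq \omega^2 \cdot \alpha = \omega \cdot \rk(\psi)$; for the propositional and quantifier cases $\gamma \prec \omega$ while $\omega \cdot \rk(\psi) \succeq \omega$). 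Everything else is bookkeeping to match operators and ranks against the initial condition of Definition~\ref{def:H-controlled-proofs}.
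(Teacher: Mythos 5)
Your proof is correct and follows essentially the same route as the paper's: induction on $\rk(\psi)$, the inclusion $k(\psi_\gamma)\subseteq k(\psi)\cup\{\gamma\}$ with weakening, an application of $(\bigvee)$ at height $\max\{\gamma,\omega\cdot\rk(\psi_\gamma)\}+1$, and a final $(\bigwedge)$ justified by $\gamma\prec\omega\cdot\rk(\psi)$ and Exercise~\ref{ex:ranks}. The only cosmetic differences are that you take $\psi$ disjunctive where the paper takes it conjunctive, and that you treat the literal case separately rather than as the empty instance of $(\bigwedge)$.
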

\begin{proof}
We argue by induction on~$\rk(\psi)$. Due to the duality between conjunctive and disjunctive formulas, we may assume $\psi\simeq\bigwedge_{\gamma\prec\delta}\psi_\gamma$ and $\neg\psi\simeq\bigvee_{\gamma\prec\delta}\neg\psi_\gamma$. Note that $k(\psi_\gamma)\subseteq k(\psi)\cup\{\gamma\}$ holds in all cases from Definition~\ref{def:disj-conj}. Due to the weakening result from Exercise~\ref{ex:weakening}, the induction hypothesis will thus yield
\begin{equation*}
\mathcal H[k(\psi)][\gamma]\vdash^{\omega\cdot\rk(\psi_\gamma)}_0\psi,\neg\psi,\psi_\gamma,\neg\psi_\gamma
\end{equation*}
for each~$\gamma\prec\delta$. We can conclude
\begin{equation*}
\mathcal H[k(\psi)][\gamma]\vdash^{\alpha(\gamma)}_0\psi,\neg\psi,\psi_\gamma\quad\text{for}\quad\alpha(\gamma):=\max\{\omega\cdot\rk(\psi_\gamma),\gamma\}+1
\end{equation*}
by clause~($\bigvee$) of Definition~\ref{def:H-controlled-proofs}, as the maximum ensures $\gamma\prec\alpha(\gamma)$. It is not hard to check that we have $\gamma\prec\omega\cdot\rk(\psi)$ for any $\gamma\prec\delta$. Together with the facts that were proved in Exercises~\ref{ex:ranks} and~\ref{ex:operators}, this yields
\begin{equation*}
\alpha(\gamma)\prec\omega\cdot\rk(\psi)\in\mathcal H[k(\psi)](\emptyset).
\end{equation*}
We can thus conclude by clause ($\bigwedge$) of Definition~\ref{def:H-controlled-proofs}.
\end{proof}

As mentioned in the previous section, our embedding of~$\lid$ into~$\lido$ identifies the predicate symbols $I_\varphi$ and~$I_\varphi^{\prec\Omega}$. Due to the rule ($\mathsf{Fix}$) of our infinitary proof system, we can now derive the instances of axiom schema~(F) from Section~\ref{sect:ind-def}.

\begin{proposition}
For each operator form~$\varphi(x,X)$, there is an $n\prec\omega$ with
\begin{equation*}
\mathcal H\vdash^{\Omega+\omega\cdot n}_0\forall x\in\mathbb N\,\left(\varphi\left(x,I_\varphi^{\prec\Omega}\right)\to I_\varphi^{\prec\Omega}x\right).
\end{equation*}
\end{proposition}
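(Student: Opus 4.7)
The plan is to derive, uniformly in each numeral $n\prec\omega$, the implication $\varphi(n,I_\varphi^{\prec\Omega})\to I_\varphi^{\prec\Omega}n$ by using clause $(\mathsf{Fix})$, and then assemble these into the universal statement via one application of clause $(\bigwedge)$, recalling $\forall x\in\mathbb N.\,\theta(x)\simeq\bigwedge_{n\prec\omega}\theta(n)$.

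Fix a numeral $n$ and abbreviate $\psi:=\varphi(n,I_\varphi^{\prec\Omega})$. Each literal $I_\varphi^{\prec\Omega}t$ has rank $\omega\cdot\Omega=\Omega$, and the finitely many propositional and quantifier connectives in $\varphi$ contribute a fixed finite offset $m\prec\omega$, so $\rk(\psi)=\Omega+m$; using the syntactic identity $1+\Omega=\Omega$, one checks $\omega\cdot\rk(\psi)=\Omega+\omega\cdot m$. Since $k(\psi)=\{\Omega\}\subseteq\mathcal H(\emptyset)$ by niceness (Exercise~\ref{ex:operators}(d)), Lemma~\ref{lem:LEM} yields
\begin{equation*}
\mathcal H\vdash^{\Omega+\omega\cdot m}_0\psi,\neg\psi.
\end{equation*}
Weakening (Exercise~\ref{ex:weakening}(b)) adjoins the literal $I_\varphi^{\prec\Omega}n$ to the sequent without raising the ordinal, after which clause $(\mathsf{Fix})$ — applicable because $\Omega\preceq\Omega+\omega\cdot m+1$ and $I_\varphi^{\prec\Omega}n$ is now in the sequent — yields
\begin{equation*}
\mathcal H\vdash^{\Omega+\omega\cdot m+1}_0\neg\psi,I_\varphi^{\prec\Omega}n.
\end{equation*}
Since $\psi\to I_\varphi^{\prec\Omega}n$ is by definition the disjunction $\neg\psi\lor I_\varphi^{\prec\Omega}n$, two applications of clause~$(\bigvee)$ (each first adding the disjunction itself by weakening, then invoking the rule on one disjunct at a time) assemble the target sentence, bringing the height to some $\beta\prec\Omega+\omega\cdot(m+1)$.

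Because $\beta$ does not depend on $n$, a single application of clause~$(\bigwedge)$ with bound $\alpha:=\Omega+\omega\cdot(m+1)$ now closes the derivation: the hypothesis requires a premise derivation in $\mathcal H[\{n\}]$ for each $n\prec\omega$, but numerals lie in $\mathcal H(\emptyset)$ by niceness, so $\mathcal H[\{n\}]=\mathcal H$ (Exercise~\ref{ex:operators}(c)), and the required control conditions $\{\alpha\}\cup k(\Gamma)\subseteq\mathcal H(\emptyset)$ reduce to $\Omega\in\mathcal H(\emptyset)$. Setting $n:=m+1$ then gives the claim.

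The proof is essentially bookkeeping; the conceptual step is the deployment of $(\mathsf{Fix})$, whose design is tailored precisely to this task. The main points to watch carefully are the ordinal-arithmetic identity $\omega\cdot(\Omega+m)=\Omega+\omega\cdot m$ (which relies on $1+\Omega=\Omega$ and the clauses defining multiplication), the estimate $\rk(\varphi(n,I_\varphi^{\prec\Omega}))=\Omega+m$, and the verification at each stage that every ordinal parameter occurring in the current sequent — most importantly $\Omega$ and the offsets $\omega\cdot m+k$ — belongs to $\mathcal H(\emptyset)$.
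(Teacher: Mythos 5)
Your proposal is correct and follows essentially the same route as the paper's proof: invoke Lemma~\ref{lem:LEM} for $\varphi(n,I_\varphi^{\prec\Omega}),\neg\varphi(n,I_\varphi^{\prec\Omega})$, weaken in $I_\varphi^{\prec\Omega}n$, apply $(\mathsf{Fix})$, assemble the implication with two uses of $(\bigvee)$, and close with $(\bigwedge)$, with the same ordinal bookkeeping $\omega\cdot(\Omega+m)=\Omega+\omega\cdot m$ and final bound $\Omega+\omega\cdot(m+1)$. The only cosmetic difference is that the paper states $\rk(\varphi(n,I_\varphi^{\prec\Omega}))\preceq\Omega+m$ rather than equality, which covers the degenerate case where $X$ does not occur in $\varphi$; weakening absorbs this anyway.
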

\begin{proof}
For $\psi:=\varphi(x,I_\varphi^{\prec\Omega})$ we have $k(\psi)\subseteq\{\Omega\}$ and $\rk(\psi)\preceq\Omega+m$ for some~$m\in\mathbb N$, as $\varphi$ is an~$\lpax$-formula and we have $\omega\cdot\Omega=\Omega$. From Exercise~\ref{ex:operators} we know~$\mathcal H[\Omega]=\mathcal H$. For any~$k\prec\omega$, the previous lemma and weakening will thus yield
\begin{equation*}
\mathcal H\vdash^{\omega\cdot(\Omega+m)}_0\neg\varphi(k,I_\varphi^{\prec\Omega}),\varphi(k,I_\varphi^{\prec\Omega}),I_\varphi^{\prec\Omega} k.
\end{equation*}
We can now apply clause~($\mathsf{Fix}$) from Definition~\ref{def:H-controlled-proofs} to get
\begin{equation*}
\mathcal H\vdash^{\omega\cdot(\Omega+m)+1}_0\neg\varphi(k,I_\varphi^{\prec\Omega}),I_\varphi^{\prec\Omega} k.
\end{equation*}
Two applications of~($\bigvee$) transform $\neg\varphi(k,I_\varphi^{\prec\Omega}),I_\varphi^{\prec\Omega} k$ into $\neg\varphi(k,I_\varphi^{\prec\Omega})\lor I_\varphi^{\prec\Omega} k$, which denotes the same formula as $\varphi(k,I_\varphi^{\prec\Omega})\to I_\varphi^{\prec\Omega} k$, since we work with negation normal forms (cf.~the proof of Theorem~3.7 from~\cite{first-course}). Since~$k$ was arbitrary, we can use~($\bigwedge$) to conclude with a bound $\omega\cdot(\Omega+m)+i\prec\omega\cdot(\Omega+m+1)=\Omega+\omega\cdot(m+1)$.
\end{proof}

For the embedding of axiom schema~(L) from Section~\ref{sect:ind-def}, we will use the following monotonicity result, which coincides with Lemma~9.5.4 of~\cite{pohlers-proof-theory}.

\begin{exercise}
Consider $\omega\preceq\alpha$ and assume that $\mathcal H\vdash^\alpha_\rho\Gamma,\neg\theta(s),\psi(s)$ holds for every closed $\lpa$-term~$s$. Given an operator form~$\varphi$, show that there is an $n\in\mathbb N$ such that any closed $\lpa$-terms~$t$ validates
\begin{equation*}
\mathcal H\vdash^{\alpha+2n}_\rho\Gamma,\neg\varphi(t,\theta),\varphi(t,\psi).
\end{equation*}
\emph{Hint:} Use induction over~$\varphi$. The point is that operator forms are positive in~$X$, so that $\varphi$ has no subformula~$\neg Xs$. As $\varphi$ is an~$\lpax$-formula, we have $k(\varphi(t,\psi))\subseteq k(\psi)$. Ranks were defined for~$\lido$-formulas only, but at least intuitively, $n$ is the rank of~$\varphi$. The assumption $\omega\preceq\alpha$ ensures that clause~($\bigvee$) of Definition~\ref{def:H-controlled-proofs} can be applied when $\varphi$ begins with a quantifier over~$\mathbb N$.
\end{exercise}

To make our embedding of $\lid$ into~$\lido$ explicit, we write $\psi^+$ for the formula that results from $\psi$ when each predicate symbol $I_\varphi$ is replaced by~$I_\varphi^{\prec\Omega}$. Note that we always have $k(\psi^+)\subseteq\{\Omega\}\subseteq\mathcal H(\emptyset)$. The following result provides an embedding of axiom schema~(L). It also yields an example of a proof that has height above~$\Omega$ even though it does not use clause~($\mathsf{Fix}$). Note that a somewhat similar argument was used to embed the induction axiom in the proof of Theorem~3.7 from~\cite{first-course}.

\begin{proposition}\label{prop:deriv-L}
For any operator form~$\varphi$ and any $\lid$-formula~$\psi(x)$, we have
\begin{equation*}
\mathcal H\vdash^{\Omega\cdot 2+n}_0\forall x\in\mathbb N\,\left(\varphi(x,\psi^+)\to\psi^+(x)\right)\to\forall x\in\mathbb N\,\left(I_\varphi^{\prec\Omega}x\to\psi^+(x)\right)
\end{equation*}
for some~$n\prec\omega$ (where $\Omega\cdot 2$ denotes $\Omega+\Omega$).
\end{proposition}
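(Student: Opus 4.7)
The plan is to prove, by transfinite induction on $\alpha\preceq\Omega$, that for every closed term~$t$,
\begin{equation*}
\mathcal H[\alpha]\vdash^{L(\alpha)}_0 \neg H,\,\neg I_\varphi^{\prec\alpha}t,\,\psi^+(t),
\end{equation*}
where $H:=\forall x\,(\varphi(x,\psi^+)\to\psi^+(x))$ is the hypothesis of the induction schema and $L(\alpha):=\Omega+\omega\cdot k+\omega\cdot\alpha+c$ for natural numbers $k,c$ depending only on~$\varphi$ and~$\psi$. Specialising to $\alpha=\Omega$, and using $\omega\cdot\Omega=\Omega$ together with the additive indecomposability of $\Omega$ as an $\varepsilon$-number (so that $\omega k+\Omega=\Omega$), the height collapses to $\Omega\cdot 2+c$, while $\mathcal H[\Omega]=\mathcal H$ since $\Omega\in\mathcal H(\emptyset)$. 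Two $(\bigvee)$ steps then form $\neg I_\varphi^{\prec\Omega}t\vee\psi^+(t)$, one $(\bigwedge)$ step over the numerals forms the universal $\forall x\,(I_\varphi^{\prec\Omega}x\to\psi^+(x))$, and two final $(\bigvee)$ steps build the outer implication $H\to\forall x\,(I_\varphi^{\prec\Omega}x\to\psi^+(x))$; the cumulative cost is a finite increment, so the final bound has the form $\Omega\cdot 2+n$ with $n\prec\omega$ as required.

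For the base case $\alpha=0$, the formula $\neg I_\varphi^{\prec 0}t\simeq\bigwedge_{\gamma\prec 0}\neg\varphi(t,I_\varphi^{\prec\gamma})$ is the empty conjunction, so clause $(\bigwedge)$ with no premises delivers the sequent at any admissible height. For the inductive step, apply $(\bigwedge)$ to $\neg I_\varphi^{\prec\alpha}t$: for each $\gamma\prec\alpha$ one has to derive $\neg H,\neg I_\varphi^{\prec\alpha}t,\psi^+(t),\neg\varphi(t,I_\varphi^{\prec\gamma})$ under $\mathcal H[\alpha,\gamma]$. To consume the assumption~$H$ without using the cut rule, apply $(\bigvee)$ to $\neg H\simeq\bigvee_{n\prec\omega}(\varphi(n,\psi^+)\wedge\neg\psi^+(n))$ at index~$t$, and then apply $(\bigwedge)$ to split the resulting conjunction $\varphi(t,\psi^+)\wedge\neg\psi^+(t)$. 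The first sub-premise (carrying $\varphi(t,\psi^+)$) is supplied by the induction hypothesis $\mathcal H[\gamma]\vdash^{L(\gamma)}_0 \neg H,\neg I_\varphi^{\prec\gamma}s,\psi^+(s)$: applying the monotonicity exercise to the operator form~$\varphi$ yields $\mathcal H[\gamma]\vdash^{L(\gamma)+2k_\varphi}_0 \neg H,\neg\varphi(t,I_\varphi^{\prec\gamma}),\varphi(t,\psi^+)$ for a constant $k_\varphi$, and this is then weakened via Exercise~\ref{ex:weakening}. The second sub-premise (carrying $\neg\psi^+(t)$) is supplied by the tautology $\mathcal H\vdash^{\omega\cdot\rk(\psi^+(t))}_0 \psi^+(t),\neg\psi^+(t)$ of Lemma~\ref{lem:LEM}, weakened in the same way.

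The main obstacle is twofold. First, the cut rule is unavailable at $\rho=0$, so $H$ has to be used via the $(\bigvee)$-then-$(\bigwedge)$ decomposition of $\neg H$ above, in lieu of an outright cut on $\varphi(t,\psi^+)$. Second, the bound $L(\alpha)$ has to dominate both branches of the inductive step simultaneously: the recursive branch contributes $L(\gamma)+2k_\varphi+O(1)$, which fits because $\omega\cdot\gamma+O(1)\prec\omega\cdot\alpha$ for $\gamma\prec\alpha$, while the LEM branch contributes $\Omega+\omega\cdot k_\psi+O(1)$, where $k_\psi$ bounds the rank-depth of~$\psi^+$ (so that $\rk(\psi^+(t))\preceq\Omega+k_\psi$ and hence $\omega\cdot\rk(\psi^+(t))\preceq\Omega+\omega k_\psi$). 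This LEM contribution is precisely what forces the summand $\omega\cdot k$ in $L(\alpha)$; the otherwise mysterious collapse at $\alpha=\Omega$ then proceeds because the additive indecomposability of $\Omega$ absorbs both $\omega k$ and $\omega\Omega$ into the leading $\Omega$'s, leaving the required bound $\Omega\cdot 2+n$ with $n$ natural. The parameter-control conditions $\{L(\alpha)\}\cup k(\Gamma)\subseteq\mathcal H[\alpha,\gamma](\emptyset)$ are routine, since $\psi^+$ contributes only $\Omega$ to any $k$-set, while $\alpha$ and $\gamma$ enter the operator exactly where they first appear.
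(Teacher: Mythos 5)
Your proposal is correct and follows essentially the same route as the paper: induction on $\alpha\preceq\Omega$ for the sequent $\neg\cl_\varphi(\psi^+),\neg I_\varphi^{\prec\alpha}t,\psi^+(t)$, with the monotonicity exercise feeding the $\varphi(t,\psi^+)$-branch, Lemma~\ref{lem:LEM} feeding the $\neg\psi^+(t)$-branch, a $(\bigwedge)$-then-$(\bigvee)$ step to discharge $\neg\cl_\varphi(\psi^+)$, and a final $(\bigwedge)$ over $\gamma\prec\alpha$. The only cosmetic difference is your bound $\Omega+\omega\cdot k+\omega\cdot\alpha+c$ in place of the paper's $\omega\cdot\beta+\omega\cdot\delta$ with $\beta=\max\{\rk(\psi^+),1\}$; both collapse to $\Omega\cdot 2+n$ at $\alpha=\Omega$ by the same absorption argument.
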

\begin{proof}
Let us recall that $\cl_\varphi(\psi^+)$ abbreviates the premise of the desired implication. For $\beta:=\max\{\rk(\psi^+),1\}$ we shall use induction on~$\delta\preceq\Omega$ to show that
\begin{equation*}
\mathcal H[\delta]\vdash^{\omega\cdot\beta+\omega\cdot\delta}_0\neg\cl_\varphi(\psi^+),\neg I_\varphi^{\prec\delta} t,\psi^+(t)
\end{equation*}
holds for every closed~$\lpa$-term~$t$. The result for~$\delta=\Omega$ yields the proposition, as we find an $m\prec\omega$ with $\rk(\psi^+)\preceq\Omega+m$ and hence
\begin{equation*}
\omega\cdot\beta+\omega\cdot\Omega+n\preceq\Omega+\omega\cdot m+\Omega+n=\Omega\cdot 2+n.
\end{equation*}
For any~$\gamma\prec\delta$, the induction hypothesis and previous exercise yield a $k\in\mathbb N$ with
\begin{equation*}
\mathcal H[\gamma]\vdash^{\omega\cdot\beta+\omega\cdot\gamma+2k}_0\neg\cl_\varphi(\psi^+),\neg\varphi(t,I_\varphi^{\prec\gamma}),\varphi(t,\psi^+).
\end{equation*}
As Lemma~\ref{lem:LEM} provides $\mathcal H\vdash^{\omega\cdot\beta}_0\neg\psi^+(t),\psi^+(t)$, we can use clause~($\bigwedge$) to infer
\begin{equation*}
\mathcal H[\gamma]\vdash^{\omega\cdot\beta+\omega\cdot\gamma+2k+1}_0\neg\cl_\varphi(\psi^+),\neg\varphi(t,I_\varphi^{\prec\gamma}),\varphi(t,\psi^+)\land\neg\psi^+(t),\psi^+(t).
\end{equation*}
Now observe that we have
\begin{equation*}
\neg\cl_\varphi(\psi^+)=\exists x\in\mathbb N\left(\varphi(x,\psi^+)\land\neg\psi^+(x)\right)\simeq\textstyle\bigvee_{n\prec\omega}\varphi(n,\psi^+)\land\neg\psi^+(n).
\end{equation*}
As in Exercise~3.5 from the first lecture~\cite{first-course}, we can replace any occurrence of~$t$ by the numeral with the same value (since the notion of false literal in Definition~\ref{def:disj-conj} is unaffected). By clause~($\bigvee$) we thus obtain
\begin{equation*}
\mathcal H[\gamma]\vdash^{\omega\cdot\beta+\omega\cdot\gamma+2(k+1)}_0\neg\cl_\varphi(\psi^+),\neg\varphi(t,I_\varphi^{\prec\gamma}),\psi^+(t).
\end{equation*}
In view of $\neg I^{\prec\delta}_\varphi t\simeq\bigwedge_{\gamma\prec\delta}\neg\varphi(t,I_\varphi^{\prec\gamma})$ we can conclude by clause~($\bigwedge$). Note that we must first weaken $\mathcal H$ to $\mathcal H[\delta]$ in order to accommodate the parameter~$\delta$.
\end{proof}

Let us now derive that $\id_1$ can be embedded into our infinitary proof system.

\begin{theorem}[`Embedding']\label{thm:embedding}
Given $\id_1\vdash\psi$, we obtain $\mathcal H\vdash^{\Omega\cdot 2+n}_{\Omega+m}\psi^+$ for any nice operator $\mathcal H$ and some $m,n\in\mathbb N$.
\end{theorem}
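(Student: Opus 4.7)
The plan is to proceed by induction on the length of the given $\id_1$-derivation of $\psi$, showing that there exist $m,n\prec\omega$ --- depending on the derivation but not on the operator --- with $\mathcal H\vdash^{\Omega\cdot 2+n}_{\Omega+m}\psi^+$ for every nice $\mathcal H$. That both bounds have this shape is crucial: the class $\{\Omega\cdot 2+n\,|\,n\prec\omega\}$ is closed under the successor-style increments produced by single applications of the infinitary rules, and every $\lid$-formula $\theta$ satisfies $\rk(\theta^+)\preceq\Omega+k$ for some finite $k$ (since $\rk(I_\varphi^{\prec\Omega}t)=\Omega$ and the remaining Boolean and number-quantifier connectives contribute only finitely), so a fixed cut rank of the form $\Omega+m$ accommodates every cut formula that arises.

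For the axioms I would proceed schema by schema. Propositional tautologies arise from Lemma~\ref{lem:LEM} by a few applications of $(\bigvee)$; the equality axioms and the axioms of Robinson arithmetic are handled with purely finite ordinal bounds by unfolding their conjunctive and disjunctive structure. The induction axiom for an $\lid$-formula $\theta(x)$ is embedded by an auxiliary induction on $n\prec\omega$ that derives $\theta^+(n)$ from the premises $\theta^+(0)$ and $\forall x\,(\theta^+(x)\to\theta^+(x+1))$ at some height $\omega\cdot\rk(\theta^+)+\ell_n$ with $\ell_n\prec\omega$; a single $(\bigwedge)$ then produces $\forall x\in\mathbb N\,.\,\theta^+(x)$ at a height $\prec\Omega\cdot 2$. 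Finally, the two distinctively $\id_1$ axiom schemata are exactly what we have just prepared: the preceding proposition embeds schema~(F) at height $\Omega+\omega\cdot n\prec\Omega\cdot 2$, and Proposition~\ref{prop:deriv-L} embeds schema~(L) at height $\Omega\cdot 2+n$ --- the latter being the worst case and dictating the overall bound.

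For the inductive step, modus ponens is simulated by $(\mathsf{Cut})$ on the cut formula $\theta^+$, whose rank is bounded by $\Omega+k$ for a finite $k$; choosing $m$ large enough absorbs all such cuts into the fixed rank $\Omega+m$. Universal generalisation over a free number variable is simulated by substituting each numeral into the inductively obtained derivation and combining the results by $(\bigwedge)$, with the weakening of Exercise~\ref{ex:weakening} first aligning operator controls. The main technical obstacle is bookkeeping rather than conceptual: after every step one must verify that the new ordinal still has the form $\Omega\cdot 2+n$ with $n\prec\omega$, that $\Omega+m$ continues to dominate the rank of every cut formula, and that all ordinals and ranks appearing in the derivation lie in $\mathcal H(\emptyset)$ --- the last of these following from the closure of nice operators under addition, multiplication by $\omega$ and the function $\omega(\cdot)$ established in Exercise~\ref{ex:operators}(e). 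Uniformity in $\mathcal H$ comes for free, since every ingredient invoked holds for an arbitrary nice operator.
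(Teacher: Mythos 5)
Your argument is correct in outline and reaches the right bounds, but it organizes the logical scaffolding differently from the paper. You induct directly on the given $\id_1$-derivation, simulating modus ponens by cuts (of rank $\prec\Omega+m$, using your correct observation that $\rk(\theta^+)\preceq\Omega+k$ for every $\lid$-formula~$\theta$) and generalisation by numeral substitution plus~($\bigwedge$); this forces you to verify at every inference that the height stays in the class $\{\Omega\cdot 2+n\,|\,n\prec\omega\}$. The paper instead first rewrites $\id_1\vdash\psi$ as a \emph{cut-free} derivation in the sequent calculus $\mathsf{PL}$ of the sequent $\neg\theta_0,\ldots,\neg\theta_{k-1},\psi$ for the finitely many axioms $\theta_i$ actually used, embeds that derivation with cut rank~$0$ and height $\Omega+\omega\cdot l$ (the only substantial case being the axioms $\Delta,\theta,\neg\theta$, covered by Lemma~\ref{lem:LEM}), and only then performs $k$ cuts at the root against separately embedded axioms $\mathcal H\vdash^{\Omega\cdot 2+n(i)}_0\theta_i^+$. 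That concentrates all cuts at the very end and makes the height bookkeeping trivial; your route avoids the appeal to cut-free completeness of $\mathsf{PL}$ but pays with the closure argument for the height class and, if modus ponens is literally cut on $\theta^+$ rather than on the implication formula, an implicit $\lor$-inversion step that you should make explicit. The treatment of the axioms themselves is the same in both proofs: Lemma~\ref{lem:LEM} for logical axioms, the two preceding propositions for~(F) and~(L), and the method from the first course for induction. One small correction: the equality axiom for $I_\varphi$ is not embedded with a purely finite bound --- when $p$ and $q$ are equal numerals one falls back on Lemma~\ref{lem:LEM} for $I_\varphi^{\prec\Omega}q$, which gives height $\omega\cdot\rk(I_\varphi^{\prec\Omega}q)=\Omega$; this is of course still harmless for the overall bound $\Omega\cdot 2+n$.
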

\begin{proof}
Recall the finitary sequent calculus~$\mathsf{PL}$ for predicate logic that was presented in Section~2 of the first lecture~\cite{first-course}, where we proved that $\mathsf{PL}$ is complete without the cut rule. Given $\id_1\vdash\psi$, we have a cut free derivation
\begin{equation*}
\mathsf{PL}\vdash\neg\theta_0,\ldots,\neg\theta_{k-1},\psi
\end{equation*}
for axioms $\theta_i$ of~$\id_1$. By induction over derivations in~$\mathsf{PL}$, we find an $l\in\mathbb N$ with
\begin{equation*}
\mathcal H\vdash^{\Omega+\omega\cdot l}_0\neg\theta_0^+,\ldots,\neg\theta_{k-1}^+,\psi^+.
\end{equation*}
The crucial case in this induction concerns an axiom $\Delta,\theta,\neg\theta$ of $\mathsf{PL}$, for which we invoke Lemma~\ref{lem:LEM}. Some details on the case of a universal quantifier can be found in the proof of Theorem~3.7 from the previous course~\cite{first-course}. Concerning the case of an existential quantifier, we recall that each term in an infinitary derivation can be replaced by the corresponding numeral, as in the previous proof. Since all formula ranks have the form $\omega\cdot\alpha+j$ with $\alpha\preceq\Omega$, there is an $m\in\mathbb N$ with $\rk(\theta_i^+)\prec\Omega+m$ for all~$i<k$. To conclude by a series of cuts, it thus remains to find $n(i)$ with
\begin{equation*}
\mathcal H\vdash^{\Omega\cdot 2+n(i)}_0\theta_i\quad\text{for all }i<k.
\end{equation*}
When $\theta_i$ is an instance of axiom schema~(F) or~(L), this is accomplished by the previous propositions. The axioms of~$\mathsf{PA}\subseteq\mathsf{ID_1}$, including induction for the extended language~$\lid$, are treated as in the proof of Theorem~3.7 from~\cite{first-course}. To cover the equality axiom for~$I_\varphi$, we need derivations
\begin{equation*}
\mathcal H\vdash^{\Omega}_0 p\neq q,\neg I_\varphi^{\prec\Omega}p,I_\varphi^{\prec\Omega}q.
\end{equation*}
When $p$ and $q$ are different numerals, the true prime formula $p\neq q$ corresponds to the empty conjunction (cf.~Definition~\ref{def:disj-conj}), so that we can use clause~($\bigwedge$) without premises. In the remaining case, we can conclude by Lemma~\ref{lem:LEM}.
\end{proof}

The method that we know from the previous lecture~\cite{first-course} can be used to decrease the cut rank from $\Omega+m$ to~$\Omega+1$, as we will see in the next section. For the moment, we only recall that this method replaces a cut over~$\psi\simeq\bigvee_{\gamma\prec\delta}\psi_\gamma$ by cuts over instances $\psi_\gamma$ from which $\psi$ has been derived. Here the cut rank decreases because we have~$\rk(\psi_\gamma)\prec\rk(\psi)$. The same method cannot be used to reach cut rank~$\Omega$, because clause~($\mathsf{Fix}$) from Definition~\ref{def:H-controlled-proofs} allows to infer the disjunctive formula $I^{\prec\Omega}_\varphi t$ from the premise~$\varphi(t,I_\varphi^{\prec\Omega})$, even though we will generally have
\begin{equation*}
\rk(I^{\prec\Omega}_\varphi t)=\Omega\prec\rk(\varphi(t,I_\varphi^{\prec\Omega})).
\end{equation*}
To resolve this issue, we shall now present the aforementioned method of collapsing, which performs three different tasks: First, it transforms uncountable proof heights above~$\Omega$ into countable ones below. Secondly, this means that it must eliminate occurrences of clause~($\mathsf{Fix}$), since the condition~$\Omega\preceq\alpha$ from Definition~\ref{def:H-controlled-proofs} will no longer be satisfied. Finally, it will indeed decrease the cut rank from $\Omega+1$ to an ordinal below~$\Omega$.

One should not expect that all proofs can be collapsed to countable height. The point is that sentences of the form $\neg I_\varphi^{\prec\Omega}t\simeq\bigwedge_{\gamma\prec\Omega}\neg\varphi(t,I_\varphi^{\prec\gamma})$ are derived with \mbox{$\Omega$-}many premises, which will typically yield derivations with height above~$\Omega$ (see the proof of Proposition~\ref{prop:deriv-L} for an instructive example). To avoid this obstruction, we declare that a $\Sigma(\Omega)$-formula is an $\lido$-formula that does not have subformulas of the form~$\neg I^{\prec\Omega}_\varphi t$ (so that $I^{\prec\Omega}_\varphi$ does only occur positively). By a $\Sigma(\Omega)$-sequent we mean a sequent that does only contain $\Sigma(\Omega)$-formulas. An infinite derivation that consists of $\Sigma(\Omega)$-formulas is countably branching and hence of countable height.

We can now observe that the three tasks above are interdependent: To obtain countable proofs, we must reduce the cut rank to~$\Omega$, since a cut over $I^{\prec\Omega}_\varphi t$ would introduce a premise $\neg I^{\prec\Omega}_\varphi t$ that is no $\Sigma(\Omega)$-formula. To reduce the cut rank, we must eliminate applications of~($\mathsf{Fix}$), as explained above. To replace a derivation of~$I_\varphi^{\prec\Omega}t$ via~($\mathsf{Fix}$), we will first collapse the proof of the premise $\varphi(t,I_\varphi^{\prec\Omega})$ to some countable height~$\gamma\prec\Omega$. Then the boundedness result from Theorem~\ref{thm:boundedness} will yield a proof of $\varphi(t,I_\varphi^{\prec\gamma})$, so that $I_\varphi^{\prec\Omega}t\simeq\bigvee_{\gamma\prec\Omega}\varphi(t,I_\varphi^{\prec\gamma})$ can be re{-}derived by clause~($\bigvee$). Details can be found in the proof of Theorem~\ref{thm:collapsing} below. The following exercise provides a final ingredient, which is somewhat similar to the inversion result from the previous lecture~\cite{first-course}.

\begin{exercise}\label{ex:bounded-inversion}
Given $\mathcal H\vdash^\alpha_\rho\Gamma,\neg I^{\prec\Omega}_\varphi t$, show $\mathcal H\vdash^\alpha_\rho\Gamma,\neg I^{\prec\delta}_\varphi t$ for $\delta\prec\Omega$ with $\delta\in\mathcal H(\emptyset)$.
\end{exercise}

Finally, we present the centerpiece of our ordinal analysis (adapted from~\cite{buchholz-local-predicativity}). The reader may wish to recall the end of Section~\ref{sect:Bachmann-Howard} for some relevant notation.

\begin{theorem}[`Collapsing' or `Impredicative cut elimination']\label{thm:collapsing}
Given that $\Gamma$ is a $\Sigma(\Omega)$-sequent and that we have $\alpha\in\mathcal H_\alpha(X)$ and $X\subseteq\bigcap\{C_\xi(\vartheta\xi)\,|\,\alpha\prec\xi\}$, we get
\begin{equation*}
\mathcal H_\alpha[X]\vdash^\beta_{\Omega+1}\Gamma\qquad\Rightarrow\qquad\mathcal H_\eta[X]\vdash^{\vartheta\eta}_{\vartheta\eta}\Gamma\quad\text{for}\quad\eta=\alpha+\omega(\beta).
\end{equation*}
\end{theorem}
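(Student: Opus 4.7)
The plan is to proceed by induction on $\beta$, with a case distinction on the last rule of $\mathcal{H}_\alpha[X]\vdash^\beta_{\Omega+1}\Gamma$. From $\alpha,\beta\in\mathcal{H}_\alpha(X)$ (the hypothesis $\alpha\in\mathcal{H}_\alpha(X)$ together with the initial condition of Definition~\ref{def:H-controlled-proofs}), closure of nice operators under $+$ and $\omega$ (Exercise~\ref{ex:operators}(e)) gives $\eta\in\mathcal{H}_\alpha(X)\subseteq\mathcal{H}_\eta(X)$ by Proposition~\ref{prop:H-basic}(a), and then $\vartheta\eta\in\mathcal{H}_\eta(X)$ by Proposition~\ref{prop:H-basic}(b), so that the parameter condition for the target derivation is automatic. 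Unpacking the definition of $\mathcal{H}_\alpha$, the assumption $X\subseteq\bigcap\{C_\xi(\vartheta\xi)\mid\alpha\prec\xi\}$ strengthens to $\mathcal{H}_\alpha(X)\subseteq C_\xi(\vartheta\xi)$ for every $\xi\succ\alpha$, which combined with Proposition~\ref{prop:vartheta-set-def} yields the master bound
\begin{equation*}
\mathcal{H}_\alpha(X)\cap\Omega\;\prec^*\;\vartheta\xi\qquad\text{for every }\xi\succ\alpha,
\end{equation*}
which I will invoke repeatedly to place small parameters below $\vartheta\eta$. Whenever an intermediate ordinal $\eta'=\alpha+\omega(\beta')$ with $\beta'\prec\beta$ arises, $\vartheta\eta'\prec\vartheta\eta$ follows from Proposition~\ref{prop:H-basic}(c), using that each $\omega(\beta_i)$ is an additive principal number (either an $\varepsilon$-number or of the form $\omega^{\beta_i}$), so that finite sums $\omega(\beta_1)+\cdots+\omega(\beta_k)$ with $\beta_i\prec\beta$ stay strictly below $\omega(\beta)$.

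The clauses $(\bigvee)$ and $(\bigwedge)$ are routine given these ingredients. For a disjunctive premise $\mathcal{H}_\alpha[X]\vdash^{\beta'}_{\Omega+1}\Gamma,\psi_\gamma$ with $\psi\in\Gamma$ and chosen witness $\gamma\in\mathcal{H}_\alpha(X)$, the sequent $\Gamma,\psi_\gamma$ is still in $\Sigma(\Omega)$ (when $\psi=I^{\prec\Omega}_\varphi t$, note that $\psi_\gamma=\varphi(t,I^{\prec\gamma}_\varphi)$ contains only positive occurrences of fixed-point predicates), so the induction hypothesis yields a collapsed derivation with new height $\vartheta\eta'\prec\vartheta\eta$; re-applying $(\bigvee)$ with the same witness closes the case, because $\gamma\in\mathcal{H}_\alpha(X)\cap\Omega\prec^*\vartheta\eta$. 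For a conjunctive premise with principal $\psi\simeq\bigwedge_{\gamma\prec\delta}\psi_\gamma\in\Gamma$, the $\Sigma(\Omega)$-hypothesis rules out $\delta=\Omega$, so $\delta\in\mathcal{H}_\alpha(X)\cap\Omega$; the master bound gives $\delta\prec\vartheta\xi$ for every $\xi\succ\alpha$, hence each $\gamma\prec\delta$ satisfies $\gamma\prec\vartheta\xi$ and so belongs to $C_\xi(\vartheta\xi)$ by clause~(i) of Definition~\ref{def:coeff-sets}. This legitimises applying the induction hypothesis with side-parameter set $X\cup\{\gamma\}$ and $\alpha$ unchanged; re-applying $(\bigwedge)$ gives the conclusion.

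The impredicative work sits in the clauses $(\mathsf{Cut})$ and $(\mathsf{Fix})$ involving $I^{\prec\Omega}_\varphi$. Cuts of rank $\prec\Omega$ are harmless: both premises are then $\Sigma(\Omega)$, and $\rk(\psi)\in\mathcal{H}_\alpha(X)\cap\Omega\prec^*\vartheta\eta$ allows the cut to be re-performed after applying the induction hypothesis. The genuine difficulty is a cut on $\psi=I^{\prec\Omega}_\varphi t$, where only the positive premise $\Gamma,\psi$ lies in $\Sigma(\Omega)$. I apply the induction hypothesis to it to obtain $\mathcal{H}_{\eta_1}[X]\vdash^{\vartheta\eta_1}_{\vartheta\eta_1}\Gamma,I^{\prec\Omega}_\varphi t$ with $\eta_1=\alpha+\omega(\beta_1)$, and use boundedness (Theorem~\ref{thm:boundedness}) with the admissible parameter $\vartheta\eta_1\in\mathcal{H}_{\eta_1}(X)$ to replace $I^{\prec\Omega}_\varphi t$ by $I^{\prec\vartheta\eta_1}_\varphi t$. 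Dually, after enlarging the operator from $\mathcal{H}_\alpha$ to $\mathcal{H}_{\eta_1}$ (so that $\vartheta\eta_1$ is available), bounded inversion (Exercise~\ref{ex:bounded-inversion}) converts $\Gamma,\neg I^{\prec\Omega}_\varphi t$ into the $\Sigma(\Omega)$-sequent $\Gamma,\neg I^{\prec\vartheta\eta_1}_\varphi t$, to which a second application of the induction hypothesis delivers a derivation at height $\vartheta\eta_2$ for $\eta_2=\eta_1+\omega(\beta_2)\prec\eta$. A final cut on the reduced rank $\omega\cdot\vartheta\eta_1=\vartheta\eta_1\prec\vartheta\eta$ (exploiting that $\vartheta\eta_1$ is an $\varepsilon$-number) yields the desired derivation. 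Clause $(\mathsf{Fix})$ is handled in exactly the same spirit: the induction hypothesis and boundedness reduce the premise $\Gamma,\varphi(t,I^{\prec\Omega}_\varphi)$ to $\Gamma,\varphi(t,I^{\prec\vartheta\eta'}_\varphi)$, after which $(\bigvee)$ reinstates $I^{\prec\Omega}_\varphi t\in\Gamma$ with witness $\gamma=\vartheta\eta'\prec\vartheta\eta$, simultaneously eliminating the $(\mathsf{Fix})$-step and the constraint $\Omega\preceq\alpha$. The cut-at-$\Omega$ step is the principal obstacle and the whole point of collapsing: it is the only place where the argument must temporarily exit the $\Sigma(\Omega)$-fragment and re-enter it by trading the impredicative bound $\Omega$ for the freshly manufactured countable ordinal $\vartheta\eta_1$; all remaining work is bookkeeping that intermediate ordinals remain in the correct operators and strictly below $\vartheta\eta$.
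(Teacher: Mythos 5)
Your proposal is correct and follows essentially the same route as the paper's proof: induction on $\beta$ with the same treatment of each clause, boundedness plus bounded inversion to reduce the cut at rank $\Omega$ to a cut at rank $\vartheta\eta_1$, a second application of the induction hypothesis with the shifted base $\eta_1$, and Proposition~\ref{prop:H-basic}(c) together with the closure of $\omega(\beta)$ under addition to keep all intermediate collapses strictly below $\vartheta\eta$. The handling of $(\mathsf{Fix})$, the master bound $\mathcal H_\alpha(X)\cap\Omega\prec^*\vartheta\xi$, and the parameter bookkeeping in the $(\bigwedge)$ case all match the paper.
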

\begin{proof}
We argue by induction on~$\beta$. First observe that the premise of the desired implication entails
\begin{equation*}
\{\beta\}\cup k(\Gamma)\subseteq\mathcal H_\alpha(X),
\end{equation*}
by the initial condition from Definition~\ref{def:H-controlled-proofs}. In view of $\alpha\prec\eta$ we may replace~$\mathcal H_\alpha$ by $\mathcal H_\eta$, due to part~(a) of Proposition~\ref{prop:H-basic}. Let us also recall Exercise~\ref{ex:weakening}, in order to justify applications of weakening that are left implicit below. By Exercise~\ref{ex:operators} and Proposition~\ref{prop:H-basic}(b), we see that $\mathcal H_\eta(X)$ contains $\eta$ and hence~$\vartheta\eta$, which yields the initial condition that is required for $\mathcal H_\eta[X]\vdash^{\vartheta\eta}_{\vartheta\eta}\Gamma$. We now distinguish cases according to the clauses from Definition~\ref{def:H-controlled-proofs}. First consider an application of~($\mathsf{Fix}$), where $\Gamma$ contains a formula $I_\varphi^{\prec\Omega}t$ and there is a $\beta'\prec\beta$ with
\begin{equation*}
\mathcal H_\alpha[X]\vdash^{\beta'}_{\Omega+1}\Gamma,\varphi(t,I_\varphi^{\prec\Omega}).
\end{equation*}
Here $\varphi(t,I_\varphi^{\prec\Omega})$ is a $\Sigma(\Omega)$-formula, since the operator form~$\varphi$ does only contain positive occurrences of~$X$. Thus the induction hypothesis yields
\begin{equation*}
\mathcal H_{\eta'}[X]\vdash^{\vartheta\eta'}_{\vartheta\eta'}\Gamma,\varphi(t,I_\varphi^{\prec\Omega})\quad\text{with}\quad\eta'=\alpha+\omega(\beta').
\end{equation*}
By the boundedness result from Theorem~\ref{thm:boundedness}, we can infer
\begin{equation*}
\mathcal H_{\eta'}[X]\vdash^{\vartheta\eta'}_{\vartheta\eta'}\Gamma,\varphi(t,I_\varphi^{\prec\vartheta\eta'}).
\end{equation*}
Crucially, part~(c) of Proposition~\ref{prop:H-basic} ensures that we have $\vartheta\eta'\prec\vartheta\eta$ (since we get $\eta'\in\mathcal H_\alpha(X)$ as above). In view of $I_\varphi^{\prec\Omega}t\simeq\bigvee_{\gamma\prec\Omega}\varphi(t,I_\varphi^{\prec\gamma})$, we can use clause~($\bigvee$) to infer $\mathcal H_\eta[X]\vdash^{\vartheta\eta}_{\vartheta\eta}\Gamma$, as desired. Next, we consider an application of clause~($\mathsf{Cut}$), where the cut formula~$\psi$ will necessarily have rank~$\rk(\psi)\prec\Omega+1$. First assume that we have $\rk(\psi)=\Omega$, which means that $\psi$ must be of the form~$I_\varphi^{\prec\Omega}t$ or $\neg I_\varphi^{\prec\Omega}t$. For some~$\beta'\prec\beta$, our cut will then have premises
\begin{equation*}
\mathcal H_\alpha[X]\vdash^{\beta'}_{\Omega+1}\Gamma,I_\varphi^{\prec\Omega}t\quad\text{and}\quad\mathcal H_\alpha[X]\vdash^{\beta'}_{\Omega+1}\Gamma,\neg I_\varphi^{\prec\Omega}t.
\end{equation*}
Here $I_\varphi^{\prec\Omega}t$ is a $\Sigma(\Omega)$-formula while $\neg I_\varphi^{\prec\Omega}t$ is not, so that the induction hypothesis can only be applied to the first premise. Together with boundedness, it yields
\begin{equation*}
\mathcal H_{\eta'}[X]\vdash^{\vartheta\eta'}_{\vartheta\eta'}\Gamma,I^{\prec\vartheta\eta'}_\varphi t\quad\text{for}\quad\eta'=\alpha+\omega(\beta').
\end{equation*}
Concerning the second premise of our cut, we first replace~$\mathcal H_\alpha[X]$ by $\mathcal H_{\eta'}[X]$, in order to accommodate $\vartheta\eta'$ as a new parameter. Then we invoke Exercise~\ref{ex:bounded-inversion} to get
\begin{equation*}
\mathcal H_{\eta'}[X]\vdash^{\beta'}_{\Omega+1}\Gamma,\neg I_\varphi^{\prec\vartheta\eta'}t.
\end{equation*}
The point is that $\neg I_\varphi^{\prec\vartheta\eta'}t$ is a $\Sigma(\Omega)$-formula. Clearly we get $X\subseteq\bigcap\{C_\xi(\vartheta\xi)\,|\,\eta'\prec\xi\}$ due to $\alpha\prec\eta'$. We can thus use the induction hypothesis with $\eta'$ at the place of~$\alpha$. It allows us to conclude
\begin{equation*}
\mathcal H_{\eta''}[X]\vdash^{\vartheta\eta''}_{\vartheta\eta''}\Gamma,\neg I^{\prec\vartheta\eta'}_\varphi t\quad\text{with}\quad\eta''=\eta'+\omega(\beta')=\alpha+\omega(\beta')+\omega(\beta').
\end{equation*}
As in Exercise~5.4 from the first lecture~\cite{first-course}, one sees that $\omega(\beta)$ is closed under addition, so that we have $\eta''\prec\eta$. Using Proposition~\ref{prop:H-basic}, one can infer
\begin{equation*}
\rk(I^{\prec\vartheta\eta'}_\varphi t)=\omega\cdot\vartheta\eta'=\vartheta\eta'\prec\vartheta\eta''\prec\vartheta\eta.
\end{equation*}
We thus obtain $\mathcal H_\eta[X]\vdash^{\vartheta\eta}_{\vartheta\eta}\Gamma$ by an application of~($\mathsf{Cut}$) with cut formula~$I^{\prec\vartheta\eta'}_\varphi t$. Let us also consider the case where the original cut formula~$\psi$ has rank~$\rk(\psi)\prec\Omega$. Here $\psi$ and $\neg\psi$ are $\Sigma(\Omega)$-formulas (with no subformulas $I_\varphi^{\prec\Omega}t$), so that we can apply the induction hypothesis to both premises. After doing so, we can conclude by a cut over the same formula~$\psi$. To see that this yields the desired bound on the cut rank, we invoke Exercise~\ref{ex:operators} to get
\begin{equation*}
\rk(\psi)\in\mathcal H_\alpha(k(\psi))\subseteq\mathcal H_\alpha(X)\subseteq C_\eta(\vartheta\eta),
\end{equation*}
where the second inclusion relies on~$X\subseteq C_\eta(\vartheta\eta)$ and on Definition~\ref{def:H_gamma}. In view of $\rk(\psi)\prec\Omega$ we can invoke Proposition~\ref{prop:vartheta-set-def} to get $\rk(\psi)\prec\vartheta\eta$, as needed. In the penultimate case, we have an application of clause~($\bigvee$) to a formula~$\psi\simeq\bigvee_{\gamma\prec\delta}\psi_\gamma$. Here the premise provides
\begin{equation*}
\mathcal H_\alpha[X]\vdash^{\beta'}_{\Omega+1}\Gamma,\psi_\gamma
\end{equation*}
for some $\beta'\prec\beta$ and some $\gamma\prec\delta$ with $\gamma\in\mathcal H_\alpha(X)$. According to Definition~\ref{def:disj-conj}, we always have $\delta\preceq\Omega$. As in the previous case, we thus get $\gamma\prec\vartheta\eta$. This allows us to reapply clause~($\bigvee$) after the induction hypothesis has been used. Concerning the latter, we note that all~$\psi_\gamma$ are $\Sigma(\Omega)$-formulas when the same holds for~$\psi$ (also when $\psi$ is conjunctive). Finally, we consider an application of~($\bigwedge$) to $\psi\simeq\bigwedge_{\gamma\prec\delta}\psi_\gamma$, where we have premises
\begin{equation*}
\mathcal H_\alpha[X\cup\{\gamma\}]\vdash^{\beta(\gamma)}_{\Omega+1}\Gamma,\psi_\gamma\quad\text{with }\beta(\gamma)\prec\beta\text{ for all }\gamma\prec\delta.
\end{equation*}
Since $\psi\in\Gamma$ is a conjunctive $\Sigma(\Omega)$-formula, it cannot have the form $I_\varphi^{\prec\Omega}t$ or $\neg I_\varphi^{\prec\Omega}t$, so that we must have $\delta\prec\Omega$. In view of Definition~\ref{def:disj-conj} we have $\delta\preceq\omega$ or $\delta\in k(\Gamma)$, both of which yield~$\delta\in\mathcal H_\alpha(X)$. Given any~$\xi$ with $\alpha\prec\xi$, we can infer $\delta\in C_\xi(\vartheta\xi)$ and then $\delta\prec\vartheta\xi$ as above. Due to clause~(i) of Definition~\ref{def:coeff-sets}, we get
\begin{equation*}
\gamma\in\textstyle\bigcap\{C_\xi(\vartheta\xi)\,|\,\alpha\prec\xi\}\quad\text{for any }\gamma\prec\delta.
\end{equation*}
We may thus apply the induction hypothesis with $X\cup\{\gamma\}$ at the place of~$X$. Once we have done so, we can reapply clause~($\bigwedge$) to conclude.
\end{proof}

\section{Predicative cut elimination and the Veblen hierarchy}\label{sect:Veblen-hierarchy}

In this section, we shall first recover a method from the previous lecture~\cite{first-course}, which allows us to decrease the cut rank from $\rho+1$ to~$\rho$, provided the latter is different from~$\Omega$. Together with the results from above, this will suffice to obtain an independence result for the theory~$\pica^-$ of parameter-free $\Pi^1_1$-comprehension. We shall then present a method that is known as predicative cut elimination. It~uses the so-called Veblen functions to decrease the cut rank below limit ordinals other than~$\Omega$. Predicative cut elimination is central for the analysis of mathematically~significant theories of intermediate strength (see, e.\,g.,~\cite[Section~5.2]{rathjen-sieg-stanford}). In the context of stronger and impredicative theories, it plays an important but auxiliary role.

The three parts of the following exercise corresponds to the results of `inversion', `reduction' and `cut elimination' from Section~4 of the first lecture~\cite{first-course}. All ideas can be found in the latter, but the reader may wish to reflect on operator control and the role of clause~($\mathsf{Fix}$) from Definition~\ref{def:H-controlled-proofs}. A proof of the results can also be found in~\cite[Section~9.3.2]{pohlers-proof-theory}, albeit for a slightly different proof system.

\begin{exercise}\label{ex:basic-cut-elim}
(a) Given a conjunctive formula $\psi\simeq\bigwedge_{\gamma\prec\delta}\psi_\gamma$, show that $\mathcal H\vdash^\alpha_\rho\Gamma,\psi$ implies $\mathcal H\vdash^\alpha_\rho\Gamma,\psi_\gamma$ for any $\gamma\prec\delta$ with $\gamma\in\mathcal H(\emptyset)$. \emph{Hint:} The latter yields $\mathcal H[\gamma]=\mathcal H$. Crucially, the formulas $I^{\prec\Omega}_\varphi t$ in~($\mathsf{Fix}$) are disjunctive and hence different from~$\psi$.

(b) Show that $\mathcal H\vdash^\alpha_\rho\Gamma,\neg\psi$ and $\mathcal H\vdash^\beta_\rho\Gamma,\psi$ entail $\mathcal H\vdash^{\alpha+\beta}_\rho\Gamma$ when $\psi\simeq\bigvee_{\gamma\prec\delta}\psi_\gamma$ is disjunctive of rank $\rk(\psi)=\rho\neq\Omega$. \emph{Hint:} The last condition ensures that $\psi$ is different from the formulas $I^{\prec\Omega}_\varphi t$ in~($\mathsf{Fix}$). In the case of clause~($\bigvee$), Definition~\ref{def:H-controlled-proofs} provides $\gamma\in\mathcal H(\emptyset)$, as needed to invoke part~(a).

(c) Show that $\mathcal H\vdash^\alpha_{\rho+1}\Gamma$ implies $\mathcal H\vdash^{\omega(\alpha)}_\rho\Gamma$ for~$\rho\neq\Omega$.
\end{exercise}

Let us recall that each $\lid$-sentence~$\psi$ translates into an $\lido$-sentence~$\psi^+$ in which all occurrences of~$I_\varphi$ are replaced by~$I_\varphi^{\prec\Omega}$. The condition in the following result means that $\psi$ may only involve positive occurrences of the fixed point predicates~$I_\varphi$.

\begin{corollary}\label{cor:embed-plus-collapse}
If $\psi^+$ is a $\Sigma(\Omega)$-sentence, then we have
\begin{equation*}
\id_1\vdash\psi\qquad\Rightarrow\qquad\mathcal H_\eta\vdash^{\vartheta\eta}_{\vartheta\eta}\psi^+\quad\text{for some}\quad\eta\in\vartheta(\varepsilon_{\Omega+1}).
\end{equation*}
\end{corollary}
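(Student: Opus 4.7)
The plan is to chain together the three main results of the preceding sections: the Embedding Theorem~\ref{thm:embedding}, predicative cut elimination from Exercise~\ref{ex:basic-cut-elim}(c), and the Collapsing Theorem~\ref{thm:collapsing}. The embedding delivers an infinitary derivation with height just above~$\Omega$ and finite cut rank above~$\Omega$; predicative cut elimination brings the cut rank down to $\Omega+1$; and collapsing then replaces the height by a countable ordinal below~$\Omega$ while simultaneously lowering the cut rank to that same ordinal.

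First I would fix a nice operator, say $\mathcal H_0$, and invoke Theorem~\ref{thm:embedding} to obtain
\begin{equation*}
\mathcal H_0\vdash^{\Omega\cdot 2+n}_{\Omega+m}\psi^+
\end{equation*}
for suitable $m,n\in\mathbb N$. Applying Exercise~\ref{ex:basic-cut-elim}(c) exactly $m-1$ times then lowers the cut rank from $\Omega+m$ stepwise through $\Omega+(m-1),\ldots,\Omega+1$; each application is legitimate because each intermediate rank is distinct from~$\Omega$. The height grows into an $(m-1)$-fold iterated $\omega$-exponential $\beta:=\omega(\omega(\cdots\omega(\Omega\cdot 2+n)\cdots))$ of the original, which still lies in $\vartheta(\varepsilon_{\Omega+1})$. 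At this point we have $\mathcal H_0\vdash^{\beta}_{\Omega+1}\psi^+$.

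To set up collapsing I would put $\alpha:=\beta$ and $X:=\emptyset$. The hypotheses of Theorem~\ref{thm:collapsing} then need verification: the condition $X\subseteq\bigcap\{C_\xi(\vartheta\xi)\mid\alpha\prec\xi\}$ is trivial, and $\Gamma=\{\psi^+\}$ is a $\Sigma(\Omega)$-sequent by assumption. The remaining condition $\alpha\in\mathcal H_\alpha(\emptyset)$ follows because $\beta$ is assembled from~$\Omega$ via $+$, $\omega\cdot$ and $\omega(\cdot)$, each of which preserves membership in a nice operator by Exercise~\ref{ex:operators}(e), and $\mathcal H_\beta$ is itself nice by Exercise~\ref{ex:operators}(a). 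Monotonicity (Proposition~\ref{prop:H-basic}(a)) combined with the weakening result of Exercise~\ref{ex:weakening}(a) upgrades the derivation from $\mathcal H_0$ to $\mathcal H_\beta$, giving $\mathcal H_\beta\vdash^{\beta}_{\Omega+1}\psi^+$. Theorem~\ref{thm:collapsing} now produces $\mathcal H_\eta\vdash^{\vartheta\eta}_{\vartheta\eta}\psi^+$ with $\eta:=\beta+\omega(\beta)$, which is the desired conclusion.

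The only step that requires any care is the bookkeeping around the collapsing hypotheses: checking that $\beta$, which is produced by the embedding plus predicative cut elimination, really belongs to $\mathcal H_\beta(\emptyset)$, and that the operator used for the premise of collapsing is $\mathcal H_\beta$ rather than the initial $\mathcal H_0$. Both reduce to the closure properties of nice operators and the monotonicity of the $\mathcal H_\gamma$ hierarchy, so I do not anticipate any substantive obstacle beyond this routine verification; the assumption that $\psi^+$ is $\Sigma(\Omega)$ is precisely what makes the collapsing step applicable and explains why the hypothesis appears in the statement.
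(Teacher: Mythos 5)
Your proof is correct and follows the paper's own route: embedding, then iterated applications of Exercise~\ref{ex:basic-cut-elim}(c) to reach cut rank $\Omega+1$, then collapsing. The only difference is that the paper applies Theorem~\ref{thm:collapsing} with $\alpha=0$ (and $X=\emptyset$), so that the hypothesis $\alpha\in\mathcal H_\alpha(X)$ is trivial and one obtains the slightly smaller value $\eta=\omega(\beta)$, whereas your choice $\alpha=\beta$ yields $\eta=\beta+\omega(\beta)$ and requires the extra (but correct) bookkeeping that $\beta\in\mathcal H_\beta(\emptyset)$ and that the derivation can be upgraded from $\mathcal H_0$ to $\mathcal H_\beta$.
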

\begin{proof}
Due to the embedding result given by Theorem~\ref{thm:embedding}, we get $\mathcal H_0\vdash^\beta_{\Omega+m}\psi^+$ for some $m\in\mathbb N$ and~$\beta\in\vartheta(\varepsilon_{\Omega+1})$. By iterated applications of part~(c) of the previous exercise, we may assume $m=1$ for a modified~$\beta$. To obtain the claim for~$\eta=\omega(\beta)$, it suffices to invoke the collapsing result from Theorem~\ref{thm:collapsing}.
\end{proof}

In the previous lecture~\cite{first-course}, we have seen that Peano arithmetic cannot prove that~$\varepsilon_0$ is well founded. We now obtain an analogous result for the much stronger axiom system~$\pica^-$. The following result is sharp in a suitable sense, as one can see by adapting the argument in~\cite[Section~9.6]{pohlers-proof-theory} (see also~\cite[Section~10]{rathjen-weiermann-kruskal}). This means that the Bachmann-Howard ordinal is the proof theoretic ordinal of the axiom systems $\pica^-$ and~$\id_1$ (and also of Kripke-Platek set theory).

\begin{theorem}
The fact that the Bachmann-Howard order $\{\alpha\in\vartheta(\varepsilon_{\Omega+1})\,|\,\alpha\prec\Omega\}$ is well founded cannot be proved in the axiom system $\pica^-$.
\end{theorem}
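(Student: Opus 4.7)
The plan is to suppose for contradiction that $\pica^-$ proves the well-foundedness of $\prec$ on the segment $\{\alpha\in\vartheta(\varepsilon_{\Omega+1}):\alpha\prec\Omega\}$ and then manufacture an absurd inequality of the form $\alpha\prec\alpha$ for a concrete term $\alpha\prec\Omega$. First I would push the assumption into $\id_1$. Take the operator form $\varphi(x,X):=\forall y\in\mathbb N\,(y\prec x\to Xy)$ of Exercise~\ref{ex:limit-wfness-id1}. Parameter-free $\Pi^1_1$-comprehension supplies a set $Y_\varphi$ satisfying $\lf_\varphi(Y_\varphi)$, and a minimal-counterexample argument internal to $\pica^-$ then deduces $\forall x(x\prec\Omega\to x\in Y_\varphi)$: were some $x_0\prec\Omega$ not in~$Y_\varphi$, a $\prec$-minimal such~$x_0$ would satisfy $\varphi(x_0,Y_\varphi)$ by minimality and linearity of $\prec$ below~$\Omega$, and $\lf_\varphi(Y_\varphi)$ would force $x_0\in Y_\varphi$. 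Theorem~\ref{thm:pica-id-conserv} now converts this into $\id_1\vdash\forall x(x\prec\Omega\to I_\varphi x)$.

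Next I would hand the resulting theorem to the embedding-and-collapsing machinery. Its $+$-translation is $\forall x(x\prec\Omega\to I_\varphi^{\prec\Omega} x)$, a $\Sigma(\Omega)$-sentence since $I_\varphi^{\prec\Omega}$ occurs only positively. Corollary~\ref{cor:embed-plus-collapse} supplies an $\eta\in\vartheta(\varepsilon_{\Omega+1})$ with
\[
\mathcal H_\eta\vdash^{\vartheta\eta}_{\vartheta\eta}\forall x\,(x\prec\Omega\to I_\varphi^{\prec\Omega} x).
\]
Every numeral lies in $\mathcal H_\eta(\emptyset)$, so inversion of the leading $\forall$ via Exercise~\ref{ex:basic-cut-elim}(a), together with the standard analogous inversion for the binary $\lor$, produces $\mathcal H_\eta\vdash^{\vartheta\eta}_{\vartheta\eta}\neg(n\prec\Omega),\,I_\varphi^{\prec\Omega} n$ for every numeral~$n$. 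When $n$ codes a term $\delta\prec\Omega$, the true closed $\lpa$-sentence $n\prec\Omega$ admits a derivation of finite height and cut rank~$0$, and a single cut with cut formula $n\prec\Omega$ (whose rank is below $\omega\preceq\vartheta\eta$) delivers
\[
\mathcal H_\eta\vdash^{\vartheta\eta+1}_{\vartheta\eta} I_\varphi^{\prec\Omega} n.
\]

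Because $\Omega$ is represented by an $\varepsilon$-number, the term $\vartheta\eta+1$ lies $\prec\Omega$; Exercise~\ref{ex:limit-wfness-id1} therefore applies to the displayed derivation and forces $\delta\prec\vartheta\eta+1$ for every $\delta\prec\Omega$ simultaneously. Choosing the specific term $\delta:=\vartheta\eta+1$, which is itself $\prec\Omega$ and hence a legitimate instance of the preceding argument, collapses this to $\vartheta\eta+1\prec\vartheta\eta+1$, the desired contradiction. The hardest part, I anticipate, is the ordinal bookkeeping in the middle paragraph: one must verify that the inversion of the universal formula and the cut with $n\prec\Omega$ genuinely cost only a finite amount of height and do not raise the cut rank above $\vartheta\eta$, and one must secure a bound on the final height that is \emph{uniform} in $\delta$ --- without such uniformity, the diagonal choice $\delta:=\vartheta\eta+1$ that closes the argument would not be available.
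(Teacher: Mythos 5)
Your proof is correct and follows the same skeleton as the paper's: reduce to $\id_1$ via Theorem~\ref{thm:pica-id-conserv} using the accessibility operator form $\varphi(x,X)=\forall y\,(y\prec x\to Xy)$, then combine Corollary~\ref{cor:embed-plus-collapse} with Exercise~\ref{ex:limit-wfness-id1} to reach a contradiction. The one place you take a detour is the endgame: the paper extracts the single sentence $I_\varphi\Omega$ from the well-foundedness assumption (the minimal-counterexample argument applies to all $\alpha\preceq\Omega$, including $\Omega$ itself, since $\varphi(\Omega,X)$ only quantifies over $y\prec\Omega$), and then Exercise~\ref{ex:limit-wfness-id1} with $\delta=\Omega$ immediately gives $\Omega\prec\vartheta\eta\prec\Omega$. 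This renders your middle paragraph --- the $\forall$- and $\lor$-inversions, the cut with the true literal $n\prec\Omega$, the uniformity of the height bound, and the diagonal choice $\delta:=\vartheta\eta+1$ --- unnecessary; those steps are all legitimate (the $\lor$-inversion you invoke is standard and preserves height, cut rank and operator control), but deriving $I_\varphi\Omega$ rather than $\forall x\,(x\prec\Omega\to I_\varphi x)$ lets you skip them entirely.
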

\begin{proof}
By the previous corollary and Exercise~\ref{ex:limit-wfness-id1} we get
\begin{equation*}
\id_1\nvdash I_\varphi\Omega\quad\text{for}\quad\varphi(x,X)=\forall y\in\mathbb N\,(y\prec x\to Xy),
\end{equation*}
as any collapsing value~$\vartheta\eta$ lies below~$\Omega$. We would like to conclude by the conservativity result from Theorem~\ref{thm:pica-id-conserv}. For this purpose we recall the notation
\begin{align*}
\cl_\varphi(X)&=\forall x\in\mathbb N\,(\varphi(x,X)\to x\in X),\\
\lf_\varphi(Y)&=\forall x\in\mathbb N\,\big(x\in Y\leftrightarrow\forall X\subseteq\mathbb N\,(\cl_\varphi(X)\to x\in X)\big).
\end{align*}
Aiming at a contradiction, we assume that $\pica^-$ proves that $\prec$ is well founded on~$\vartheta(\varepsilon_{\Omega+1})\cap\Omega$. We then get $\pica^-+\lf_\varphi(Y_\varphi)\vdash\Omega\in Y_\varphi$. Indeed, given $\cl_\varphi(X)$ we obtain $\alpha\in X$ for any $\alpha\preceq\Omega$, as a minimal counterexample would satisfy $\varphi(\alpha,X)$ but $\alpha\notin X$. Now we can indeed use Theorem~\ref{thm:pica-id-conserv} to conclude $\id_1\vdash I_\varphi\Omega$.
\end{proof}

Let us now derive the independence result that was discussed in Remark~\ref{rmk:unprovability}.

\begin{corollary}\label{cor:EKT-unprovable}
The axiom system $\pica^-$ of parameter-free $\Pi^1_1$-comprehension cannot prove that $T_2(\emptyset)$ is a well partial order (i.\,e., it cannot prove Harvey Friedman's extended Kruskal theorem for trees with gap condition, even for two labels).
\end{corollary}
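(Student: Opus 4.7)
The strategy is to contrapose: assuming $\pica^-$ proves that $T_2(\emptyset)$ is a well partial order, I will deduce that $\pica^-$ proves the well foundedness of the Bachmann-Howard order, contradicting the theorem that immediately precedes this corollary. The bridge is the order-reflecting map $f:\vartheta(\varepsilon_{\Omega+1})\to T_2(\emptyset)$ from Definition~\ref{def:emb-T2}, together with Theorem~\ref{thm:emb-T2}.

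In detail, I would first observe that the definition of $f$ is a primitive recursion over terms and that terms in $\vartheta(\varepsilon_{\Omega+1})$ together with the relation $\prec$ are all arithmetically (in fact recursively) definable, so $f$ is available already in $\rca_0$. Second, the statement of Theorem~\ref{thm:emb-T2}, namely $\forall\alpha,\beta\,(f(\alpha)\gap f(\beta)\to\alpha\preceq\beta)$, is arithmetical, and its proof proceeds by induction on the natural-number quantity $l(\alpha)+l(\beta)$; hence it is a theorem of $\rca_0$, and in particular of $\pica^-$.

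Now argue in $\pica^-$ under the assumed wpo-hypothesis for $T_2(\emptyset)$. Given any infinite sequence $\alpha_0,\alpha_1,\ldots$ of terms in $\vartheta(\varepsilon_{\Omega+1})$ with every $\alpha_i\prec\Omega$, form the induced sequence $f(\alpha_0),f(\alpha_1),\ldots$ in $T_2(\emptyset)$ by arithmetical comprehension. The wpo assumption provides indices $i<j$ with $f(\alpha_i)\gap f(\alpha_j)$, and Theorem~\ref{thm:emb-T2} then yields $\alpha_i\preceq\alpha_j$. Thus the sequence cannot be strictly descending, which shows that the Bachmann-Howard order $\{\alpha\in\vartheta(\varepsilon_{\Omega+1})\,|\,\alpha\prec\Omega\}$ is well founded. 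This contradicts the preceding theorem, completing the proof.

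I do not anticipate any serious obstacle: the two ingredients (formalisation of $f$ and of Theorem~\ref{thm:emb-T2}) are both arithmetical, and the final step is a one-line application of the order-reflection property, exactly as in Corollary~\ref{cor:bachmann-howard-wo}. The only subtlety worth flagging is that one should restrict attention to terms below $\Omega$ when invoking the previous theorem, which is automatic because the well-foundedness statement refuted there is precisely for $\vartheta(\varepsilon_{\Omega+1})\cap\Omega$.
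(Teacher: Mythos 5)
Your proposal is correct and follows exactly the paper's route: the paper likewise deduces the corollary from the preceding unprovability theorem for the Bachmann--Howard order by noting that the wpo-statement for $T_2(\emptyset)$ entails well foundedness of $\vartheta(\varepsilon_{\Omega+1})$ via the argument of Corollary~\ref{cor:bachmann-howard-wo}, i.\,e., via the order-reflecting map $f$ and Theorem~\ref{thm:emb-T2}. Your additional remarks on formalising $f$ and Theorem~\ref{thm:emb-T2} in a weak base theory merely spell out what the paper leaves implicit.
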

\begin{proof}
This follows from the previous theorem, as the statement that $T_2(\emptyset)$ is a well partial order entails that $\vartheta(\varepsilon_{\Omega+1})$ is well founded, by the proof of Corollary~\ref{cor:bachmann-howard-wo}.
\end{proof}

We continue with a set theoretic motivation of the Veblen functions, even though the latter will officially be defined in syntactic terms. Let us temporarily write~$\Omega$ for the collection of all countable ordinals. A function $f:\Omega\to\Omega$ is called normal if it is strictly increasing and continuous at limits, i.\,e., if we have
\begin{equation*}
f(\lambda)=\sup\{f(\alpha)\,|\,\alpha<\lambda\}\quad\text{for any limit }\lambda.
\end{equation*}
Equivalently, $f$ is the increasing enumeration of a subset $\rng(f)\subseteq\Omega$ that is closed an unbounded (short: club) in the following sense:
\begin{enumerate}[label=(\roman*)]
\item for each $\alpha<\Omega$ there is a $\beta\in\rng(f)$ with $\alpha<\beta$,
\item if each $\alpha<\gamma$ admits a $\beta\in\rng(f)$ with $\alpha<\beta<\gamma$, then we have $\gamma\in\rng(f)$.
\end{enumerate}
An important example of a normal function is given by
\begin{equation*}
\varphi(0,-):\Omega\to\Omega\quad\text{with}\quad\varphi(0,\beta):=\omega^\beta.
\end{equation*}
In the previous lecture~\cite{first-course} we have encountered the first fixed point~$\varepsilon_0$ of this function. More generally, the set of fixed points of any normal function is club. Given a normal function $\varphi(\alpha,-)$, we thus get another normal function
\begin{equation*}
\varphi(\alpha+1,-):=\text{``the increasing enumeration of $\{\gamma<\Omega\,|\,\varphi(\alpha,\gamma)=\gamma\}$"}.
\end{equation*}
The intersection of countably many clubs is itself club. For a limit~$\lambda<\Omega$ we put
\begin{equation*}
\varphi(\lambda,-):=\text{``the increasing enumeration of $\{\gamma<\Omega\,|\,\varphi(\alpha,\gamma)=\gamma\text{ for all }\alpha<\lambda\}$"}.
\end{equation*}
This hierarchy of normal functions $\varphi(\alpha,-):\Omega\to\Omega$ for~$\alpha<\Omega$ goes back to work of Oswald Veblen~\cite{Veblen1908} and is thus called Veblen hierarchy. It is crucial for predicative ordinal analysis and also interesting from the viewpoint of computability theory (see~\cite[Section~5.2]{rathjen-sieg-stanford} and~\cite{marcone-montalban}). By construction we have
\begin{equation*}
\varphi(\alpha,\varphi(\beta,\gamma))=\varphi(\beta,\gamma)\quad\text{for}\quad\alpha<\beta.
\end{equation*}
As $\varphi(\alpha,-)$ is strictly increasing, we get $\beta\leq\varphi(\alpha,\beta)$ by induction on~$\beta$. The following can be derived from these observations, without further use of set theory.

\begin{exercise}\label{ex:Veblen}
Show that we have
\begin{equation*}
\varphi(\alpha,\beta)<\varphi(\alpha',\beta')\quad\Leftrightarrow\quad\begin{cases}
\text{either }\alpha<\alpha'\text{ and }\beta<\varphi(\alpha',\beta'),\\
\text{or }\alpha=\alpha'\text{ and }\beta<\beta',\\
\text{or }\alpha'<\alpha\text{ and }\varphi(\alpha,\beta)<\beta'.
\end{cases}
\end{equation*}
\emph{Hint:} For both directions, start with (the negation of) the right side.
\end{exercise}

In order to recover the Veblen hierarchy in our syntactic setting, we first declare that $\Omega\cdot\alpha$ for $\alpha\in\vartheta(\varepsilon_{\Omega+1})$ is given by
\begin{equation*}
\Omega\cdot\langle\alpha_0,\ldots,\alpha_{n-1}\rangle:=\langle\Omega+\alpha_0,\ldots,\Omega+\alpha_{n-1}\rangle,
\end{equation*}
where we identify~$\vartheta\beta$ and $\Omega$ with $\langle\vartheta\beta\rangle$ and $\langle\Omega\rangle$, respectively. This is motivated by the idea that $\Omega=\omega^\Omega$ is an $\varepsilon$-number, so that the usual ordinal arithmetic yields
\begin{equation*}
\Omega\cdot\left(\omega^{\alpha_0}+\ldots+\omega^{\alpha_{n-1}}\right)=\omega^{\Omega+\alpha_0}+\ldots+\omega^{\Omega+\alpha_{n-1}}.
\end{equation*}
One readily checks that $\alpha\prec\beta$ entails $\Omega\cdot\alpha\prec\Omega\cdot\beta$. We will abbreviate~$\Omega^2:=\Omega\cdot\Omega$. The following definition does not quite give the Veblen hierarchy from above. In particular, Exercise~\ref{ex:Bachmann-Howard} yields $\beta\prec\vartheta(\Omega\cdot\alpha+\beta)$ for $\beta\prec\Omega$, so that we do not get any fixed points. Nevertheless, we will see that the crucial part of Exercise~\ref{ex:Veblen} is preserved. The precise relation with the original Veblen hierarchy is discussed~in~\cite{buchholz-bachmann-howard}.

\begin{definition}
For $\alpha,\beta\prec\Omega$ in $\vartheta(\varepsilon_{\Omega+1})$, we set $\overline\varphi(\alpha,\beta):=\vartheta(\Omega\cdot\alpha+\beta)\prec\Omega$.
\end{definition}

As promised, we recover the parts of Exercise~\ref{ex:Veblen} that will be needed below. The reader is invited to consider the remaining parts of the exercise as well.

\begin{lemma}\label{lem:Veblen}
The following holds for any $\alpha,\alpha',\beta,\beta'\prec\Omega$:

(a) We have $\overline\varphi(\alpha,\beta)\prec\overline\varphi(\alpha,\beta')$ for~$\beta\prec\beta'$.

(b) Given $\alpha\prec\alpha'$ and $\beta\prec\overline\varphi(\alpha',\beta')$, we get $\overline\varphi(\alpha,\beta)\prec\overline\varphi(\alpha',\beta')$.
\end{lemma}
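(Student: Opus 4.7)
My plan is to apply the criterion for $\vartheta\xi \prec \vartheta\xi'$ from Exercise~\ref{ex:Bachmann-Howard}(a) and, in both parts, verify its first disjunct: the pair of conditions $\xi \prec \xi'$ and $E(\xi) \prec^* \vartheta\xi'$. Here I set $\xi := \Omega\cdot\alpha + \beta$ in either case, while $\xi' := \Omega\cdot\alpha + \beta'$ in~(a) and $\xi' := \Omega\cdot\alpha' + \beta'$ in~(b). The second disjunct $\vartheta\xi \preceq^* E(\xi')$ will not be needed.

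The key preparatory step is the identity $E(\Omega\cdot\gamma + \delta) = E(\gamma) \cup E(\delta)$ for $\gamma, \delta \prec \Omega$. I would unfold the definition of $\Omega\cdot\gamma$ to see that its entries are of the form $\Omega + \gamma_i$ with $E(\Omega + \gamma_i) = E(\Omega) \cup E(\gamma_i) = E(\gamma_i)$; the syntactic concatenation with the entries of $\delta$ (all of which are $\prec \Omega$ and hence lie to the right of the leading $\Omega$-sized atoms in the resulting Cantor normal form) then yields the identity via the defining clause for $E$ on tuples. I also plan to record that $\alpha \prec \alpha'$ implies $\Omega\cdot\alpha \prec \Omega\cdot\alpha'$ (already noted before the definition of $\overline\varphi$) and, in fact, $\xi \prec \xi'$ in~(b), since the trailing $\beta, \beta' \prec \Omega$ summands cannot disturb the lexicographic comparison between the $\Omega$-sized leading entries.

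With these preliminaries, the verification becomes a short assembly. In~(a), $\xi \prec \xi'$ is immediate from $\beta \prec \beta'$, and to show $E(\xi) \prec^* \vartheta\xi'$ I split $E(\xi) = E(\alpha) \cup E(\beta)$. Applying Exercise~\ref{ex:Bachmann-Howard}(a) to $\xi'$ yields $E(\xi') \prec^* \vartheta\xi'$, which already covers $E(\alpha) \subseteq E(\xi')$. For $E(\beta)$, Exercise~\ref{ex:Bachmann-Howard}(c) applied to $\beta' \prec \Omega$ turns $E(\beta') \prec^* \vartheta\xi'$ into $\beta' \prec \vartheta\xi'$; transitivity gives $\beta \prec \vartheta\xi'$, and a second appeal to~(c) (now for $\beta \prec \Omega$) produces $E(\beta) \prec^* \vartheta\xi'$. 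Part~(b) proceeds in parallel: $E(\alpha) \prec^* \vartheta\xi'$ comes from chaining $\alpha \prec \alpha' \prec \vartheta\xi'$ through Exercise~\ref{ex:Bachmann-Howard}(c), and $E(\beta) \prec^* \vartheta\xi'$ follows directly from the hypothesis $\beta \prec \vartheta\xi'$ together with $\beta \prec \Omega$ via~(c).

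The main obstacle I foresee is the bookkeeping behind the identity for $E(\Omega\cdot\gamma + \delta)$, which needs careful attention to how $\Omega\cdot\gamma$ is written as a tuple whose entries are themselves sums $\Omega + \gamma_i$, and to whether the $\gamma_i$ are atoms of $\vartheta$-form or nested tuples. Beyond this piece of pure notation, the proof is essentially just feeding inequalities into parts~(a) and~(c) of Exercise~\ref{ex:Bachmann-Howard}.
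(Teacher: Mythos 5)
Your proposal is correct and follows essentially the same route as the paper: verify the first disjunct of the comparison criterion from Exercise~\ref{ex:Bachmann-Howard}(a), using the identity $E(\Omega\cdot\gamma+\delta)=E(\gamma)\cup E(\delta)$ for $\gamma,\delta\prec\Omega$ and the observation that appending summands below $\Omega$ cannot affect the comparison of $\Omega\cdot\alpha$ with $\Omega\cdot\alpha'$. The only cosmetic difference is that in part~(a) you handle $E(\beta)$ by two applications of Exercise~\ref{ex:Bachmann-Howard}(c) (passing through $\beta\prec\beta'\prec\vartheta\xi'$), where the paper instead invokes part~(e) of that exercise to dominate $E(\beta)$ by $E(\beta')$; both are valid.
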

\begin{proof}
(a) Due to Exercise~\ref{ex:Bachmann-Howard}(a) we obtain
\begin{equation*}
E(\alpha)\cup E(\beta')=E(\Omega\cdot\alpha+\beta')\prec^*\vartheta(\Omega\cdot\alpha+\beta').
\end{equation*}
Concerning the equality, we note that the inclusion~$\subseteq$ is readily verified for~$\beta'\prec\Omega$, while it may fail without this condition (e.\,g., since we have $\Omega\cdot\vartheta\alpha'+\Omega^2=\Omega^2$). Given $\beta\prec\beta'\prec\Omega$, we can invoke part~(e) of the cited exercise to get
\begin{equation*}
E(\Omega\cdot\alpha+\beta)\prec^*\vartheta(\Omega\cdot\alpha+\beta').
\end{equation*}
As we also have $\Omega\cdot\alpha+\beta\prec\Omega\cdot\alpha+\beta'$, we may conclude by part~(a) of the exercise.

(b) The condition $\beta\prec\overline\varphi(\alpha',\beta')$ ensures $E(\beta)\prec^*\vartheta(\Omega\cdot\alpha'+\beta')$, by Exercise~\ref{ex:Bachmann-Howard}(c). Let us also note that $\alpha\prec\alpha'$ entails $\Omega\cdot\alpha+\beta\prec\Omega\cdot\alpha'+\beta'$ for any~$\beta,\beta'\prec\Omega$. We can conclude by a similar argument as above.
\end{proof}

In our setting, the central ingredient for the ordinal analysis of predicative axiom systems can now be given as follows.

\begin{theorem}[`Predicative cut elimination']\label{thm:pred-cut-elim}
If we have $\alpha,\beta,\rho\prec\Omega$ and \mbox{$\alpha\in\mathcal H_\eta[X]$} with $\Omega^2\preceq\eta$, then we get
\begin{equation*}
\mathcal H_\eta[X]\vdash^\beta_{\rho+\omega(\alpha)}\Gamma\quad\Rightarrow\quad\mathcal H_\eta[X]\vdash^{\overline\varphi(\alpha,\beta)}_{\rho}\Gamma.
\end{equation*}
\end{theorem}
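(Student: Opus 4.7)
The plan is to proceed by main induction on~$\alpha$ with a subsidiary induction on~$\beta$, echoing the predicative cut elimination schema from the first part of the lecture. Closure of $\mathcal H_\eta[X]$ under the ordinal operations that appear is routine via Exercise~\ref{ex:operators}(e) and Proposition~\ref{prop:H-basic}(b), starting from the initial condition $\{\beta\}\cup k(\Gamma)\subseteq\mathcal H_\eta[X](\emptyset)$ furnished by the derivation. In the base case $\alpha=0$, where $\omega(\alpha)=1$ and $\overline\varphi(0,\beta)=\vartheta\beta$, the hypothesis reads $\mathcal H_\eta[X]\vdash^{\beta}_{\rho+1}\Gamma$, and Exercise~\ref{ex:basic-cut-elim}(c) (available since $\rho\prec\Omega$) yields $\mathcal H_\eta[X]\vdash^{\omega(\beta)}_{\rho}\Gamma$. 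Combining Exercise~\ref{ex:Bachmann-Howard}(a) and~(c) with $\beta\prec\Omega$ gives $\beta\prec\vartheta\beta$; since $\omega\cdot\vartheta\beta=\vartheta\beta$ makes $\vartheta\beta$ an $\varepsilon$-number, we also obtain $\omega(\beta)\preceq\vartheta\beta$, and a weakening via Exercise~\ref{ex:weakening}(b) concludes the case.

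In the inductive step $\alpha\succ 0$, I would perform a secondary induction on $\beta$ with a case split on the last rule of $\mathcal H_\eta[X]\vdash^{\beta}_{\rho+\omega(\alpha)}\Gamma$. The routine cases---the last rule being $(\bigvee)$, $(\bigwedge)$, or a cut whose formula has rank $\prec\rho$---are handled by applying the secondary IH to each premise (at some $\beta'\prec\beta$), producing heights $\overline\varphi(\alpha,\beta')$ at cut rank $\rho$, and then reapplying the rule at the new height. Lemma~\ref{lem:Veblen}(a) supplies the required strict inequality $\overline\varphi(\alpha,\beta')\prec\overline\varphi(\alpha,\beta)$, and for a $(\bigvee)$ premise the side condition $\gamma\prec\overline\varphi(\alpha,\beta)$ follows from $\gamma\prec\beta\prec\vartheta\beta\preceq\overline\varphi(\alpha,\beta)$. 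Clause $(\mathsf{Fix})$ is excluded by $\beta\prec\Omega$.

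The substantive case is when the last rule is a cut of a formula $\psi$ of rank $\sigma$ with $\rho\preceq\sigma\prec\rho+\omega(\alpha)$ and premises at some height $\beta_0\prec\beta$. Applying the secondary IH to both premises reduces them to height $\overline\varphi(\alpha,\beta_0)$ at cut rank $\rho$; re-performing the cut at rank $\sigma+1$ then gives $\mathcal H_\eta[X]\vdash^{\overline\varphi(\alpha,\beta_0)+1}_{\sigma+1}\Gamma$. Absorbing this fresh cut is the heart of the proof: one matches the additive distance of $\sigma+1$ from $\rho$ against a finite iteration of Exercise~\ref{ex:basic-cut-elim}(c) interleaved with applications of the main IH at suitable $\alpha_0\prec\alpha$. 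Because $\overline\varphi(\alpha,\beta)$ is an $\varepsilon$-number lying strictly above $\overline\varphi(\alpha,\beta_0)+1$, any finitely iterated $\omega(\cdot)$-composition stays below it; and Lemma~\ref{lem:Veblen}(b) ensures that every main-IH step, producing a height of the form $\overline\varphi(\alpha_0,h)$ with $\alpha_0\prec\alpha$ and $h\prec\overline\varphi(\alpha,\beta)$, likewise remains below $\overline\varphi(\alpha,\beta)$. The main obstacle is exactly this bookkeeping: since no single $\alpha_0\prec\alpha$ is large enough to absorb $\sigma+1$ in one step (even for $\alpha=1$ one needs several applications of Exercise~\ref{ex:basic-cut-elim}(c)), one must show that the finitely many successive steps---whose number depends on the derivation---collectively stay beneath $\overline\varphi(\alpha,\beta)$, and this is where the $\varepsilon$-number property of the Bachmann--Howard notations together with Lemma~\ref{lem:Veblen} earn their keep.
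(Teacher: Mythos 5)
Your proposal follows the paper's proof essentially step for step: main induction on $\alpha$ with side induction on $\beta$, routine reapplication of rules in the non-cut cases, and in the cut case a re-performed cut at rank $\rk(\psi)+1$ whose excess over $\rho$ is absorbed by finitely many applications of the main induction hypothesis (at the exponents $\alpha_i\prec\alpha$ in the Cantor normal form of $\rk(\psi)$ over $\rho$), with Lemma~\ref{lem:Veblen}(a) and~(b) yielding exactly the bound $\prec\overline\varphi(\alpha,\beta)$ you describe. The only details the paper makes explicit that you leave implicit are the decomposition $\rk(\psi)=\rho+\omega(\alpha_0)+\ldots+\omega(\alpha_n)$ with $\alpha_n\preceq\ldots\preceq\alpha_0\prec\alpha$ and the verification $\alpha_i\in\mathcal H_\eta(X)$ needed to invoke the main induction hypothesis, both of which are routine.
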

\begin{proof}
We argue by main induction on~$\alpha$ and side induction on~$\beta$. Let us first note that the assumptions entail $\alpha,\beta\in\mathcal H_\eta[X]$. In view of $E(\Omega\cdot\alpha+\beta)=E(\alpha)\cup E(\beta)$ and $\Omega\cdot\alpha+\beta\prec\Omega^2\preceq\eta$, we can invoke Exercise~\ref{ex:operators} and Proposition~\ref{prop:H-basic} to get
\begin{equation*}
\overline\varphi(\alpha,\beta)=\vartheta(\Omega\cdot\alpha+\beta)\in\mathcal H_\eta(X),
\end{equation*} 
as required by the initial condition from Definition~\ref{def:H-controlled-proofs}. We now distinguish cases according to the clauses from this definition. In the crucial case of clause~($\mathsf{Cut})$, we first use the side induction hypothesis to obtain
\begin{equation*}
\mathcal H_\eta[X]\vdash^{\overline\varphi(\alpha,\beta')}_{\rho}\Gamma,\psi\quad\text{and}\quad\mathcal H_\eta[X]\vdash^{\overline\varphi(\alpha,\beta')}_{\rho}\Gamma,\neg\psi
\end{equation*}
with~$\beta'\prec\beta$ and $\rk(\psi)\prec\rho+\omega(\alpha)$. If we have $\rk(\psi)\preceq\rho$, then we can conclude by a cut or the reduction result from Exercise~\ref{ex:basic-cut-elim}(b), i.\,e., by the basic cut elimination method from the previous lecture~\cite{first-course}. Otherwise, we may write
\begin{equation*}
\rk(\psi)=\rho+\langle\alpha_0,\ldots,\alpha_n\rangle=\rho+\omega(\alpha_0)+\ldots+\omega(\alpha_n)\quad\text{with}\quad\alpha_n\preceq\ldots\preceq\alpha_0\prec\alpha.
\end{equation*}
Let us observe that $\rk(\psi)\in\mathcal H_\eta(k(\psi))\subseteq\mathcal H_\eta(X)$ holds by Exercise~\ref{ex:operators} and the initial condition from Definition~\ref{def:H-controlled-proofs}. In view of $E(\alpha_i)\subseteq E(\rk(\psi))$ we get $\alpha_i\in\mathcal H_\eta(X)$. By an application of~($\mathsf{Cut}$), we derive
\begin{equation*}
\mathcal H_\eta[X]\vdash^{\overline\varphi(\alpha,\beta')+1}_{\rho'}\Gamma\quad\text{for}\quad\rho'=\rk(\psi)+1=\rho+\omega(\alpha_0)+\ldots+\omega(\alpha_n)+\omega(0).
\end{equation*}
Iterated applications of the main induction hypothesis yield
\begin{equation*}
\mathcal H_\eta[X]\vdash^{\beta_{n+1}}_\rho\Gamma\quad\text{with}\quad\beta_0:=\overline\varphi(0,\overline\varphi(\alpha,\beta')+1)\text{ and }\beta_{i+1}:=\overline\varphi(\alpha_{n-i},\beta_i).
\end{equation*}
We get $\overline\varphi(\alpha,\beta')+1\prec\overline\varphi(\alpha,\beta)$ by part~(a) of Lemma~\ref{lem:Veblen}. Using part~(b) of the latter, we inductively obtain $\beta_i\prec\overline\varphi(\alpha,\beta)$, so that we can conclude by weakening. In the remaining cases of clause~($\bigwedge$) and ($\bigvee$) from Definition~\ref{def:H-controlled-proofs}, it is straightforward to reduce to the induction hypothesis.
\end{proof}

Let us note that predicative cut elimination and collapsing become intertwined in the ordinal analysis of impredicative axiom systems beyond~$\id_1$. For these, a previous collapsing step may produce a cut rank of the form $\Omega+\rho$ with~$\omega\preceq\rho$. By a variant of predicative cut elimination, one can then reach cut rank $\Omega+1$, which allows a next collapsing step akin to Theorem~\ref{thm:collapsing}. To see a precise argument that involves these ideas, one may wish to consider the proof of Theorem~4.8 in~\cite{buchholz-local-predicativity}. Here we use predicative cut elimination to deduce the following strengthening of Corollary~\ref{cor:embed-plus-collapse}, which is relevant for more refined independence results and formalized consistency proofs (see in particular Theorem~5.16 of~\cite{simpson85}).

\begin{corollary}
If $\psi^+$ is a $\Sigma(\Omega)$-sentence, then we get
\begin{equation*}
\id_1\vdash\psi\qquad\Rightarrow\qquad\mathcal H_\alpha\vdash^\beta_0\psi^+\quad\text{for some }\alpha,\beta\in\vartheta(\varepsilon_{\Omega+1})\text{ with }\beta\prec\Omega.
\end{equation*}
\end{corollary}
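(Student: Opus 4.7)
The plan is to combine Corollary \ref{cor:embed-plus-collapse} with a single application of predicative cut elimination (Theorem \ref{thm:pred-cut-elim}). From the assumption $\id_1 \vdash \psi$ together with the hypothesis that $\psi^+$ is $\Sigma(\Omega)$, Corollary \ref{cor:embed-plus-collapse} supplies an $\eta_0 \in \vartheta(\varepsilon_{\Omega+1})$ with $\mathcal H_{\eta_0} \vdash^{\vartheta\eta_0}_{\vartheta\eta_0} \psi^+$. Both the proof height and the cut rank already lie below $\Omega$ (since $\vartheta$ takes values in $\vartheta(\varepsilon_{\Omega+1}) \cap \Omega$), so the only remaining task is to push the cut rank down from $\vartheta\eta_0$ to $0$, while keeping the height below $\Omega$.

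To invoke Theorem \ref{thm:pred-cut-elim}, I would first enlarge the operator index to some $\eta$ with $\eta_0 \prec \eta$ and $\Omega^2 \preceq \eta$, for instance $\eta := \eta_0 + \Omega^2 + 1$. Proposition \ref{prop:H-basic}(a) yields $\mathcal H_{\eta_0}(Y) \subseteq \mathcal H_\eta(Y)$ for all $Y$, so Exercise \ref{ex:weakening}(a) promotes the derivation to $\mathcal H_\eta \vdash^{\vartheta\eta_0}_{\vartheta\eta_0} \psi^+$. Now apply Theorem \ref{thm:pred-cut-elim} with $X := \emptyset$, $\alpha := \vartheta\eta_0$, $\beta := \vartheta\eta_0$ and $\rho := 0$. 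The crucial point is that $\vartheta\eta_0$ has the syntactic form $\vartheta(\cdot)$, so the definition of the $\omega$ function forces $\omega(\vartheta\eta_0) = \vartheta\eta_0$; consequently $\rho + \omega(\alpha) = \vartheta\eta_0$ matches the cut rank of our derivation. The side conditions are immediate: $\alpha, \beta, \rho \prec \Omega$ by construction, $\Omega^2 \preceq \eta$ by choice, and $\vartheta\eta_0 \in \mathcal H_\eta(\emptyset)$ is part of the initial condition of Definition \ref{def:H-controlled-proofs} for the above derivation (using $\mathcal H_\eta[\emptyset] = \mathcal H_\eta$, which follows from Exercise \ref{ex:operators}(c)).

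The theorem then yields $\mathcal H_\eta \vdash^{\overline\varphi(\vartheta\eta_0, \vartheta\eta_0)}_0 \psi^+$, and the new height $\overline\varphi(\vartheta\eta_0, \vartheta\eta_0) = \vartheta(\Omega \cdot \vartheta\eta_0 + \vartheta\eta_0)$ is automatically below $\Omega$, since it is itself a value of $\vartheta$. Setting $\alpha := \eta$ and $\beta := \overline\varphi(\vartheta\eta_0, \vartheta\eta_0)$ in the statement completes the proof. There is no serious obstacle here: the identity $\omega(\vartheta\gamma) = \vartheta\gamma$ is precisely what makes a single step of predicative cut elimination suffice, with no need to iterate. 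The only administrative issue is arranging $\Omega^2 \preceq \eta$ so that Theorem \ref{thm:pred-cut-elim} applies, and this is achieved by a harmless weakening of the operator.
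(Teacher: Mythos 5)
Your proposal is correct and follows essentially the same route as the paper: apply Corollary~\ref{cor:embed-plus-collapse}, weaken the operator index to some $\eta$ with $\Omega^2\preceq\eta$, and then use Theorem~\ref{thm:pred-cut-elim} once with $\rho=0$ and $\alpha=\beta=\vartheta\eta_0$, exploiting $\omega(\vartheta\eta_0)=\vartheta\eta_0$ to land at height $\overline\varphi(\vartheta\eta_0,\vartheta\eta_0)\prec\Omega$ and cut rank~$0$. The side conditions are checked correctly, so nothing further is needed.
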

\begin{proof}
By Corollary~\ref{cor:embed-plus-collapse} we get $\mathcal H_\eta\vdash^{\vartheta\eta}_{\vartheta\eta}\psi^+$ for some~$\eta$. Weakening allows us to replace~$\mathcal H_\eta$ by some~$\mathcal H_\alpha$ with $\Omega^2,\eta\preceq\alpha$. To obtain the claim for~$\beta:=\overline\varphi(\vartheta\eta,\vartheta\eta)$, it suffices to invoke the predicative cut elimination result from Theorem~\ref{thm:pred-cut-elim}.
\end{proof}

We hope that the present and previous course~\cite{first-course} have conveyed some of the fascinating ideas and applications of ordinal analysis. Once again, we stress that we have focused on a few selected topics in order to give full technical details. To complement these by a broader picture of the field, we strongly recommend to look at the survey papers by Michael Rathjen~\cite{rathjen-realm} and by Rathjen and Wilfried Sieg~\cite{rathjen-sieg-stanford}.

\bibliographystyle{amsplain}
\bibliography{Lecture_Adv-Ord-Ana}

\end{document}